\documentclass[12pt]{article}

\usepackage[utf8]{inputenc}
\usepackage[left=2cm, right=2cm,, top=3cm, centering]{geometry}
\usepackage{hyperref}
\usepackage{amsmath}
\usepackage{amsfonts}
\usepackage{amssymb}
\usepackage{amsthm}
\usepackage{mathtools}
\usepackage{stmaryrd} \usepackage[
	scr=boondoxo,
]{mathalpha}
\usepackage{mathabx}
\usepackage{graphicx}
\usepackage[dvipsnames]{xcolor}
\usepackage{enumitem}
\usepackage{ulem}
\usepackage{aligned-overset}
\usepackage{mdframed}
\usepackage{cancel}
\usepackage{placeins}

\usepackage{tikz}

\usepackage{natbib}

\usepackage[draft]{fixme}
\fxsetup{theme=color}
\allowdisplaybreaks[4]

\usepackage{autonum}

\usepackage{hyperref}
\hypersetup{
	colorlinks=true,       linkcolor=blue,         
	citecolor=red}

\newcommand*{\feature}{\phi}

\newcommand*{\flow}{\psi}

\newcommand{\ul}[1]{\underline{#1}}
\newcommand{\ol}[1]{\overline{#1}}

\newcommand*{\transpose}{T}

\DeclarePairedDelimiter{\scp}{\langle}{\rangle}
\DeclarePairedDelimiter{\norm}{\|}{\|}
\DeclarePairedDelimiter{\holder}{[}{]}
\DeclarePairedDelimiter{\set}{\{}{\}}
\DeclarePairedDelimiter{\abs}{|}{|}
\DeclarePairedDelimiter{\floor}{\lfloor}{\rfloor}
\DeclarePairedDelimiter{\ceil}{\lceil}{\rceil}

\makeatletter
\newcounter{constants}
 \newcommand*{\defConst}[1]{\stepcounter{constants}\edef\const@temp{\arabic{constants}}\hypertarget{const:#1}{}\protected@write\@auxout{}{\string\newlabel{const: #1}{{\noexpand\ensuremath{c_{\const@temp}}}{\thepage}{\noexpand\ensuremath{c_{\const@temp}}}{const:#1}{}}}\ensuremath{c_{\const@temp}}}
\makeatother

\newcommand*{\dims}{d}
\DeclareMathOperator*{\argmin}{arg\,min}

\newcommand*{\dimIn}{{d_\mathrm{in}}}
\newcommand*{\dimOut}{{d_\mathrm{out}}}

\newcommand*{\bigO}{\mathcal{O}}

\newcommand*{\frechet}{D}

\newcommand*{\resnet}{F}

\newcommand*{\hDiscr}{h}
\newcommand*{\hInterp}{\ol{h}}
\newcommand*{\hCont}{\mathscr{h}}
\newcommand*{\wZh}{\hat{h}}
\newcommand*{\diffusion}{\sigma}

\newcommand*{\cw}{\mathscr{w}}
\newcommand*{\iw}{\ol{w}}

\newcommand*{\cv}{\mathscr{z}^{\hermRank, \hurst}}

\newcommand*{\iv}{\ol{z}}

\newcommand{\R}{\mathbb{R}}
\newcommand*{\real}{\mathbb{R}}

\newcommand{\nat}{\mathbb{N}}

\newcommand*{\banachSpace}[1][E]{\mathcal{#1}}
\newcommand*{\linOp}[2]{\mathcal{L}(#1, #2)}

\newcommand{\E}{\mathsf{E}}

\DeclarePairedDelimiterXPP{\Exp}[1]{\E}{[}{]}{}{#1}
\DeclareMathOperator{\var}{Var}

\newcommand*{\hermRank}{q}
\newcommand*{\bm}{B}

\newcommand*{\ind}{\mathbf{1}}
\newcommand*{\id}{\mathbb{I}}
\DeclarePairedDelimiterXPP{\vect}[1]{\mathrm{vec}}{(}{)}{}{#1}
\DeclarePairedDelimiterXPP{\prob}[1]{\mathsf{P}}{\{}{\}}{}{#1}

\newcommand*{\lip}{\mathrm{Lip}}
\newcommand*{\slowVar}{\ell}

\newcommand*{\layer}{l}
\newcommand*{\Layer}{L}
\newcommand*{\hurst}{H}

\newcommand*{\magenta}[1]{#1}
\newcommand*{\red}[1]{#1}
\newcommand*{\violet}[1]{#1}
\newcommand{\teal}[1]{#1}

\newlist{steps}{enumerate}{1}
\setlist[steps]{
    label=\textbf{Step \arabic*:},
    ref={Step \arabic*},
    wide=0pt,
}
\newlist{casebycase}{enumerate}{1}
\setlist[casebycase]{
    label=\textbf{Case \arabic*},
    ref={\arabic*},
    wide=0pt,
}
 
\theoremstyle{plain}
\newtheorem{theorem}{Theorem}[section]
\newtheorem*{theorem*}{Theorem}

\newtheorem*{proposition*}{Proposition}
\newtheorem{lemma}[theorem]{Lemma}
\newtheorem{corollary}[theorem]{Corollary}

\theoremstyle{definition}
\newtheorem{definition}[theorem]{Definition}
\newtheorem*{definition*}{Definition}
\newtheorem{remark}[theorem]{Remark}
\newtheorem*{remark*}{Remark}
\newtheorem{example}[theorem]{Example}
\newtheorem{assumption}[theorem]{Assumption}

\title{
Correlated initialization of deep residual networks
}
\author{
	Felix Benning
	\and Ivan Nourdin
	\and Giovanni Peccati
}
\date{\today}

\begin{document}

\maketitle

\begin{abstract}

We study the large-depth behavior of residual networks whose weights are
correlated across layers at initialization. Our results confirm and extend a
conjecture of \citet{marionScalingResNetsLargedepth2025}, according to which
correlated initializations should interpolate continuously between the
Brownian stochastic differential equation arising from independent
initialization and the ordinary differential equation arising from perfectly
correlated initialization.

    When the initialization is obtained from the application of a feature function to a
    stationary Gaussian sequence with regularly varying correlation, we prove
    that there exists a unique critical scaling such that the infinite-depth limit
    is the solution of a Young differential equation driven by a Hermite process.
    Hermite processes reduce to the fractional Brownian motion if the feature function
    generating the initialization has Hermite rank one, which is the case for
    the identity function, for example.  We show that the critical scaling and asymptotic
    limit are uniquely determined by the decay of correlations together with the
    Hermite rank of the feature function. Consequently, the correlation
    structure and Hermite rank of the initialization represent meaningful
    hyperparameters in the asymptotic regime. By contrast, under finite-variance
    iid initialization, the asymptotic driver is universally Brownian up to
    normalization regardless of the choice of distribution.

    Our proofs rely on a collection of novel results establishing a robust stability theory for Young differential equations in
    Banach spaces.
    \smallskip

\noindent{\bf Keywords:}
Residual networks,
correlated initialization,
large-depth limit,
Young differential equations,
Hermite processes,
fractional Brownian motion,
long-range dependence,
functional limit theorems

\noindent{\bf AMS Classification:}
60G18, 60G22, 60H10, 60L20, 60L90, 60F17, 68T07. 

\end{abstract}
 \section{Introduction}

{\it Residual connections} are one of the principal architectural ideas that made very
deep neural networks trainable. They have become a standard component of
modern architectures, including the {\it Transformer architecture}
\citep{vaswaniAttentionAllYou2017}, which underpins modern large language
models. Early work introduced general shortcut paths between
successive layers \citep{srivastavaTrainingVeryDeep2015}. \citet{heDeepResidualLearning2016}
popularized the idea of residual networks by showing that simple identity
shortcuts enable the training of networks with hundreds or even thousands of layers.
They further found that the identity map performed best among
several shortcut transformations \citep{heIdentityMappingsDeep2016}.
The resulting structure---an identity map perturbed by the residual
output---motivated the interpretation of the residual output as the
rate of change of the hidden state and the network itself
as the numerical approximation of a differential equation
\citep{eProposalMachineLearning2017,haberStableArchitecturesDeep2018,marionImplicitRegularizationDeep2023}.

In this paper, we consider residual networks (ResNets) of depth
\(\Layer\in\nat\), whose hidden states \(h_\layer\) evolve according to
\begin{equation}\label{e:updaterule}
    h_{\layer+1} = h_\layer + \lambda_\Layer \diffusion(w_\layer, h_\layer) v_\layer,
    \qquad \layer=0,\dots,\Layer-1.
\end{equation}
Here, \(\diffusion(w_\layer, h_\layer) v_\layer\) is the residual update produced by the
\(\layer\)-th layer with parameters \(w_\layer\) and \(v_\layer\).
The scaling factor \(\lambda_\Layer>0\) controls the magnitude of this update
and therefore how strongly each layer contributes to the overall transformation
of the input.

It is worth noting that the explicit depth-dependent scaling factor \(\lambda_\Layer\) was not part of
the original ResNet formulation. The residual blocks of
\citet{heDeepResidualLearning2016,heIdentityMappingsDeep2016} instead
incorporated {\it batch normalization} \citep{ioffeBatchNormalizationAccelerating2015} to control
the scale of activations and gradients throughout the network.
Although batch normalization has proved highly effective, it introduces both
theoretical and practical complications: its behavior depends on batch
statistics and therefore on the batch size, it treats training and inference
differently, and it incurs additional memory and communication costs
\citep{brockCharacterizingSignalPropagation2020}. These limitations have
motivated normalization-free architectures in which the magnitude of the
residual updates is controlled directly through depth-dependent scaling or
through an appropriate choice of initialization.

While depth-dependent scaling of order
\(\lambda_\Layer\asymp \Layer^{-1/2}\) repeatedly appears in this literature
\citep[e.g.][]{arpitHowInitializeYour2019,
deBatchNormalizationBiases2020,
shaoNormalizationIndispensableTraining2020,
zhangStabilizeDeepResNet2022}, it is important to highlight that the appropriate choice 
of the scaling factor does not only depend on the depth of the network, but also on
the dependence structure of the parameters across layers.

At one endpoint,
\citet{marionImplicitRegularizationDeep2023} studied ResNets whose \textbf{parameters}
at initialization are discretizations of paths that \textbf{vary smoothly} with the
layer index, with weight-tied initialization providing the simplest perfectly
correlated example. Under the scaling \(\lambda_\Layer\asymp \Layer^{-1}\), such networks are Euler
discretizations of ordinary differential equations (ODEs). Moreover, \citet{marionImplicitRegularizationDeep2023} show that the
smooth dependence of the parameters on the layer index is preserved during
training, so that the trained networks also admit an ODE limit.

Building on findings by \citet{zhangStabilizeDeepResNet2022,
cohenScalingPropertiesDeep2021,contAsymptoticAnalysisDeep2023},
the work by \citet{marionScalingResNetsLargedepth2025} characterizes the
other endpoint of \textbf{parameters initialized independently} across layers. They identified
\(\lambda_\Layer\asymp\Layer^{-1/2}\) as the critical scaling leading to
non-trivial large-depth dynamics and proved that the limiting hidden state is the
solution of an It\^o stochastic differential equation driven by Brownian
motion,
\[
    d\hCont_s
    =
    \diffusion(\cw_s,\hCont_s)\,d\bm_s.
\]
More precisely, scalings larger than \(\Layer^{-1/2}\) lead to explosion,
whereas scalings smaller than \(\Layer^{-1/2}\) suppress the random
fluctuations and cause the network to converge to the identity map. 

The sharp contrast between the ODE regime, obtained when the weights vary
smoothly with the layer index, and the Brownian SDE regime, obtained with
independent weights, naturally raises the question of whether intermediate
correlation structures can interpolate between these two limits.
\citet{marionScalingResNetsLargedepth2025} formulated this conjecture and
tested it experimentally by initializing the weights with increments of
fractional Brownian motion with Hurst parameter
\(\hurst\in(\frac12,1)\). Their experiments suggested that the transition
between explosion and identity occurs at the critical scaling
\[
    \lambda_\Layer\asymp\Layer^{-\hurst}.
\]
As explained in the next section, the principal aim of this paper is to provide
a rigorous and substantially more general answer to this conjecture, extending
it beyond the fractional Brownian setting.

\paragraph*{Our contributions}

\begin{enumerate}
    \item \textbf{Large-depth limits under correlated initialization.} Our
    first contribution (see Theorem \ref{thm:
    large depth limit of resnets with correlated weights, init} below) is a
    rigorous and more general resolution of the conjecture formulated by
\citet{marionScalingResNetsLargedepth2025}. Consider first the Gaussian
setting. For each coordinate \(i\), suppose that
\((v_\layer^i)_{\layer\geq 0}\) (where $v_\layer = (v^1_\layer,...,v^p_\layer)$ is given in  \eqref{e:updaterule}) is a centered stationary Gaussian sequence
with regularly varying covariance
\[
    \E[v^i_0 v^i_k]=: \rho(k)
    =
    k^{-\alpha}\slowVar(k),
    \qquad
    \alpha\in(0,1),
\]
where \(\slowVar\) is slowly varying. Setting
\[
    \hurst
    =
    1-\tfrac{\alpha}{2}
    \in
    \bigl(\tfrac12,1\bigr),
\]
we prove that, under the scaling
\[
    \lambda_\Layer
    =
    \tfrac{\Layer^{-\hurst}}{\slowVar(\Layer)^{1/2}},
\]
the interpolated hidden states converge, as the depth tends to infinity, to
the solution of the Young differential equation
\[
    d\hCont_s
    =
    \diffusion(\cw_s,\hCont_s)\,d\bm_s^\hurst,
\]
where \(\bm^\hurst\) is a fractional Brownian motion with Hurst parameter
\(\hurst\). Thus, fractional Brownian motion need not be built directly into
the initialization through its increments, as in the experiments of
\citet{marionScalingResNetsLargedepth2025}: it arises naturally as the
scaling limit of a broad class of long-range correlated Gaussian
initializations. 

Our main theorem actually covers the more general case in which each coordinate of
\(v_\layer\) is obtained by applying a centered nonlinear function of
{\it Hermite rank} \(q\) (see \eqref{eq: hermite expansion of feature function} and the subsequent discussion) to the underlying Gaussian sequence. Provided
\(\alpha q<1\), the self-similarity parameter and the corresponding scaling
become
\[
    \hurst
    =
    1-\tfrac{\alpha q}{2},
    \qquad
    \lambda_\Layer
    =
    \tfrac{\Layer^{-\hurst}}{\slowVar(\Layer)^{q/2}}.
\]
Under this scaling, the piecewise-linear interpolation of the hidden layers
converges in distribution, in H\"older topology, to the unique solution of
the Young differential equation
\[
    d\hCont_s
    =
    \diffusion(\cw_s,\hCont_s)\,d\cv_s,
    \qquad
    \hCont_0=Ax,
\]
where \(\cv\) is a {\it Hermite process} of rank \(q\) and
self-similarity parameter \(\hurst\); see
Definition~\ref{def: Hermite process}. Fractional Brownian motion is recovered
when \(q=1\), while the case \(q=2\) corresponds to the so-called {\it Rosenblatt process}, as discussed, e.g., by \cite{Tudor13, Tudor23} and \cite{pipirasLongRangeDependenceSelfSimilarity2017}.
We observe that Hermite ranks one and two are especially relevant in
applications: the identity map and many transformations used to generate the
initialization have Hermite rank one, whereas symmetry may force the first
Hermite coefficient to vanish and lead to Hermite rank two
\citep{baiSensitivityHermiteRank2019}; in general, it is not difficult to
construct functions with arbitrarily high Hermite rank.
Theorem~\ref{thm: large depth limit of resnets with correlated weights, init}
therefore identifies a whole family of large-depth limits and shows that both
the appropriate scaling and the limiting dynamics are determined jointly by
the decay of correlations and the Hermite rank of the initialization.

Our limiting theory contributes towards an
\textit{asymptotic theory for selecting initialization hyperparameters},
namely the one-layer distribution of the weights, their dependence structure
across layers, and the depth-dependent scaling factor. Consistently with the
central limit theorem, \citet{marionScalingResNetsLargedepth2025} show that
independent initialization exhibits a strong universality phenomenon: after
centering and normalization, a broad class of
finite-variance distributions leads to the same Brownian-driven SDE in the
large-depth limit, under the scaling $\lambda_L\asymp L^{-1/2}$. In this regime, many details of the one-layer distribution
are therefore asymptotically immaterial. Our main result,
Theorem~\ref{thm: large depth limit of resnets with correlated weights, init},
shows that the picture changes in the presence of long-range dependence: the
decay of correlations and the Hermite rank of the transformation used to
generate the weights jointly determine both the appropriate depth scaling and
the nature of the limiting driver. Thus, once correlations across layers are introduced, not only their
strength and decay, but also the way in which the one-dimensional weight
distribution is generated, become relevant initialization hyperparameters.
Our results do not provide a complete selection procedure, but they identify
which features of the initialization can genuinely alter the infinite-depth
dynamics.

\smallskip

Our results also substantially extend the
work of \citet{hayashiFractionalSDENetGeneration2022}, who introduced
fractional-Brownian-driven neural differential equations to model long-range
dependence in a time-series setting: whereas fractional Brownian motion is postulated there as the
driving noise, in our setting it arises naturally as a large-depth limit of
correlated ResNet initialization, and is further replaced by general Hermite
processes for nonlinear transformations of the underlying Gaussian sequence.

    \item\textbf{Stability and approximation of Young differential equations.}
To prove the ResNet convergence result, we develop a general stability and
approximation theory for parameter-dependent Young differential equations in
Banach spaces of the form
\[
    dx_t=\sigma(t,w_t,x_t)\,dg_t,
\]
where both the driving signal $g$ and the parameter path $w$ are H\"older
continuous with H\"older exponents strictly larger than $1/2$. We establish existence and uniqueness, local Lipschitz continuity
of the solution with respect to the initial condition, the driving signal, and
the parameter path, as well as convergence of Euler approximations in H\"older
topology. The ResNet convergence theorem
(Theorem~\ref{thm: large depth limit of resnets with correlated weights, init})
then follows as a direct application of these continuity and approximation
results, developed in
Section~\ref{sec: young integral equation solution theory}. This theory is of
independent interest beyond the neural-network application. 

\begin{remark}[Related work on Young differential equations] Differential equations driven by paths of finite $p$-variation, with $p<2$,
go back to the foundational work of \citet{youngInequalityHolderType1936} and \citet{lyonsDifferentialEquationsDriven1994}.
Existence, uniqueness, continuity, flow properties, and Euler approximation
for autonomous Young differential equations have been studied in several
works; see, in particular, \citep{huDifferentialEquationsDriven2007,lejayControlledDifferentialEquations2010}. Time-dependent Young differential equations, including equations driven by
fractional Brownian motion, were considered in
\citep{nualartDifferentialEquationsDriven2002,congNonautonomousYoungDifferential2018}. More broadly, Young differential
equations fit within the rough-path framework; see, for instance,
\citep{lyonsDifferentialEquationsDriven1998,bailleulRegularityItoLyonsMap2015} and the systematic presentation in
\citep[Chapter~8]{frizCourseRoughPaths2020}. Our results extend
this literature by treating Banach-space-valued equations with a separate
H\"older parameter path and by providing stability and Euler convergence
directly in H\"older topology, complementing the classical approximation
results for Young and rough differential equations
\citep{davieDifferentialEquationsDriven2008,frizEulerEstimatesRough2008,lejayControlledDifferentialEquations2010}.
    
\end{remark}
\end{enumerate}

 \section{Depth Limit with correlated weights at initialization}
\label{sec: resnet limit}

In this section, we characterize the large-depth limit of ResNets whose weights are
correlated across layers at initialization. After introducing a generalized
ResNet architecture, we specify a class of correlated initializations for
which the partial sums of residual updates converge to a Hermite process. We then
show that, under suitable regularity assumptions on the activation function
and the remaining weights, the interpolated hidden states converge to the
solution of a Young differential equation driven by this process. The proof
combines a functional limit theorem for correlated random walks \citep{benningFunctionalScalingLimits2026} with stability
of Young differential equations and convergence of their Euler
discretizations (Section \ref{sec: young integral equation solution theory}).

\begin{definition}[General ResNet]
    \label{def: generalized resnet}
    A generalized residual network \(\resnet=\resnet_{A, B,
    (v_\layer)_{\layer=0}^{\Layer-1}, (w_\layer)_{\layer=0}^{\Layer-1}}\) 
    with parameters \((A, B, (v_\layer)_{\layer=0}^{\Layer-1}, (w_\layer)_{\layer=0}^{\Layer-1})\)
    maps an input \(x\in \real^{\dimIn}\) through a series of hidden layers
    \begin{align}
        \hDiscr_0 &\coloneq A x
        \\
        \hDiscr_{\layer +1} &\coloneq \hDiscr_\layer + \lambda_\Layer \diffusion(w_\layer, \hDiscr_\layer) v_\layer 
        && \layer \in \set{0, \dots, \Layer -1}
    \end{align}
    to an output \(\resnet(x) \coloneq B\hDiscr_{\Layer} \in \real^{\dimOut}\),
    with
    \begin{itemize}[noitemsep]
        \item \(\lambda_\Layer\in [0, \infty)\), a scaling factor that depends on the depth
        \(\Layer\) of the ResNet,
        \item input and output processing matrices
        \(A\in \real^{\dims\times\dimIn}\)
        and
        \(B\in \real^{\dimOut\times\dims}\),
        \item parameters \(w_\layer\in\banachSpace[W]\), where
        \(\banachSpace[W]\) is a fixed Banach space, and
        \(v_\layer\in\real^r\), which determine the residual update through the
        continuous map
        \[
            \diffusion\colon
            \banachSpace[W]\times\real^\dims
            \longrightarrow
            \real^{\dims\times r}.
        \]
\end{itemize}
\end{definition}

\begin{example}[Classic ResNet]
    Classically, a ResNet is of the form
    \[
        \hDiscr_{\layer+1}
        = \hDiscr_\layer + \lambda_\Layer V_\layer \varphi(W_\layer \hDiscr_\layer + b_\layer),
    \]
    with activation function \(\varphi\colon \real\to \real\) applied
    component-wise and parameter matrices \(W_\layer\in \real^{m \times \dims}\)
    and \(V_\layer\in \real^{\dims \times m}\) and a bias vector \(b_\layer\).
    This is a special case of the general ResNet (Definition \ref{def: generalized resnet}) with
    \[
            V_\layer \varphi(W_\layer\hDiscr_\layer +  b_\layer)
            = \underbrace{(\varphi(W_\layer \hDiscr_\layer + b_\layer)^\transpose \otimes \id_{\dims})}_{
                \displaystyle\eqcolon \diffusion(\underbrace{(W_\layer, b_\layer)}_{\eqcolon w_\layer}, \hDiscr_\layer)
            } \underbrace{\vect{V_\layer}}_{\displaystyle\eqcolon v_\layer \mathrlap{\in \real^{m\dims}}}
            = \diffusion(w_\layer, \hDiscr_\layer) v_\layer,
    \]
    where \(\vect{A}\) stacks the columns of the matrix \(A\) into a vector and
    \(\otimes\) is the Kronecker product \citep[see
    e.g.][Prop.~2]{kschischangKroneckerProductVectorization2022}.
\end{example}

We now specify the model for correlations between the parameters
\(v_\layer\) across layers. For each coordinate, we obtain \(v_\layer\) by
applying a feature function to a stationary Gaussian sequence with regularly
varying covariance. The construction is most transparent for the identity
feature function, in which case \(v_\layer\) is itself a stationary Gaussian
sequence. Allowing more general feature functions can yield examples of ResNets whose
scaling limits are solutions of stochastic differential equations driven by
Hermite processes instead of the fractional Brownian motion.

\begin{definition}[Correlated initialization]
    \label{def: correlated initialization}
    For a feature function \(\feature \colon \real \to \real\) we define
    \[
        v_\layer^i \coloneq \feature(\xi_\layer^i), \qquad i\in \set{1, \dots, r}, \layer \in \set{0, \dots, \Layer -1},
    \]
    where \((\xi_\layer^{i})_{\layer \in \nat_0}\) are stationary Gaussian
    sequences in \(\real\), independent over \(i\), with zero mean \(\E[\xi_k^i] = 0\), unit variance
    \(\var(\xi_k^i) = 1\) and regularly-varying correlation of index \(\alpha \in (0, 1)\)
    \[
        \rho(k) \coloneq \E[\xi_\layer^i \xi_{\layer+k}^i] = k^{-\alpha}\slowVar(k), \quad k\geq 1,l\geq 0,
    \]
    where \(\slowVar\) is a slowly varying function.
\end{definition}
\begin{remark}{\rm
The centering condition
\begin{equation}
    \label{eq: centering condition init} 
    \E[v_\layer^i] = \E[\feature(\xi_\layer^{i})] = 0
\end{equation}
is essential to obtain an intermediate scaling. If the mean were nonzero, its
contribution would accumulate over the \(\Layer\) layers and require the scaling
\(\Layer^{-1}\). This scaling would average out the random fluctuations and the
deterministic mean would dominate the limit. Observe that, for the identity
feature function \(\feature(x)=x\), the centering condition
\eqref{eq: centering condition init} follows directly from the assumption
\(\E[\xi_\layer^i] = 0\).}
\end{remark}

Let us first retain the assumption \(\feature(x)=x\). The linearly interpolated
partial sums of the correlated initializations then form the interpolated
correlated Gaussian random walk
\begin{equation}
    \label{eq: random walk with correlated increments}
    \iv_s^\Layer
    \coloneq
    \lambda_\Layer
    \Bigl(
        \sum_{\layer=0}^{\floor{\Layer s}-1} v_\layer
        +
        \underbrace{
            (\Layer s-\floor{\Layer s})v_{\floor{\Layer s}}
        }_{\text{linear interpolation}}
    \Bigr), \qquad s\in [0,1].
\end{equation}
For
\[
    \hurst=1-\tfrac{\alpha}{2},
    \qquad
    \lambda_\Layer
    =
    \tfrac{\Layer^{-\hurst}}{\slowVar(\Layer)^{1/2}},
\]
this process converges in H\"older topology to a fractional Brownian motion
with Hurst parameter \(\hurst\)
\citep{benningFunctionalScalingLimits2026}. Thus, the identity case,
\[
    \feature(x)=x
    \qquad\text{leads to}\qquad
    \iv^\Layer\overset{d}\to\bm^\hurst,
\]
where $\overset{d}\to$ indicates convergence in distribution in an appropriate topology. This is the correlated analogue of the independent Gaussian setting, where
the initialization variables can be viewed as increments of a Brownian
motion, which then drives the infinite-depth limit. 

To determine what replaces fractional Brownian motion for more general feature
functions, we use the {\it Hermite rank} of \(\feature\). If
\(\feature(\xi_\layer^i)=v_\layer^i\in L^2(\Omega)\), then \(\feature\)
admits an expansion in the Hermite polynomials $(H_k)_{k\geq 0}$ (see e.g. \cite[Chapter 1]{bluebook}),
\begin{equation}
    \label{eq: hermite expansion of feature function}
    \feature(x)
    =
    \sum_{k=\hermRank}^{\infty}c_kH_k(x),
    \qquad
    c_{\hermRank}\neq0, \qquad \sum_{k=\hermRank}^\infty k! c_k^2<\infty.
\end{equation}
The Hermite rank \(\hermRank\) of $\feature$ is therefore the smallest index corresponding to a
non-zero coefficient in this expansion. Note that the centering assumption
\(\E[\feature(\xi_\layer^i)]=0\) in
\eqref{eq: centering condition init} is equivalent to
\(\hermRank\geq1\). Since \(H_1(x)=x\), the identity feature has Hermite rank
one, as do many commonly occurring functions
\citep{baiSensitivityHermiteRank2019}. 

For a feature function of generic Hermite rank \(\hermRank\), provided
\(\alpha\hermRank<1\), the correlated random
walk requires a different normalization and has, in general, a non-Gaussian
limit. More precisely, setting
\[
    \hurst
    =
    1-\tfrac{\alpha\hermRank}{2},
    \qquad
    \lambda_\Layer
    =
    \tfrac{\Layer^{-\hurst}}
         {\slowVar(\Layer)^{\hermRank/2}},
\]
the process \(\iv^\Layer\) converges in H\"older topology to a Hermite process
\((\cv_s)_{s\in[0,1]}\) of rank \(\hermRank\) and self-similarity parameter
\(\hurst\) (see Definition~\ref{def: Hermite process}, as well as \citep[Chapter 3]{Tudor13}, \citep[Chapter 2]{Tudor23} and
\citep{benningFunctionalScalingLimits2026}). In other words,
\[
    \mbox{Hermite rank of } \feature=\hermRank
    \qquad\text{leads to}\qquad
    \iv^\Layer\overset{d}\to\cv.
\]
The Hermite process of rank \(\hermRank=1\) is the fractional Brownian motion.

\smallskip

The significance of this functional limit for the ResNet is that
\(\iv^\Layer\) plays the role of the cumulative driving signal in the
residual recursion. Once the remaining parameters \(w_\layer\) are shown to
approximate a sufficiently regular path \(\cw\), the stability results
developed below will allow us to pass to the limit in this recursion and prove
that the interpolated hidden states converge to the solution of a {\it Young-type differential equation} (see Remark \ref{r:young} for details)
\[
    d\hCont_s
    =
    \diffusion(\cw_s,\hCont_s)\,d\cv_s,
    \qquad
    \hCont_0=Ax.
\]
We therefore consider the piecewise-linear interpolation of the remaining
parameters,
\[
    \iw_s^\Layer
    =
    w_{\floor{\Layer s}}
    +
    \underbrace{
        (\Layer s-\floor{\Layer s})
        \bigl(
            w_{\floor{\Layer s}+1}
            -
            w_{\floor{\Layer s}}
        \bigr)
    }_{\text{linear interpolation}}
\]
and assume that \(\iw^\Layer\) converges in distribution, in H\"older
topology, to some \(\cw\) as \(\Layer\to\infty\).
The overall mechanism
can therefore be summarized as
\[
    \iv^\Layer\overset{d}\to\cv,
    \qquad
    \iw^\Layer\overset{d}\to\cw
    \qquad\text{implies}\qquad
    \hInterp^\Layer\overset{d}\to\hCont,
\]
where the written implication follows from the stability of the Young
differential equation. We observe that two families of parameters play distinct roles:
the variables \(v_\layer\) represent increments of the limiting driver
\(\cv\), whereas the parameters \(w_\layer\) approximate the values
\(\cw_{\layer/\Layer}\) of the limiting parameter path.

\medskip

Next we state the required regularity assumptions about the activation function \(\diffusion\)
for our main result. See Remark~\ref{r:boundedsigma} for a discussion of the technical role played by the boundedness of \(\sigma\) in our proofs.

\begin{assumption}[Regularity of the activation function]
    \label{assmpt: regularity of the activation function}
    The function \(\diffusion\colon \banachSpace[W] \times \real^\dims  \to \real^{\dims \times r}\) is bounded, \(\norm{\diffusion}_\infty < \infty\), and
    locally Lipschitz continuous with locally Lipschitz continuous derivatives in the second variable. That is,
    there exists a continuous function \(c\colon \real^2 \to [0,\infty)\) such that
    \[
        \begin{aligned}
        \abs{\sigma(w, h) - \sigma(\tilde w, \tilde h)}
        &\le c(\abs{w}, \abs{\tilde w})\Bigl(\abs{h-\tilde h} + (1+\abs{h}+\abs{\tilde h})\abs{w-\tilde w}\Bigr)
        \\
        \abs{\nabla_h \sigma(w, h) - \nabla_h \sigma(\tilde w, \tilde h)}
        &\le \underbrace{c(\abs{w}, \abs{\tilde w})}_{\text{locally bounded}}\underbrace{\Bigl(\abs{h-\tilde h} + (1+\abs{h}+\abs{\tilde h})\abs{w-\tilde w}\Bigr)}_{\text{local Lipschitz control}}
        \end{aligned}
    \]
\end{assumption}

The assumption above is sufficient if \(w_{\floor{\Layer s}} = \cw_s\) for a
fixed Hölder continuous process \(\cw\).  If we want \(w_{\floor{\Layer s}} =
\iw^\Layer_s\) with \(\iw^\Layer \overset{d}\to \cw\) in Hölder space, then we
need the following additional regularity assumption.

\begin{assumption}[Additional regularity]
    \label{assmpt: additional regularity}
    The activation function is differentiable in the first variable, with a gradient that is also locally Lipschitz.
    More precisely, there exists a continuous function \(\tilde c \colon \real^4 \to [0, \infty)\) such that
    \[
        \abs{\nabla_w \sigma(w, x) - \nabla_w \sigma(\tilde w, \tilde x)}
        \le \tilde c(\abs{w}, \abs{\tilde w}, \abs{x}, \abs{\tilde x})\bigl(\abs{x-\tilde x} + \abs{w-\tilde w}\bigr).
    \]
\end{assumption}

\begin{example}[Sufficiently nice activation function]
    \label{ex: sufficiently nice function}
    If \(\psi \colon \real \to \real\) is a continuously differentiable activation function such that
    \(\psi\) and \(\psi'\) are bounded and \(\psi'\) is Lipschitz continuous (e.g.\ \(\psi''\) is bounded),
    then the generalized activation function
    \[
        \sigma(w,x) \coloneq \psi(Wx + b)^\transpose \otimes \id_\dims
        \qquad w=(W, b) \in \real^{m\times \dims} \times \real^m, x\in\real^\dims
    \]
    satisfies Assumption \ref{assmpt: regularity of the activation function} and \ref{assmpt: additional regularity}. Examples for such an activation function \(\psi\) are sigmoid functions such as
    \[
        \psi(x) \in \set[\big]{
         \tanh(x),
         \arctan(x),
         \tfrac1{1+e^{-x}},
         \mathrm{erf}(x)
        }.
    \]

\end{example}

Before stating our main result, Theorem~\ref{thm: large depth limit of resnets with correlated weights, init},
we formally introduce Hermite processes in Definition~\ref{def: Hermite process} and Remark \ref{r:propertieshermite},
and briefly recall the notion of a Young differential equation in
Remark~\ref{r:young}.
\begin{definition}[Hermite process; {see e.g. \citep[Def. 3.1]{Tudor13}}]
    \label{def: Hermite process}
    The rank \(\hermRank\) {\it Hermite process with Hurst index} \(\hurst \in (\frac12, 1)\) is defined as
    \[
        Z^{\hermRank,\hurst}_t \coloneq A_{\hermRank, \hurst} \int_{\real^\hermRank}' \int_0^t \prod_{j=1}^\hermRank (s-x_j)_+^{-(\frac12 + \frac{1-\hurst}{\hermRank})} ds\, W(dx_1)\cdots W(dx_\hermRank),\quad t\geq 0,
    \]
    where \(W\) is the Wiener Gaussian white noise measure and \(\int'_{\real^\hermRank} (...) W(dx_1)\cdots W(dx_\hermRank)\) indicates a {\it multiple Wiener-Itô integral} of order $\hermRank$ (see \cite[Section 2.7]{bluebook}). The normalizing
    constant \(A_{\hermRank, \hurst}\) is selected so that
    \(\Exp{(Z^{\hermRank,\hurst}_1)^2} = 1\) and is known explicitly \citep[Proposition 3.1]{Tudor13}.
    Equivalent representations are given e.g. in \cite[Section 3.1.2]{Tudor13} and  \citep[Cor.~4.2.11]{pipirasLongRangeDependenceSelfSimilarity2017}.
\end{definition}

\begin{remark}\label{r:propertieshermite}
    The following facts are well-known (see e.g. \cite[Section 3.1.1]{Tudor13}):
    \begin{itemize}
      \item[(i)] for every $H\in (\frac12, 1)$ and $q\geq 1$, the process $Z^{q,H}$ is $H$-{\it self-similar}, that is: for every $c>0$, $(Z_{ct}^{q,H})_{t\geq 0}$ and $(c^HZ_t^{q,H})_{t\geq 0}$ have the same law; 
      \item[(ii)] For every $q\geq 1$, $Z^{q,H}$ is centered, has stationary increments and its covariance is given by
      \begin{equation}\label{e:fbmc}
      \E[Z_t^{q,H}Z_s^{q,H}] = \frac12\left\{ t^{2H}+s^{2H}- |t-s|^{2H}\right\}, \quad s,t\geq 0;
      \end{equation}
      \item[(iii)] For every $\gamma\in (0,H)$, the process $Z^{q,H}$ admits a modification whose sample paths are locally $\gamma$-H\"older continuous with probability one.
    \end{itemize}
    It can be shown that if \(\hermRank=1\) then,
    \(Z^{1,\hurst}=\bm^\hurst\) is a standard {\it fractional Brownian motion} with Hurst index $H$ (that is, $Z^{1,\hurst}$  is a centered Gaussian process with covariance \eqref{e:fbmc}). The Hermite process with rank \(\hermRank=2\) corresponds to the so-called
    {\it Rosenblatt process} (see e.g. \cite[Section 3.2]{Tudor13} or \cite[Section 2.3.2]{Tudor23}).
\end{remark}

\begin{remark}[Young Differential Equations]\label{r:young}

    In this paper, an equation of the form
    \[
        d h_s=\diffusion(w_s,h_s)\,d z_s,
        \qquad h_0=a,
    \]
    where $z$ is typically a Hermite process, is understood pathwise in the Young sense. More precisely, a stochastic
    process \(h\) is a solution if, for almost every realization of
    \((w,z)\),
    \[
        h_t
        =
        a+\int_0^t \diffusion(w_s,h_s)\,d z_s,
        \qquad t\in[0,1].
    \]
    Here, the integral is the {\it Young integral}: if \(z\) is
    \(\beta\)-H\"older continuous and
    \(s\mapsto\diffusion(w_s,h_s)\) is \(\eta\)-H\"older continuous, with $\eta, \beta\in (0,1]$ and
    \(\eta+\beta>1\), then
    \[
        \int_0^t \diffusion(w_s,h_s)\,d z_s
        =
        \lim_{\lvert\pi\rvert\to0}
        \sum_{[u,v]\in\pi}
        \diffusion(w_u,h_u)\bigl(z_v-z_u\bigr),
    \]
    where \(\pi\) ranges over partitions of \([0,t]\) and
    \(\lvert\pi\rvert\) denotes their mesh. In particular, the integral and
    the resulting differential equation are defined pathwise, rather than in
    the It\^o sense. Since a Hermite process with self-similarity parameter \(H>\frac12\) has
sample paths that are \(\gamma\)-H\"older continuous for every \(\gamma<H\) (see Remark \ref{r:propertieshermite}),
the limiting drivers considered below fall within the Young framework. We refer to
    \citet[Chapter~8]{frizCourseRoughPaths2020} for further details.
\end{remark}

The (standard) functional spaces appearing in the following theorem are formally introduced in Definition \ref{d:functionalspaces}.

\begin{mdframed}[innertopmargin=0pt]
\begin{theorem}[Large depth limit of ResNets with correlated weights at initialization]
    \label{thm: large depth limit of resnets with correlated weights, init}
    Let \(\resnet = \resnet_{A, B, (v_\layer)_{\layer=0}^{\Layer-1},
    (w_\layer)_{\layer=0}^{\Layer-1}}\) be a ResNet as in Definition \ref{def: generalized resnet}
    with an activation function \(\diffusion\) that satisfies Assumption \ref{assmpt: regularity of the activation function}
    and correlated initialization of \(v_\layer\) as in Definition~\ref{def:
    correlated initialization}. Let \((w_\layer)_{\layer=0}^{\Layer-1}\) be initialized, independently of
    \((v_\layer)_{\layer=0}^{\Layer-1}\), with either
    \begin{enumerate}[noitemsep, label=(\roman*)]
        \item[\rm (i)] \(w_\layer = \cw_{\frac{\layer}{\Layer}}\) for some \(\cw \in C^{\beta_w}([0,1], \banachSpace[W])\) with \(\beta_w \in (\frac12,1]\), or
        \item[\rm (ii)] \(w_\layer = \iw^\Layer_{\frac{\layer}{\Layer}}\) for \(\iw^\Layer \overset{d}\to \cw\) in \(C^{\beta_w}([0,1], \banachSpace[W])\) with \(\beta_w \in (\frac12,1]\)
        and Assumption \ref{assmpt: additional regularity} is satisfied.
    \end{enumerate}
    Assume that \(v_\layer^i \in L^p(\Omega)\) for some \(p > \frac{2}{1-\alpha
    \hermRank}\) and \(\alpha \in (0, \frac1\hermRank)\), where \(\hermRank\ge 1\) is
    the Hermite rank of the feature function \(\feature\) that produces \(v_\layer^i\).
    Then for the scaling
    \[
        \lambda_\Layer = \tfrac{\Layer^{-\hurst}}{\slowVar(\Layer)^{\frac{\hermRank}2}},
    \]
    we have for all \(\beta \in (0, \min\set{\hurst - \frac1p, \beta_w})\) with \(\hurst = 1 - \frac{\alpha\hermRank}{2}\in(\frac12,1)\) that
    \[
        \iv^\Layer \overset{d}\to \cv, \text{ in } C^\beta([0,1], \R^r),
        \quad\text{and}\quad
        \hInterp^\Layer \overset{d}\to \hCont,
        \quad \text{in } C^\beta([0,1], \R^\dims),
        \quad \text{as} \quad \Layer \to \infty,
    \]
    where 
    \begin{itemize}[noitemsep]
        \item \(\iv^\Layer\) is the interpolated sum process of the parameters \(v_\layer\) defined in \eqref{eq: random walk with correlated increments},
        \item \(\cv=\gamma (Z^{\hermRank,\hurst,1}, \ldots,
        Z^{\hermRank,\hurst,r})\) are independent Hermite processes (Def.~\ref{def: Hermite process})
        \(Z^{\hermRank,\hurst,i}\) of rank \(\hermRank\) and self-similarity
        parameter \(\hurst\) scaled by \(\gamma^2 = c_\hermRank^2
        \hermRank!/(\hurst(2\hurst-1))\) with \(c_\hermRank\) from \eqref{eq:
        hermite expansion of feature function},
        \item \(\hInterp^\Layer\) is the piecewise linear interpolation of the
        hidden layers \(\hDiscr_\layer\), that is
        \[
            \hInterp_s^\Layer \coloneq \hDiscr_{\floor{\Layer s}} + \underbrace{(\Layer s - \floor{\Layer s})(\hDiscr_{\floor{\Layer s}+1} - \hDiscr_{\floor{\Layer s}})}_{
                \text{linear interpolation}
            }
        \]
        \item 
        \(\hCont\) is the unique solution of the differential equation
        \[
            d\hCont_s = \diffusion(\cw_s, \hCont_s) d\cv_s \quad \text{with} \quad \hCont_0 = Ax,
        \]
        which is a.s. contained in \(C^{\beta}([0,1], \real^\dims)\).
    \end{itemize}
\end{theorem}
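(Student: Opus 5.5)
The plan is to split the statement into a probabilistic input and a deterministic stability statement, and then combine them with the continuous mapping theorem. For the first claim, \(\iv^\Layer \overset{d}\to \cv\) in \(C^\beta([0,1],\R^r)\), we invoke the functional limit theorem of \citet{benningFunctionalScalingLimits2026} coordinatewise. Each coordinate \(\iv^{\Layer,i}\) is the interpolated partial-sum process of \(\feature(\xi^i_\layer)\). Under \(\alpha\hermRank<1\) and the moment condition \(p>2/(1-\alpha\hermRank)\), it converges in \(C^\beta\) for \(\beta<\hurst-\frac1p\) to \(\gamma Z^{\hermRank,\hurst}\). The constant is identified by \(\var(\sum_{\layer<\Layer}\feature(\xi_\layer))\sim c_\hermRank^2\hermRank!\sum_{j,k<\Layer}\rho(j-k)^\hermRank\sim c_\hermRank^2\hermRank!\,\Layer^{2\hurst}\slowVar(\Layer)^\hermRank/(\hurst(2\hurst-1))\). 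Since the coordinates are independent, the coordinatewise limits assemble into joint convergence of the vector with independent limiting components. Independence of \((v_\layer)\) and \((w_\layer)\) then upgrades this to joint convergence \((\iv^\Layer,\iw^\Layer)\overset{d}\to(\cv,\cw)\) in the product Hölder space: in case (i) \(\iw^\Layer\) is deterministic, and in case (ii) we take products of independent converging laws.

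The second step recasts the network as an Euler scheme. By construction \(\hDiscr_{\layer+1}=\hDiscr_\layer+\diffusion(w_\layer,\hDiscr_\layer)(\iv^\Layer_{(\layer+1)/\Layer}-\iv^\Layer_{\layer/\Layer})\), so \(\hInterp^\Layer\) is exactly the piecewise-linear Euler approximation, on the grid of mesh \(1/\Layer\), of the Young equation \(dx=\diffusion(w,x)\,dz\), driven by \(z=\iv^\Layer\) with parameter path \(w=\iw^\Layer\) and initial value \(Ax\). Write \(\Phi(z,w)\) for the solution map of this Young equation and \(\Phi_\Layer(z,w)\) for its Euler scheme. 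Then \(\hInterp^\Layer=\Phi_\Layer(\iv^\Layer,\iw^\Layer)\), and in case (i) also \(w_\layer=\cw_{\layer/\Layer}\). From the theory of Section~\ref{sec: young integral equation solution theory} we use the following results, taking exponents \(\beta'\in(\beta,\min\{\hurst-\frac1p,\beta_w\})\) with \(\beta'>\frac12\) for the driver. First, existence and uniqueness for \(\Phi\). Second, local Lipschitz continuity of \(\Phi\) in \((z,w)\) in Hölder topology. Third, the Euler error bound \(\norm{\Phi_\Layer(z,w)-\Phi(z,w)}_{C^{\beta}}\to0\), uniformly over \((z,w)\) in bounded subsets of \(C^{\beta'}\times C^{\beta_w}\). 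Boundedness of \(\diffusion\) (Assumption~\ref{assmpt: regularity of the activation function}) gives a priori Hölder bounds on solutions without any growth restriction. In case (ii), where \(\iw^\Layer\) itself varies, Lipschitz dependence on \(w\) inside a Young integral requires controlling \(s\mapsto\diffusion(w_s,h_s)-\diffusion(\tilde w_s,\tilde h_s)\) in Hölder norm. This is where Assumption~\ref{assmpt: additional regularity} enters, through a first-order Taylor expansion in \(w\).

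Finally we conclude with an extended continuous mapping argument. If \((z_\Layer,w_\Layer)\to(z,w)\) in \(C^{\beta'}\times C^{\beta_w}\), then
\[
\norm{\Phi_\Layer(z_\Layer,w_\Layer)-\Phi(z,w)}_{C^\beta}\le \norm{\Phi_\Layer(z_\Layer,w_\Layer)-\Phi(z_\Layer,w_\Layer)}_{C^\beta}+\norm{\Phi(z_\Layer,w_\Layer)-\Phi(z,w)}_{C^\beta}.
\]
Both terms vanish, the first by uniformity of the Euler bound on bounded sets and the second by continuity of \(\Phi\). The extended continuous mapping theorem (or a Skorokhod representation, using separability of the closure of smooth paths in the little Hölder space) then yields \(\hInterp^\Layer\overset{d}\to\Phi(\cv,\cw)=\hCont\). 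The solution \(\hCont\) lies in \(C^\beta\) because it has the Hölder regularity of its driver.

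I expect the main obstacle to be the technical Euler approximation estimate: convergence in Hölder (not just uniform) norm, uniform over bounded sets of drivers, with a simultaneously varying parameter path in a Banach space. The approach I would try first is a sewing-lemma comparison of the scheme's local increments with the Young integral increments. The local defect is bounded by \(|t-s|^{\beta'+\min(\beta',\beta_w)}\), which sums to \(\Layer^{1-\beta'-\min(\beta',\beta_w)}\to0\). Combining this with a discrete Gronwall argument and then interpolating between \(\beta'\) and \(\beta\) converts boundedness in \(C^{\beta'}\) plus uniform convergence into convergence in \(C^\beta\).
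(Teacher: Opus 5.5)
Your proposal is correct and follows the paper's proof. The driver convergence comes from the cited functional limit theorem together with independence, and the network is handled by the same triangle inequality through the Wong--Zakai solution \(\Phi(\iv^\Layer, w^\Layer)\). This uses continuity of the Young solution map (Corollary~\ref{cor: solution is locally Lipschitz in inputs}; Assumption~\ref{assmpt: additional regularity} is needed only in case (ii), since in case (i) the parameter path is the fixed \(\cw\)) and the Euler bound that is uniform over bounded sets of drivers and parameters (Theorem~\ref{thm: convergence of euler scheme}).

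The only difference is the probabilistic packaging. You invoke the extended continuous mapping theorem, whereas the paper makes the same argument explicit: it restricts to the event \(\{\norm{\iv^\Layer}_{\gamma_1}\le R,\ \norm{w^\Layer}_{\gamma_2}\le R\}\), tests against bounded Lipschitz functions, and lets \(R\to\infty\) via tightness.
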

\end{mdframed}

\begin{proof}[Sketch of the proof]
    The convergence of \(\iv^\Layer\) to \(\cv\) in Hölder space follows from a functional limit theorem for correlated random walks
    \citep{benningFunctionalScalingLimits2026}. To get convergence of \(\hInterp^\Layer\) to \(\hCont\) we essentially apply a triangle
    inequality in the following way: We define \(\wZh^\Layer\) as the solution of the differential equation
    \begin{equation}
        \label{eq: wong-zakai differential equation}    
        d\wZh^\Layer_s = \diffusion(w^\Layer_s, \wZh^L_s) d\iv^\Layer_s \quad \text{with} \quad \wZh^\Layer_0 = Ax,
    \end{equation}
    with \(w^\Layer \in \set{\cw, \iw^\Layer}\) depending on the initialization assumption.
    With \(\Layer \to \infty\) the parameters of this Wong-Zakai approximation of the limiting Young differential equation (see \cite[Section 9.2]{frizCourseRoughPaths2020})
    converge to the parameters of the original differential equation that define \(\hCont\). So we get
    convergence of \(\wZh^\Layer\) to \(\hCont\) by stability results about Young
    differential equations (Theorem \ref{thm: differential equation solution
    existence and uniqueness} and Corollary \ref{cor: solution is locally
    Lipschitz in inputs}). With \(\wZh^\Layer \to \hCont\) established we then essentially show that
    the difference between \(\wZh^\Layer\) and \(\hInterp^\Layer\) vanishes
    asymptotically.
    Since \(\hInterp^\Layer\) is simply the Euler discretization of the
    Wong-Zakai differential equation \eqref{eq: wong-zakai differential
    equation}, the difference between \(\wZh^\Layer\) and \(\hInterp^\Layer\)
    is controlled by our general result about the convergence of the Euler method
    for Young differential equations (Theorem \ref{thm: convergence of euler scheme}).
\end{proof}

The proof of Theorem \ref{thm: large depth limit of resnets with correlated weights, init}
consequently relies on general stability results about Young differential
equations.  These results of independent interest are the
content of Section~\ref{sec: young integral equation solution theory}.

Before moving on we highlight the natural conjecture that the scaling
\begin{equation}
    \label{eq: critical scaling for correlated resnets}
    \lambda_\Layer \asymp \frac{\Layer^{-\hurst}}{\slowVar(\Layer)^{\frac{\hermRank}2}}
\end{equation}
is necessary for a non-trivial limit of the ResNet at initialization.
Larger, super-critical scaling should lead to a blow-up of the hidden states, while
smaller sub-critical scaling should lead to a trivial limit.
The following corollary shows the latter. 

\begin{corollary}[Sub-critical scaling]
    \label{cor: trivial limit for sub-critical scaling}
    Assume the setting of Theorem \ref{thm: large depth limit of resnets with correlated weights, init}.
    In particular, let \(\lambda_\Layer \coloneq 
    \frac{\Layer^{-\hurst}}{\slowVar(\Layer)^{\hermRank/2}}\) be the
    standard scaling. Let
    \(\hInterp^{\Layer, \dagger}_s\) be the interpolated hidden states of the ResNet with
    a different scaling \(\lambda_\Layer^\dagger\) with initial condition \(h_0=Ax\).
    Then for all \(\beta \in (0, \min\set{\hurst - \frac1p, \beta_w})\)
    \begin{alignat}{3}
        \lim_{\Layer \to \infty}\frac{\lambda_\Layer^\dagger}{\lambda_\Layer} = 0
        &\qquad\implies\qquad &
        \hInterp^{\Layer, \dagger} \overset{p}\to (s\mapsto h_0), \qquad \text{in } C^\beta([0,1], \real^\dims).
        \tag{Identity limit}
    \end{alignat}
\end{corollary}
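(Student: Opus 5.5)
The plan is to reduce the sub-critical case to the critical one by rescaling the driver, and then use the local Lipschitz continuity of the Young solution map in the driving signal. Set \(\epsilon_\Layer \coloneq \lambda_\Layer^\dagger/\lambda_\Layer \to 0\). The ResNet with scaling \(\lambda_\Layer^\dagger\) has the recursion \(\hDiscr_{\layer+1} = \hDiscr_\layer + \lambda_\Layer \diffusion(w_\layer,\hDiscr_\layer)(\epsilon_\Layer v_\layer)\). It is therefore the Euler scheme for the Young equation driven by the rescaled walk \(\epsilon_\Layer \iv^\Layer\), with \(\iv^\Layer\) built with the standard scaling \(\lambda_\Layer\). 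By Theorem~\ref{thm: large depth limit of resnets with correlated weights, init}, \(\iv^\Layer \overset{d}\to \cv\) in \(C^\beta\), so \((\iv^\Layer)\) is tight in \(C^\beta\). Hence \(\epsilon_\Layer\iv^\Layer \to 0\) in probability in \(C^\beta\), since \(\norm{\epsilon_\Layer \iv^\Layer}_{C^\beta} = \epsilon_\Layer \norm{\iv^\Layer}_{C^\beta}\) and the norms are bounded in probability. In the same way \(\iw^\Layer \overset{d}\to \cw\) in case (ii), and \(w^\Layer=\cw\) in case (i).

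Next I repeat the triangle-inequality argument of the proof sketch with driver \(\epsilon_\Layer\iv^\Layer\). Let \(\wZh^{\Layer,\dagger}\) solve \(d\wZh^{\Layer,\dagger}_s = \diffusion(w^\Layer_s,\wZh^{\Layer,\dagger}_s)\, d(\epsilon_\Layer\iv^\Layer)_s\) with \(\wZh^{\Layer,\dagger}_0 = Ax\). The constant path \(s\mapsto Ax\) is exactly the solution of the same equation with zero driver, for any parameter path. Corollary~\ref{cor: solution is locally Lipschitz in inputs} gives local Lipschitz dependence on the driver, with a constant depending on bounds of the Hölder norms of the driver and of the parameter path. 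This yields
\[
\norm{\wZh^{\Layer,\dagger} - Ax}_{C^\beta} \le K\bigl(\norm{\epsilon_\Layer\iv^\Layer}_{C^\beta}, \norm{w^\Layer}_{C^{\beta_w}}\bigr)\, \epsilon_\Layer\norm{\iv^\Layer}_{C^\beta}
\]
with \(K\) locally bounded. Tightness of both arguments then gives convergence to zero in probability. Note that only the parameter path \(w^\Layer\) enters; its limit is never needed. So Assumption~\ref{assmpt: additional regularity} is used only to the extent that the corollary requires it for parameter paths in \(C^{\beta_w}\).

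Finally, the Euler error is controlled by Theorem~\ref{thm: convergence of euler scheme}. The error \(\norm{\hInterp^{\Layer,\dagger}-\wZh^{\Layer,\dagger}}_{C^\beta}\) is bounded by the same kind of quantity as in the critical case: a locally bounded function of the Hölder norms of the driver and the parameter, times a mesh-dependent factor that vanishes as \(\Layer\to\infty\). This factor comes from the gap between the Hölder exponent \(\beta\) and the exponents \(\hurst-\frac1p\) and \(\beta_w\) at which the inputs are tight. The driver norm only shrinks under rescaling by \(\epsilon_\Layer\le 1\) (for large \(\Layer\)). So this term is dominated by the corresponding critical-case bound and tends to zero in probability; alternatively, I expect the error bound to be proportional to the driver norm, which gives the result directly.

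The main obstacle is this last step. One must check that the Euler-convergence estimate is uniform over drivers in bounded subsets of a slightly larger Hölder space \(C^{\beta'}\), with \(\beta<\beta'<\hurst-\frac1p\). Only then does tightness of \(\iv^\Layer\) in \(C^{\beta'}\), which comes from applying the theorem with \(\beta'\) in place of \(\beta\), transfer directly. Combining the two estimates gives \(\hInterp^{\Layer,\dagger}\overset{p}\to (s\mapsto h_0)\) in \(C^\beta\).
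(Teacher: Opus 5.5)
Your proposal is correct and is essentially the paper's argument: rewrite the network with drivers $\frac{\lambda_\Layer^\dagger}{\lambda_\Layer}v_\layer$ under the standard scaling, then rerun the Wong--Zakai/Euler decomposition from the proof of Theorem~\ref{thm: large depth limit of resnets with correlated weights, init}. The only difference is in the Wong--Zakai step: you bound $\wZh^{\Layer,\dagger}-Ax$ directly by the Lipschitz-in-driver estimate against the zero-driver solution, which uses only tightness of $w^\Layer$ and so in fact makes Assumption~\ref{assmpt: additional regularity} unnecessary here. The paper instead passes through the continuous mapping theorem and then upgrades convergence in distribution to a constant into convergence in probability.

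The ``main obstacle'' you flag is already settled by the statement of Theorem~\ref{thm: convergence of euler scheme}, whose constant $C_{\mathrm{Euler}}^{R,\alpha}$ and threshold $\tau(R,\alpha)$ are uniform over all initial conditions, drivers and parameters bounded by $R$. It is used exactly as in Step~3 of the main proof, with driver exponent $\gamma_1\in(\beta,\hurst-\frac1p)$; the same exponent shift is also needed in your Lipschitz step, where the driver must be measured in $C^{\gamma_1}$ rather than $C^\beta$.
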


\begin{proof}
    First, we will show that instead of replacing the standard scaling \(\lambda_\Layer\)
    by \(\lambda_\Layer^\dagger\) we can equivalently replace drivers
    \(v_\layer\) by \(v_\layer^\dagger =
    \frac{\lambda_\Layer^\dagger}{\lambda_\Layer} v_\layer\) and keep the
    standard scaling \(\lambda_\Layer\) to obtain the hidden states \(\hInterp^{\Layer, \dagger}_s\).
    This is a simple consequence of linearity:
    \[
        h_{\layer+1}^\dagger
        = h_\layer + \lambda_\Layer^\dagger \diffusion(w_\layer, h_\layer^\dagger) v_\layer
        = h_\layer + \lambda_\Layer \diffusion(w_\layer, h_\layer^\dagger) v_\layer^\dagger.
    \]
    In turn, we have that the interpolated sum process of the new drivers \(v_\layer^\dagger\)
    converges to zero in distribution in Hölder space:
    \[
        \iv^{\Layer, \dagger}_s
        = \sum_{\layer=0}^{\floor{\Layer s}-1} \lambda_\Layer v_\layer^\dagger + (\Layer s - \floor{\Layer s}) \lambda_\Layer v_{\floor{\Layer s}}^\dagger
        = \frac{\lambda_\Layer^\dagger}{\lambda_\Layer} \iv^\Layer_s
        \overset{d}\to 0 \eqcolon \mathscr{z}_s^\dagger,
    \]
    The convergence follows from the convergence of \(\iv^\Layer\) to \(\cv\) in Theorem \ref{thm: large depth limit of resnets with correlated weights, init}
    combined with Slutsky's theorem \citep[e.g.][Thm.~13.18]{klenkeProbabilityTheoryComprehensive2014}
    to get the joint convergence of \((\iv^\Layer, \frac{\lambda_\Layer^\dagger}{\lambda_\Layer})\) to \((\cv, 0)\) in distribution
    and an application of the continuous mapping theorem
    \citep[e.g.][Thm.~13.25]{klenkeProbabilityTheoryComprehensive2014}.
    Since we only use convergence of \(\iv^\Layer\) against a limiting process
    \(\cv\) in the proof of Theorem \ref{thm: large depth limit of resnets with
    correlated weights, init} (proven in \ref{step: driver convergence}), the remaining steps
    of the proof yield convergence of \(\hInterp^{\Layer, \dagger}\) to the
    solution of the differential equation
    \[
        d\hCont_s = \diffusion(\cw_s, \hCont_s) d\mathscr{z}_s^\dagger, \quad \hCont_0 = Ax,
    \]
    with \(\mathscr{z}_s^\dagger = 0\) for all \(s\in [0,1]\). However the solution
    to this differential equation is simply \(\hCont_s = Ax\) for all \(s\in [0,1]\).
    Since convergence in distribution against a constant implies
    convergence in probability, we also have \(\hInterp^{\Layer, \dagger}_s \overset{p}\to Ax\)
    in Hölder space.
\end{proof}

\begin{remark}[Super-critical scaling]
    \label{rem: super-critical scaling}
    The conjectured blow-up for super-critical scaling is more difficult to prove.
    However, the same argument as in the proof above may be used to show that
    the driver of the limiting differential equation is multiplied by a
    diverging factor \(\lambda_\Layer^\dagger/\lambda_\Layer\). This alone
    does not imply blow-up of the hidden states however, because
    the diffusion term \(\diffusion(\cw_s, \hCont_s)\) may suppress this
    amplification. A trivial example is \(\diffusion=0\). A more sophisticated one is
    \(\diffusion(h) = (1-\norm{h}^2)_+^2\), which prevents \(\norm{h}\) from
    exceeding \(1\). A proof of the conjectured blow-up for super-critical scaling
    therefore requires appropriate lower bounds on the activation function \(\diffusion\) and is
    left for future work. For independent initializations, \citet{marionScalingResNetsLargedepth2025}
    introduce their Assumption \(A_2\) for this purpose.
\end{remark}

\section{Discussion and experiments}

Our analysis concerns the large-depth behavior of ResNets at initialization and
rigorously resolves a conjecture of
\citet{marionScalingResNetsLargedepth2025}. During the preparation of this work,
\citet{chizatHiddenWidthDeep2026} developed a complementary analysis of the
large-depth behavior of trained ResNets under i.i.d.\ initialization. His
results show, in particular, that a scaling which is critical at initialization
need not coincide with the scaling leading to maximal local parameter updates
during training.

In this section, we briefly review the main mechanism underlying Chizat's
analysis and formulate a conjectural extension of this analysis in the correlated setting
considered in the present paper. We emphasize that Chizat's results do not
directly apply to our model, since our initialization is correlated across
layers. Nevertheless, they suggest a natural training phase diagram in which
our critical initialization regime appears as the boundary of a locally
linearized regime. This interpretation is partially supported by the
experiments described in Section~\ref{sec: experiments}.

\subsection{Critical initialization versus trainability}
\label{sec: interpretation critical initialization vs trainability}

\subsubsection{Overview of \texorpdfstring{\citet{chizatHiddenWidthDeep2026}}{Chizat [2026]}}

A non-trivial random output at initialization is not necessarily the
right criterion for choosing the scaling of a trainable model. The main object
of interest is the output after training. This distinction is emphasized by
\citet{chizatHiddenWidthDeep2026}, who organizes the large-depth behavior of
trained ResNets in a phase diagram depending on the scaling
\(\lambda_\Layer\) (his Figure~4). In this subsection, we restrict ourselves
to the setting considered by Chizat, in which the trainable parameters are
initialized independently across layers. In this framework, Chizat shows that residual scalings less
aggressive than \(\Layer^{-1}\), while remaining below the stochastic critical
scale \(\Layer^{-1/2}\), lead to what he calls the ``lazy-ODE'' regime: the
displacement of each layer's parameters vanishes and each residual layer
becomes asymptotically linear in its parameters. At the boundary scaling \(\Layer^{-1/2}\),
\citet{marionScalingResNetsLargedepth2025} prove that the random fluctuations
at initialization remain of order one and give rise to a Brownian-driven SDE
in the large-depth limit, while Chizat expects the locally linearized training
mechanism to persist at this boundary, although with a stochastic limiting
dynamics different from the lazy-ODE regime. To understand the mechanism behind this phenomenon, define
\[
    f(h_l,z_l)\coloneq \diffusion(w_l,h_l)v_l,
    \qquad
    z_l\coloneq(w_l,v_l),
\]
where \(z_l\) denotes the trainable parameters at initialization. For centered
i.i.d.\ parameters across layers, at the critical stochastic scaling
\(\lambda_\Layer=\Layer^{-1/2}\) we have
\[
    h_\Layer
    =
    h_0
    +
    \sum_{l=0}^{\Layer-1}
    \underbrace{
        \Layer^{-1/2}f(h_l,z_l)
    }_{=\bigO(\Layer^{-1/2})}
    =
    \bigO(1).
\]
Thus, the \(\Layer\) increments of order \(\Layer^{-1/2}\) sum to a quantity
of order one by a stochastic averaging effect. Without such an averaging
effect, increments would need to be of order \(\Layer^{-1}\) in order to
accumulate to a quantity of order one.

The situation is different for the changes in the parameters induced by
training. Since the training of the parameter \(z_l\) causes a highly
structured change \(\Delta z_l\) to the parameters \(z_l\), these changes
should not be expected to benefit from the same averaging effect. Using
\(h_l(t)\) for the hidden layer at training time \(t\), we thus heuristically
obtain, by Taylor expansion of \(f(h_l(t),z_l(t))\) around the initial
parameters \(z_l\),
\begin{equation}
    \label{eq: taylor expansion of resnet update}
    h_\Layer(t)
    =
    h_0
    +
    \underbrace{
        \sum_{l=0}^{\Layer-1}
        \Layer^{-1/2}f(h_l(t),z_l)
    }_{\bigO(1)}
    +
    \sum_{l=0}^{\Layer-1}
    \Layer^{-1/2}
    \partial_z f(h_l(t),z_l)\Delta z_l(t)
    +
    \sum_{l=0}^{\Layer-1}
    \bigO\!\left(
        \Layer^{-1/2}\|\Delta z_l(t)\|^2
    \right).
\end{equation}
Since the increments
\[
    \Delta z_l(t)=z_l(t)-z_l
\]
are highly structured across layers, the second term is of order one when
\(\Layer^{-1/2}\Delta z_l(t)\) is of order \(\Layer^{-1}\). This requires
\begin{equation}
    \label{eq: trained displacement of parameters}
    \Delta z_l(t)\in\bigO(\Layer^{-1/2}).
\end{equation}
Consequently, the trained displacement \(\Delta z_l(t)\) vanishes as
\(\Layer\to\infty\), while its accumulated first-order effect across the
network remains of order one. The same scaling also implies that the
quadratic remainder in \eqref{eq: taylor expansion of resnet update} is of
order \(\bigO(\Layer^{-1/2})\), and therefore vanishes asymptotically. The
first-order Taylor expansion thus suggests a locally linearized description
of the training dynamics.

This mechanism is closely related to what
\citet{chizatHiddenWidthDeep2026} calls the {\it lazy-ODE regime}, for which he
rigorously proves the vanishing of the parameter displacements and a locally linearized
limiting dynamics. As already observed, at the critical stochastic scaling
\(\lambda_\Layer=\Layer^{-1/2}\), the random fluctuations at initialization
do not vanish, so that the limiting dynamics is different from the lazy-ODE
limit; nevertheless, the same scaling argument suggests vanishing parameter
displacements also at this boundary. However, while the displacement
of each layer's parameters vanishes, the accumulated first-order effect across
depth may still induce an order-one change of the hidden representations.
Thus, unlike in the usual Neural Tangent Kernel regime, the features need not
remain asymptotically frozen during training. We therefore use the term
{\it locally linearized}, rather than {\it lazy}, for this behavior.

We observe that the empirical evidence as to whether ResNets with independent initialization
benefit more from the scaling \(\Layer^{-1/2}\) or from \(\Layer^{-1}\) does
not yet appear fully conclusive. Earlier studies observed slightly better performance
for the \(\Layer^{-1/2}\) scaling
\citep[Table~1]{shaoNormalizationIndispensableTraining2020}, whereas the study
motivating Chizat's work suggests that the \(\Layer^{-1}\) scaling may be more
beneficial \citep{deyDonBeLazy2025}.

\FloatBarrier

\subsubsection{A conjectural phase diagram}

We now put ourselves in the framework of
Theorem~\ref{thm: large depth limit of resnets with correlated weights, init}.
Fix \(\alpha\in(0,1/\hermRank)\) and Hermite rank \(\hermRank\), and set
\[
    \hurst=1-\frac{\alpha\hermRank}{2}\in(1/2,1).
\]
Thus, \(\hurst\) is the critical exponent associated with the correlation
structure by our theorem. To distinguish this critical exponent from the
scaling actually used in the network, we write
\[
    \lambda_\Layer=\Layer^{-\gamma}.
\]

Motivated by the phase diagram of
\citet[Fig.~4]{chizatHiddenWidthDeep2026} and by the discussion in the
previous subsection, we conjecture that our results fit into the broader phase
diagram represented in Figure~\ref{fig: phase diagram}. We emphasize that
the behavior of the ResNet after training and the Blow-up regime
are conjectures based on \citeauthor{chizatHiddenWidthDeep2026}'s analysis of the independent
initialization setting and Remark \ref{rem: super-critical scaling}.

\begin{figure}[t]
    \centering
    \begin{tikzpicture}[
    scale=3,
    axis/.style={->, thick},
    guide/.style={densely dotted, gray},
    ode/.style={green!50!black, ultra thick},
    sde/.style={red!90!black, ultra thick},
    lazyode/.style={yellow!50!red, very thick},
    subcritical/.style={blue!50!black, very thick},
    every node/.style={font=\small}
]

\def\xL{1.25}
\def\xK{2.5}
\def\xSqrtL{3.25}
\def\yk{0.4}
\def\xmax{4}

\draw[axis] (0,0) -- (\xmax,0) node[below right] {Scaling};
\draw[axis] (0,0) -- (0,1.15) node[left, text width=2.2cm] {Initialization correlation};

\draw[guide] (0,\yk) -- (\xK,\yk);
\draw[] (0,1) -- (\xmax,1);
\draw[guide] (\xK,0) -- (\xK,\yk);

\foreach \x/\label in {
    0/{0},
    \xL/{L^{-1}},
    \xK/{L^{-\hurst}},
    \xSqrtL/{L^{-1/2}}
}{
    \draw[thick] (\x,0.015) -- (\x,-0.015)
        node[below] {$\label$};
}

\node[left] at (0,0) {$\rho(k)=0$};
\node[left] at (0,\yk) {$\rho(k)=k^{-\alpha}$};
\node[left] at (0,1) {$1$};

\fill[magenta, opacity=0.12]
    (\xL,1) -- (\xmax,1) -- (\xmax,0) -- (\xSqrtL,0) -- cycle;

\node[magenta!90!black] at (\xSqrtL,0.6) {Blow-up};

\fill[lazyode, opacity=0.3]
    (\xL,1) -- (\xSqrtL,0) -- (\xL, 0) -- cycle;

\fill[subcritical, opacity=0.15]
    (0,1) -- (\xL,1) -- (\xL,0) -- (0,0) -- cycle;

\draw[ode] (\xL,0) -- (\xL,1);
\draw[sde] (\xL,1) -- (\xSqrtL,0);

\node[ode, above, fill=green!6!white, rounded corners, draw=green!50!black, very thin] at (\xL,0.98) {Critical MLU};
\node[sde, fill=red!15!white, rounded corners,draw=red!50!black,very thin] at (\xSqrtL-0.25,0.20) {Critical SDE};
\node[lazyode, text width=2.7cm] at (\xK-0.7,\yk) {Locally\\linearized ODE};
\node[subcritical, text width=2.7cm] at (\xL/2,\yk) {Maximal local updates (MLU)};

\end{tikzpicture}     \caption{\small Conjectural phase diagram for the ResNet model in the framework of
    Theorem~\ref{thm: large depth limit of resnets with correlated weights, init},
    with \(\hurst = 1-\frac{\alpha\hermRank}{2}\) and
    \(\alpha\in(0,1/\hermRank)\). The behavior during training and the Blow-up
    regime are conjectures.}
    \label{fig: phase diagram}
\end{figure}

\begin{itemize}[noitemsep]
    \item \textbf{Blow up.\footnote{Conjecture, see Rem.~\ref{rem: super-critical scaling}}}
    If \(\gamma<\hurst\), the scaling is larger than the critical scale
    associated with the prescribed correlation structure, and we expect the
    ResNet to blow up at initialization subject to suitable
    non-degeneracy assumptions about \(\diffusion\) (Rem.~\ref{rem: super-critical scaling}). This agrees with the independent
    case, where the corresponding threshold is \(1/2\).

    \item \textbf{Critical SDE (Non-trivial initialization).\footnote{The behavior after training is conjecture based on Chizat's analysis for independent initialization.\label{fn: training is conjecture}}}
    If
    \[
        \gamma=\hurst=1-\frac{\alpha\hermRank}{2},
    \]
    then
    Theorem~\ref{thm: large depth limit of resnets with correlated weights, init}
    proves that the ResNet converges at initialization to the non-trivial
    limiting differential equation driven by the corresponding Hermite
    process. Based on Chizat's analysis of the training we conjecture that the
    parameter changes should be of order \(\Layer^{\hurst-1}\), and hence
    vanish, while their accumulated first-order effect remains of order one. We
    therefore expect the training dynamics to be locally linearized. This would
    extend the ``Lazy SDE'' regime of \citet[Fig.~4]{chizatHiddenWidthDeep2026}
    to the correlated setting.

    \item \textbf{Locally linearized ODE.}\textsuperscript{\ref{fn: training is conjecture}}
    If \(\hurst<\gamma<1\), the residual scaling is smaller than the critical
    initialization scale, and we prove that the ResNet converges to the identity
    at initialization (Corollary~\ref{cor: trivial limit for sub-critical scaling}). At the same time, extrapolating Chizat's argument
    suggests parameter changes of order \(\Layer^{\gamma-1}\), which vanish
    as the depth diverges. To see this, combine the scaling \(\lambda_\Layer=\Layer^{-\gamma}\)
    with the Taylor expansion \eqref{eq: taylor expansion of resnet update}.
    We therefore conjecture a locally linearized ODE
    regime, analogous to the ``Lazy ODE'' regime of
    \citet[Fig.~4, Thm.~2]{chizatHiddenWidthDeep2026}.

    \item \textbf{Critical maximal local updates.}\textsuperscript{\ref{fn: training is conjecture}}
    At \(\gamma=1\), the preceding scaling argument predicts parameter changes
    of order one. This corresponds to the ``Maximum local update'' regime of
    \citet[Thm.~1]{chizatHiddenWidthDeep2026}. Here, maximal local updates
    means that the local features generated by an individual residual block
    may change by order one during training while the overall dynamics remains
    stable. We conjecture that an analogous regime persists under our
    correlated initialization. The ResNet is generally scaled to be the
    identity function at initialization.

    \item \textbf{Maximal local updates (Subcritical ODE).}\textsuperscript{\ref{fn: training is conjecture}}
    For \(\gamma>1\), the residual scaling is even smaller. In Chizat's
    setting, the corresponding subcritical MLU regime asymptotically
    coincides with the behavior of a network initialized with zero output
    weights \citep[Remark~4.2]{chizatHiddenWidthDeep2026}. By analogy, we
    conjecture a similar subcritical ODE behavior in our setting.

\end{itemize}

In summary, at initialization Theorem~\ref{thm: large depth limit of resnets with correlated weights, init}
and Corollary~\ref{cor: trivial limit for sub-critical scaling} rigorously characterize
the critical SDE curve 
\[
    \gamma=\hurst=1-\frac{\alpha\hermRank}{2}
\]
and its subcritical side in Figure~\ref{fig: phase diagram}.
The blow up-region, and the subdivision of the sub-critical region
into the Locally linearized ODE, Critical MLU and MLU regimes
represent a conjectural extension of Chizat's training phase diagram to
correlated initialization. The resulting picture suggests, in particular, that
criticality at initialization need not coincide with maximal local updates
during training.

\subsection{Experiments}
\label{sec: experiments}

To connect our main results to a training setting, we
modify the experiments of
\citet[Fig.~9]{marionScalingResNetsLargedepth2025} and reproduce their
experiment on trained ResNets with correlated initialization.\footnote{
    Our fork of their code is available at \url{https://github.com/FelixBenning/scaling-resnets}.
}
We keep the same architecture of a ResNet of width \(\dims=30\) and depth \(\Layer=1000\)
with ReLU activation function and without inner weights \(w_\layer\) trained on the MNIST dataset.  While they
initialized the inner weights \(v_\layer\) directly with increments of a
fractional Brownian motion, we use the Cauchy correlation function
\[
    \rho(k)=(1+k)^{-\alpha},
\]
which has the regularly varying behavior considered in our theoretical
framework. Besides the identity feature function
\(\feature(x)=x\), of Hermite rank \(1\), we also use the second Hermite
polynomial \(\feature(x)=x^2-1\), of Hermite rank \(2\). The results are shown
in Figure~\ref{fig: training experiments}.

\begin{figure}
    \centering
    \includegraphics*[width=0.45\textwidth]{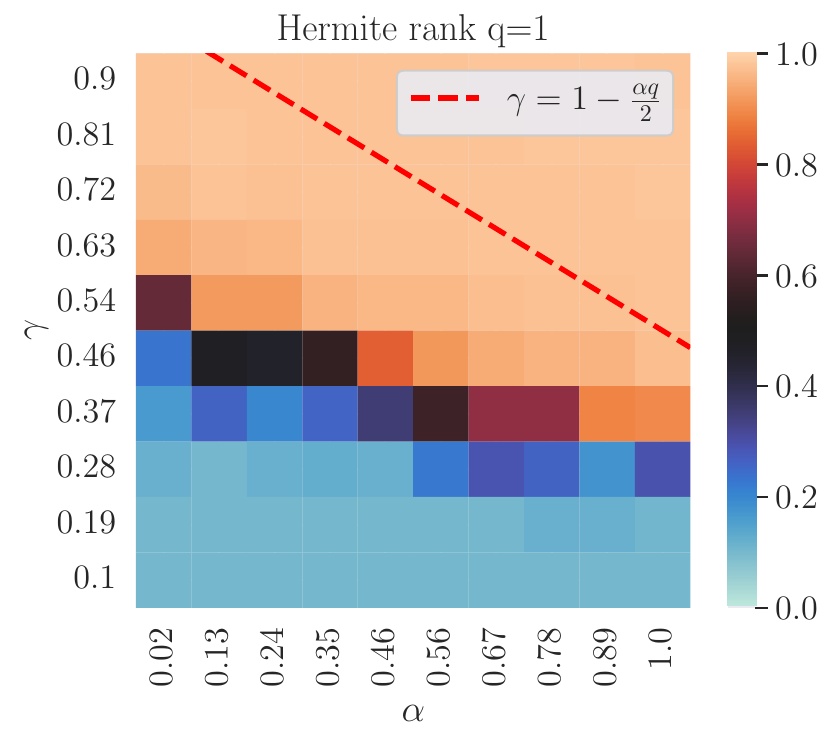}
    \includegraphics*[width=0.45\textwidth]{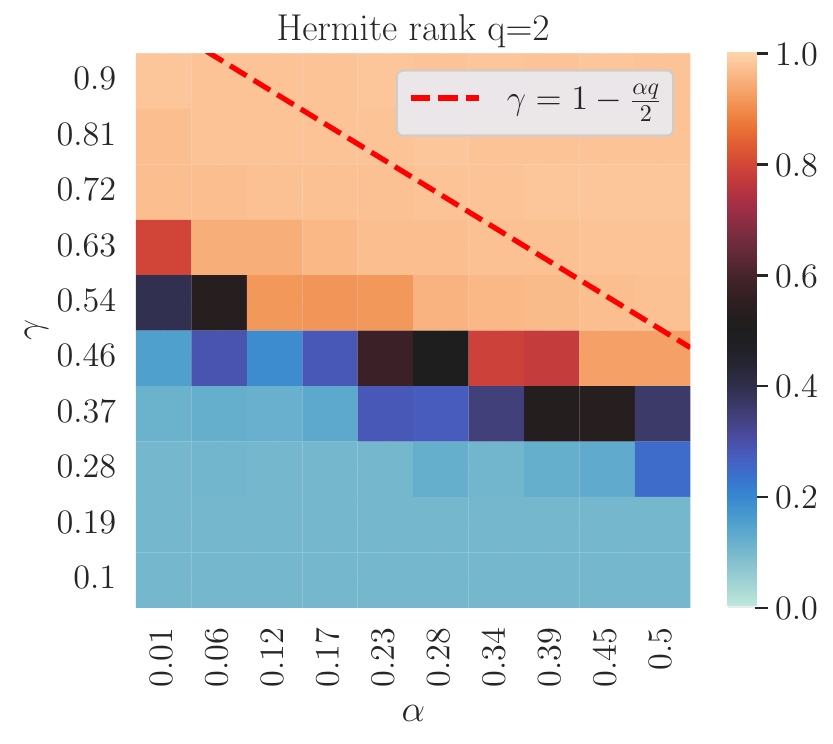}
    \caption{
    \label{fig: training experiments}
    \small The plots show the accuracy after ten epochs of
    training of a ResNet on the MNIST dataset. The parameter \(\gamma\) is the
    exponent of the scaling
    \(\lambda_\Layer=\Layer^{-\gamma}\), while \(\alpha\) is the index of the
    regularly varying correlation of the initialization process. The dashed
    red line corresponds to the critical exponent
    \(\gamma=1-\frac{\alpha\hermRank}{2}\) identified by
    Theorem~\ref{thm: large depth limit of resnets with correlated weights, init}.
    The left plot uses the identity \(x\mapsto x\) as feature function, with
    Hermite rank \(\hermRank=1\), whereas the right plot uses the second Hermite
    polynomial \(x^2-1\), with Hermite rank \(\hermRank=2\).
    }
\end{figure}

\smallskip 

For both Hermite rank one and Hermite rank two, the low-accuracy region in blue
lies predominantly below the critical curve
\[
    \gamma=1-\frac{\alpha\hermRank}{2}
\]
identified by
Theorem~\ref{thm: large depth limit of resnets with correlated weights, init}.
This region below the curve represents the conjectured blow-up region.
Larger exponents resulting in sub-critical scaling often still yield good performance. 
Curiously, the slope of the transition between poor and successful training 
neither matches the slope of the critical curve for non-trivial initialization,
nor is it horizontal. A horizontal border may be expected if maximal
local updates fully determined training behavior. In that case the outcome
should improve as the scaling approaches \(\Layer^{-1}\).
Our experiments therefore cannot establish whether the critical initialization
scaling or the maximal-local-update scaling \(\lambda_\Layer=\Layer^{-1}\) is
the more favorable choice for training.
\FloatBarrier

\section{Young integral equation solution theory}
\label{sec: young integral equation solution theory}

The goal of this section is to develop a general solution theory for
differential equations of the form
\[
    dx_t = \sigma(t,w_t, x_t) \, dg_t
    \quad\text{with initial condition}\quad x_{\ul{t}} = a \in \banachSpace[X],
\]
where \(g_t\) and \(w_t\) are \(\beta\) and \(\alpha\)-Hölder continuous functions with
exponent \(\beta\in (\frac12,1]\) and \(\alpha\in (\frac12, \beta)\). This means that classical ODE theory does
not apply as the driving signal \(g\) is not necessarily differentiable or of
bounded variation. However it is still smooth enough for Young integration
theory to be applicable. For \(\beta \le \frac12\) it would become necessary to
use rough path theory instead of Young integration \citep[see e.g.][]{frizCourseRoughPaths2020,frizEulerEstimatesRough2008}. Differential
equations of the form
\[
    dx_t = \sigma(x_t)\, dg_t
\]
with Lipschitz continuous \(\sigma\) are already well understood both
in the Young regime as well as in the rough path regime
\citep[e.g.][Chapter~8]{frizCourseRoughPaths2020}. Our
contribution is to extend this theory to non-homogeneous \(\sigma\) and prove
stability results with respect to \(g\), \(w\) and initial conditions \(a\).
An application of this theory is the infinite depth limit
of residual neural networks as described in Section \ref{sec: resnet limit}.

To distinguish between functional norms and norms on vectors, we use
\(\abs{\cdot}\) as notation for the norm of vectors and reserve \(\norm{\cdot}\)
for functional norms. Of course a vector in a general Banach space
may be a function. Since we will however not use its properties as a function
this distinction still helps make the concepts clearer.
Moreover we typically write \(f_s\) for function
evaluation at \(s\) to reserve
\(f(x)\) for functions that map a function \(x\) to another function.

\begin{definition}
    Let \((\banachSpace, \abs{\cdot})\) be a Banach space, and let $\alpha\in (0, 1]$.
    For a function \(f\colon [\ul{t}, \ol{t}] \to \banachSpace\)
    with \(\ul{t}, \ol{t} \in \real\) and  \(\ul{s}, \ol{s} \in [\ul{t},
    \ol{t}]\) we define the Hölder seminorm
    \[
        \holder{f}_{\alpha, [\ul{s},\ol{s}]} \coloneq \sup_{s\neq t \in [\ul{s}, \ol{s}]} \frac{\abs{f_t - f_s}}{\abs{t-s}^\alpha}
        \qquad\text{and}\qquad
        \holder{f}_{\alpha} \coloneq \holder{f}_{\alpha, [\ul{t}, \ol{t}]}.
    \]
    We further define the Hölder norm
    \[
        \norm{f}_{\alpha, [\ul{s}, \ol{s}]} \coloneq \norm{f}_{\infty, [\ul{s}, \ol{s}]} + \holder{f}_{\alpha, [\ul{s}, \ol{s}]}
        \qquad\text{and}\qquad
        \norm{f}_\alpha \coloneq \norm{f}_{\alpha, [\ul{t}, \ol{t}]}
    \]
    where \(\norm{f}_{\infty, [\ul{s}, \ol{s}]} = \sup_{s \in [\ul{s}, \ol{s}]} \abs{f_s}\) is the supremum norm of \(f\) on \([\ul{s}, \ol{s}]\).
\end{definition}

\begin{definition}[Functional spaces]\label{d:functionalspaces}
    Let \(\banachSpace[W]\) be a Banach space, let
    \(T = [\ul{t}, \ol{t}]\subseteq [0, \infty)\) be a compact interval, and let
    \(\beta\in(0,1]\). We denote by
    \[
        C^\beta(T,\banachSpace[W])
        \coloneq
        \left\{
            w\colon T\to\banachSpace[W]
            \,:\,
            \norm{w}_{\beta}<\infty
        \right\}
    \]
    the space of \(\beta\)-Hölder continuous functions from \(T\) to
    \(\banachSpace[W]\), equipped with the Hölder norm
    \(\norm{\cdot}_{\beta}\) defined above.
\end{definition}

For the differential equation \(dx_t = \sigma(t,w_t, x_t) \, dg_t\)
to have a unique solution, we require the following assumption, that is a
generalization of Assumptions \ref{assmpt: regularity of the activation function}
and \ref{assmpt: additional regularity} to the non-homogeneous case.

\begin{assumption}[Sufficiently nice function]
    \label{assmpt: sufficiently nice function}
    Let \(\banachSpace[W]\), \(\banachSpace[V]\) and \(\banachSpace[X]\) be Banach spaces
    and let \(\alpha\in (0,1]\). Then, the mapping
    \[
        \sigma\colon \real \times \banachSpace[W] \times \banachSpace[X]
        \to \linOp{\banachSpace[V]}{\banachSpace[X]}
    \]
    is an {\it \(\alpha\)-nice function} if the following properties are verified:
    \begin{enumerate}[label=(\alph*)]
        \item\label{it: (nice function) bounded}
        \(\sigma\) is \textbf{bounded}, that is \(\norm{\sigma}_\infty < \infty\).

        \item\label{it: (nice function) locally Lipschitz}
        \textbf{Local Lipschitz continuity:}
        \(\sigma(t, w, x)\) and the Fréchet derivative
        \[
            \frechet_x \sigma(t, w, x)
            \in
            \linOp{\banachSpace[X]}
            {\linOp{\banachSpace[V]}{\banachSpace[X]}}
        \]
        are locally Lipschitz continuous in \(w\) and \(x\),
        with local Lipschitz coefficients controlled by a continuous function
        \(c\colon \real^3 \to [0,\infty)\):
        \[
            \begin{aligned}
            \abs{\sigma(t, w, x) - \sigma(t, \tilde w, \tilde x)}
            &\le c(t, \abs{w}, \abs{\tilde w})
            \Bigl(
                \abs{x-\tilde x}
                + (1+\abs{x}+\abs{\tilde x})\abs{w-\tilde w}
            \Bigr)
            \\
            \abs{\frechet_x \sigma(t, w, x)
            - \frechet_x \sigma(t, \tilde w, \tilde x)}
            &\le
            \underbrace{
                c(t, \abs{w}, \abs{\tilde w})
            }_{\text{locally bounded}}
            \underbrace{
                \Bigl(
                    \abs{x-\tilde x}
                    + (1+\abs{x}+\abs{\tilde x})\abs{w-\tilde w}
                \Bigr)
            }_{\text{local Lipschitz control}} .
            \end{aligned}
        \]

        \item\label{it: (nice function) locally Hölder continuous}
        \textbf{Hölder continuity in \(t\):}
        \(\sigma(\cdot, w, x)\) and \(\frechet_x \sigma(\cdot, w, x)\)
        are locally \(\alpha\)-Hölder continuous in \(t\),
        with local Hölder coefficients controlled by a continuous function
        \(\mathsf c\colon\real \to [0,\infty)\):
        \[
            \begin{aligned}
            \abs{\sigma(t, w, x) - \sigma(s, w, x)}
            &\le
            \mathsf{c}(\abs{w})(1+\abs{x})\abs{t-s}^{\alpha}
            \\
            \abs{\frechet_x \sigma(t, w, x)
            - \frechet_x \sigma(s, w, x)}
            &\le
            \underbrace{
                \mathsf{c}(\abs{w})(1+\abs{x})
            }_{\text{locally bounded}}
            \abs{t-s}^{\alpha}.
            \end{aligned}
        \]
    \end{enumerate}

    Finally, an \textbf{optional} assumption on the Fréchet derivative of \(\sigma\)
    with respect to \(w\) instead of \(x\) is given by
    \begin{enumerate}[label=(\alph*), resume]
        \item\label{it: extra assumption}
        \textbf{Differentiability in \(w\):}
        there exists a continuous function
        \(\tilde c \colon \real^5 \to [0,\infty)\) and
        \(\tilde{\mathsf c} \colon \real^2 \to [0,\infty)\) such that
        the Fréchet derivative
        \[
            \frechet_w \sigma(t, w, x)
            \in
            \linOp{\banachSpace[W]}
            {\linOp{\banachSpace[V]}{\banachSpace[X]}}
        \]
        is locally Lipschitz continuous in the sense
        \begin{align}
            \abs{\frechet_w \sigma(t, w, x)
            - \frechet_w \sigma(t, \tilde w, \tilde x)}
            &\le
            \tilde c(t, \abs{w}, \abs{\tilde w}, \abs{x}, \abs{\tilde x})
            \Bigl(
                \abs{x-\tilde x} + \abs{w-\tilde w}
            \Bigr)
            \\
            \abs{\frechet_w \sigma(t, w, x)
            - \frechet_w \sigma(s, w, x)}
            &\le
            \tilde{\mathsf c}(\abs{w}, \abs{x}) \abs{t-s}^\alpha .
        \end{align}
    \end{enumerate}
\end{assumption}

\begin{remark}[Merging \(t\) into \(w\)]
    While we assume local Lipschitz continuity in \(w\), we only assume 
    local Hölder continuity in \(t\). For this reason, it is not trivial to
    merge \(t\) into \(w\). If one were to embed \(t\) into
    the function space \(L^p([0,T], \real)\) with \(p=\frac1\alpha\) via
    \(t\mapsto \ind_{[0,t]}\), then Lipschitz continuity translates to 
    \(\alpha\)-Hölder continuity in \(t\) as \(\norm{\ind_{[0,t]}-\ind_{[0,s]}}_{L^p} = \abs{t-s}^\alpha\), such that in place of \((t,w)\) one
    may consider the parameter \(\tilde w = (\ind_{[0,t]}, w)\). This may allow for
    an alternative proof where \(t\) is merged into \(w\), but the translation is a bit
    awkward and we choose the more direct approach in the following.
\end{remark}

Theorem~\ref{thm: differential equation solution existence and uniqueness} below
is one of the main theoretical contributions of the paper.

\begin{remark}
For the reader's convenience, we formally clarify the meaning of the differential equation
\eqref{e:sdeb} below; see e.g. \cite{frizEulerEstimatesRough2008,frizCourseRoughPaths2020} for further details. A path
\(
    x\in C^\alpha([\ul{t},\ol{t}],\banachSpace[X])
\)
is a solution of \eqref{e:sdeb} if
\[
    x_t
    =
    a+\int_{\ul{t}}^t \sigma(s,w_s,x_s)\,dg_s,
    \qquad t\in[\ul{t},\ol{t}],
\]
where the integral is understood in the Young sense.
More precisely,
\[
    \int_{\ul{t}}^t \sigma(s,w_s,x_s)\,dg_s
    \coloneq
    \lim_{\abs{\pi}\to0}
    \sum_{[u,v]\in\pi}
    \sigma(u,w_u,x_u)(g_v-g_u),
\]
where \(\pi\) ranges over partitions of \([\ul{t},t]\) and
\(\abs{\pi}\) denotes the mesh of the partition.
Under the assumptions
of Theorem~\ref{thm: differential equation solution existence and uniqueness},
the map
\(
    s\mapsto \sigma(s,w_s,x_s)
\)
is \(\alpha\)-Hölder continuous, while \(g\) is \(\beta\)-Hölder
continuous, and \(\alpha+\beta>1\); hence the Young integral above is
well defined. Whenever \eqref{e:sdeb} admits a unique solution for every initial
condition, one defines the \emph{solution flow} 
\[
    \flow(\,\cdot\,; s,t)\colon \banachSpace[X]\to\banachSpace[X],
    \qquad 0\le s\le t\le T,
\]
by \(\flow(a; s,t) \coloneq x_t^{s,a}\),
where \(x^{s,a}\) denotes the unique solution of \eqref{e:sdeb} starting
from \(a\in\banachSpace[X]\) at time \(s\), that is,
\[
    x_r^{s,a}
    =
    a+\int_s^r \sigma(u,w_u,x_u^{s,a})\,dg_u,
    \qquad r\in[s,T].
\]
By uniqueness the flow satisfies
\[
    \flow(a; s,s) = a,
    \qquad
    \flow(\,\cdot\,; s,t)
    = \flow(\,\cdot\,; u,t) \circ \flow(\,\cdot\,; s,u),
    \qquad 0\le s\le u\le t\le T.
\]
\end{remark}

\begin{mdframed}[innertopmargin=0pt]
\begin{theorem}[Differential equation solution]
    \label{thm: differential equation solution existence and uniqueness}
    For \(\beta\in (\frac12, 1]\), let \(\alpha \in (\frac12, \beta)\) and assume \(g\in C^\beta([0,T],
    \banachSpace[V])\), \(w \in C^\alpha([0,T], \banachSpace[W])\) and let
    \(\sigma\colon \real \times \banachSpace[W] \times \banachSpace[X] \to \linOp{\banachSpace[V]}{\banachSpace[X]}\)
    be an \(\alpha\)-nice function (Assumption~\ref{assmpt: sufficiently nice function}).
    Then, for any \(0\le \ul{t}<\ol{t}\le T\) the differential equation 
    \begin{equation}\label{e:sdeb}
        dx_t = \sigma(t,w_t, x_t) \, dg_t
        \quad\text{with initial condition}\quad x_{\ul{t}} = a \in \banachSpace[X] 
    \end{equation}
    \begin{enumerate}[label=(\roman*), font=\upshape]
        \item\label{it: existence and uniqueness} has a \ul{unique} solution
        \(x\) with \(x \in C^\alpha([\ul{t},\ol{t}], \banachSpace[X])\).
    \end{enumerate}
For any \(R>0\)
    there exist constants
    \[
        C_{\text{flow}}^R, C_{\text{flow,loc}}^R, C_{\text{init}}^R, C_{\text{driver}}^R, C_{\text{param}}^R
        > 0
    \]
    such that for all initial conditions
    \(a,b \in B(0,R)\), all driving signals
    \(g, \tilde g \in C^\beta([0,T], \banachSpace[V])\)
    with \(\holder{g}_\beta, \holder{\tilde g}_\beta \le R\),
    all \(w, \tilde w\in C^\alpha([0,T], \banachSpace[W])\) with
    \(\norm{w}_\alpha, \norm{\tilde w}_\alpha \le R\) and all \(0\le \ul{t}< \ol{t} \le T\) we have
    \begin{enumerate}[label=(\roman*), font=\upshape, resume]
        \item\label{it: flow bound}
        \textbf{Local flow bound:}
        Let \(\flow\) be the flow starting at \(a\in \banachSpace[X]\) in time \(\ul{t}\) (the solution path), then we have
        \begin{align}
            \label{eq: flow bound}            
            \norm{\flow(a; \ul{t}, \cdot)}_{\alpha} &\le C_{\text{flow}}^R
            \\
            \holder{\flow(a; \ul{t}, \cdot)}_{\alpha, [\ul{t}, \ol{t}]}
            &\le C_{\text{flow,loc}}^R \abs{\ol{t} - \ul{t}}^{\beta - \alpha}
            &\forall a \in B(0,R), 0 \le \ul{t}< \ol{t} \le T.
        \end{align}

        \item\label{it: lipschitz in initial condition}
        \textbf{Local Lipschitz continuity in the initial condition:} Let \(x\) be the
        solution to the differential equation \eqref{e:sdeb} with initial condition \(a\), and
        \(y\) be the solution to \eqref{e:sdeb} with initial
        condition \(b\) and the same driving signal \(g\) with \(\holder{g}_\beta < R\), then
        \begin{equation}
            \label{eq: init cond lipschitz bound}
            \norm{x-y}_{\alpha, [\ul{t}, \ol{t}]} \le C_{\text{init}}^R \abs{a-b}
            \qquad \forall a,b \in B(0,R), 0 \le \ul{t}< \ol{t} \le T.
        \end{equation}

        \item\label{it: lipschitz in driving signal}
        \textbf{Local Lipschitz continuity in the driving signal:} Let \(x\) be the solution
        to the differential equation \eqref{e:sdeb} with driving signal \(g\), and \(y\) be the
        solution to \eqref{e:sdeb} with driving signal \(\tilde g \in
        C^\beta([0,T], \banachSpace[V])\)
        and the same initial condition \(a\in B(0,R)\), then
        \[
            \norm{x-y}_{\alpha, [\ul{t}, \ol{t}]} \le C_{\text{driver}}^{R} \holder{g-\tilde g}_\beta
            \qquad \forall g, \tilde g \in C^\beta([0,T], \banachSpace[V]): \holder{g}_\beta, \holder{\tilde g}_\beta \le R.
        \]
    \end{enumerate}
    And with the optional Assumption \ref{assmpt: sufficiently nice function} \ref{it: extra assumption}
    \begin{enumerate}[resume, label=(\roman*)]
        \item\label{it: lipschitz in parameters}
        \textbf{Local Lipschitz continuity in the parameters:}
        Let \(x\) be the solution to the differential equation \eqref{e:sdeb} with parameters
        \(w\), and \(y\) be the solution to \eqref{e:sdeb} with
        parameters \(\tilde w\) and the same initial condition \(a\in B(0,R)\) and the
        same driving signal \(g\) with \(\holder{g}_\beta < R\), then
        \[
            \norm{x-y}_{\alpha, [\ul{t}, \ol{t}]} \le C_{\text{param}}^R \norm{w-\tilde w}_\alpha
            \qquad \forall w, \tilde w \in C^\alpha([0,T], \banachSpace[W]): \norm{w}_\alpha, \norm{\tilde w}_\alpha \le R.
        \]
    \end{enumerate}
\end{theorem}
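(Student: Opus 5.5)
The plan is a Picard-type fixed point argument on short intervals, using the Young--Lo\`eve estimate as the only analytic tool, followed by concatenation over a grid whose mesh depends only on \(R\). Boundedness of \(\sigma\) is what gives a global a priori bound. The basic inequality, for \(y\in C^\alpha\) and \(g\in C^\beta\) with \(\alpha+\beta>1\), is
\[
    \Bigl|\int_s^t y_u\,dg_u - y_s(g_t-g_s)\Bigr|
    \le C_{\alpha,\beta}\holder{y}_{\alpha,[s,t]}\holder{g}_{\beta}\abs{t-s}^{\alpha+\beta}.
\]
From it,
\[
    \holder[\Big]{\int y\,dg}_{\alpha,[s,t]}
    \le \holder{g}_\beta\bigl(\norm{y}_{\infty}+C\holder{y}_{\alpha}\abs{t-s}^{\alpha}\bigr)\abs{t-s}^{\beta-\alpha}.
\]
For a path \(x\), set \(y_u=\sigma(u,w_u,x_u)\). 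Assumption~\ref{assmpt: sufficiently nice function}(b),(c) then give
\[
    \holder{y}_{\alpha,[s,t]}\le c_R\bigl(1+\holder{x}_{\alpha,[s,t]}+(1+\norm{x}_\infty)\holder{w}_\alpha\bigr).
\]
Here \(c_R\) is uniform because \(\norm{w}_\infty\le R\) and \(c,\mathsf c\) are continuous.

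\textbf{Existence, uniqueness and the flow bound.} Fix \(a\in B(0,R)\) and consider \(\Gamma(x)_t=a+\int_{s}^t\sigma(u,w_u,x_u)\,dg_u\) on \([s,s+\delta]\). I would restrict \(\Gamma\) to the set \(\{x: x_s=a,\ \holder{x}_{\alpha,[s,s+\delta]}\le 1\}\). On this set \(\norm{x}_\infty\le R+1\). Using \(\norm{\sigma}_\infty\), the estimate above gives \(\holder{\Gamma(x)}_\alpha\le C_R\delta^{\beta-\alpha}\), so \(\Gamma\) maps the set into itself once \(\delta\) is small.

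For contraction, write
\[
    \sigma(u,w_u,x_u)-\sigma(u,w_u,\tilde x_u)=\int_0^1 \frechet_x\sigma\bigl(u,w_u,\tilde x_u+\theta(x_u-\tilde x_u)\bigr)\,d\theta\,(x_u-\tilde x_u).
\]
The local Lipschitz and H\"older control of \(\frechet_x\sigma\) bounds the \(\alpha\)-H\"older seminorm of this difference by \(C_R\norm{x-\tilde x}_{\alpha}\). Hence \(\holder{\Gamma x-\Gamma\tilde x}_\alpha\le C_R\delta^{\beta-\alpha}\holder{x-\tilde x}_\alpha\), which is a contraction for \(\delta=\delta(R)\) small; this is the step where the derivative assumption is needed.

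The crucial point is that \(\delta\) depends on \(R\) only through \(\norm{\sigma}_\infty\), \(\holder{g}_\beta\) and \(\norm{w}_\alpha\), and not on \(\abs{a}\) beyond a local bound. Along the solution, \(\abs{x_t-x_s}\le \norm{\sigma}_\infty R\abs{t-s}+\dots\), so \(\norm{x}_\infty\le R+C(T,R)\) a priori. Applying the local step with the enlarged radius \(R'=R+C(T,R)\), finitely many steps of length \(\delta(R')\) cover \([\ul t,\ol t]\). Gluing the pieces gives the global solution and hence (i).

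For uniqueness, any \(C^\alpha\) solution has finite seminorm. On intervals short enough that its seminorm is at most \(1\), it lies in the invariant set, so it coincides with the fixed point there. The local estimate yields \(\holder{x}_{\alpha,[\ul t,\ol t]}\le C\abs{\ol t-\ul t}^{\beta-\alpha}\) when \(\ol t-\ul t\le\delta\). For longer intervals I would use subadditivity of H\"older seminorms over the \(N(R)\) pieces, \(\holder{x}_{\alpha,[s,u]}\le \holder{x}_{\alpha,[s,v]}+\holder{x}_{\alpha,[v,u]}\), together with \(\abs{\ol t-\ul t}^{\beta-\alpha}\ge\delta^{\beta-\alpha}\). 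This gives (ii).

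\textbf{Stability estimates (iii)--(v).} For each, set \(e=x-y\) and write \(e_t=e_s+\int_s^t(\sigma(u,w_u,x_u)-\sigma(u,\tilde w_u,y_u))\,dg_u+R_t\).
\begin{itemize}
    \item For the initial condition, the remainder term is absent.
    \item For the driver, it is \(\int\sigma(u,\tilde w_u,y_u)\,d(g-\tilde g)_u\), bounded by \(C_R\holder{g-\tilde g}_\beta\delta^{\beta-\alpha}\).
    \item For the parameters, the linearization uses \(\frechet_w\sigma\) and Assumption~\ref{assmpt: sufficiently nice function}(d). This gives \(\holder{\sigma(\cdot,w,y)-\sigma(\cdot,\tilde w,y)}_\alpha\le C_R\norm{w-\tilde w}_\alpha\), since a product of H\"older paths is H\"older.
\end{itemize}
By the same linearization via \(\frechet_x\sigma\), on each short interval \(\holder{e}_{\alpha}\le C\delta^{\beta-\alpha}(\abs{e_s}+\holder{e}_\alpha)+C\,\epsilon\), where \(\epsilon\) is the relevant input distance. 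Absorbing the \(\holder{e}_\alpha\) term gives \(\norm{e}_{\alpha,[s,s+\delta]}\le K(\abs{e_s}+\epsilon)\). Iterating over \(N(R)\) intervals gives a discrete Gronwall bound \(K^N(\abs{a-b}+\epsilon)\).

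The main obstacle is the uniform-in-\(R\) bookkeeping. The Lipschitz constants of \(\frechet_x\sigma\) grow with \(\abs{x}\), and the products \((1+\abs{x}+\abs{\tilde x})\abs{w-\tilde w}\) must stay controlled. The a priori sup bound from boundedness of \(\sigma\) is what makes the step size \(\delta\) and the constants depend only on \(R\), so I would establish that bound first.
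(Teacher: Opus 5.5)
Your outline is correct. Its skeleton matches the paper's: a contraction on short intervals for a fixed starting point, gluing, and, for (iii)--(v), absorption of \(\holder{x-y}_\alpha\) on short intervals followed by a discrete Gronwall over boundedly many pieces.

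The genuine difference is how you stop the growth of \(\norm{x}_\infty\) from forcing ever shorter steps.
\begin{itemize}
\item The paper proves no a priori bound on solutions. It chains balls \(B_k(\eta)\) of fixed H\"older radius over intervals of harmonic length \(\tau_0/(k+1)\). Inside the chain it tracks the worst-case sup-norm growth \(c(1+h(k+1))\), with \(h(n)=n^{1-\alpha}\log(n)^\alpha\), and offsets it by the shrinking lengths; this is where \(\alpha>\frac12\) enters, via \(K_h<\infty\).
\item In the paper, uniqueness comes from rerunning the construction with \(\eta\) at least the seminorms of two given solutions. The local flow bound comes from showing directly that the solution is \(\beta\)-H\"older on the whole interval.
\item You instead bound every solution a priori and then run the local step once, at the enlarged radius, with a single step size. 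Uniqueness comes from shrinking intervals rather than enlarging the ball, and (ii) from subadditivity of seminorms.
\end{itemize}
Your route is more modular and gives simpler constants. It also leans less on boundedness of \(\sigma\). Assumptions (b)--(c) already give \(\abs{\sigma(t,w,x)}\le C_R(1+\abs{x})\) for \(\abs{w}\le R\) and \(t\in[0,T]\). With this in place of \(\norm{\sigma}_\infty\), your a priori bound, self-map and stability steps seem to go through unchanged. The paper, by contrast, uses (a) exactly in the self-map step of its chain, as its remark on boundedness notes.

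Two steps need an extra line.

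First, the a priori bound does not follow from boundedness alone. The Young remainder contains \(\holder{\sigma(\cdot,w,x)}_\alpha\), which by (b)--(c) is of order \(1+\holder{x}_\alpha+\norm{x}_\infty\). Your displayed bound for \(\holder{y}_\alpha\) drops the term \(\mathsf c(\abs{w})(1+\norm{x}_\infty)\) coming from (c). To close the bound:
\begin{itemize}
\item absorb \(\holder{x}_{\alpha,[s,s+\delta_0]}\) on intervals of a length \(\delta_0\) depending only on \(R\);
\item deduce \(\abs{x_{s+\delta_0}}\le(1+C\delta_0^{\alpha+\beta})\abs{x_s}+C\delta_0^{\beta}\);
\item iterate over the \(\lceil T/\delta_0\rceil\) steps.
\end{itemize}
This closes because \(c\) and \(\mathsf c\) depend only on \(t\) and the size of \(w\), with \(\abs{x}\) entering linearly. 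Apply it on \([\ul{t},t_k]\) to the solution glued so far, to get \(\abs{x_{t_k}}\le R'\) before each new step.

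Second, in the uniqueness argument, a general \(C^\alpha\) path need not have seminorm at most \(1\) on short intervals. A solution does, because \(x=\Gamma(x)\) and your Young estimate give \(\holder{x}_{\alpha,[s,s+\delta]}\lesssim\delta^{\beta-\alpha}\).
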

\end{mdframed}

A direct consequence of the previous statement is that the solution of the differential equation
is locally Lipschitz continuous in all input arguments.

\begin{mdframed}[innertopmargin=0pt]
\begin{corollary}[Solution is locally Lipschitz in inputs]
    \label{cor: solution is locally Lipschitz in inputs}
    Assume that Assumption \ref{assmpt: sufficiently nice function},
    including the optional condition \ref{it: extra assumption}, holds.
    For \(\beta\in (\frac12, 1]\) and \(\alpha \in (\frac12, \beta)\) let
    \[
        \Psi_\alpha \colon 
        \begin{cases}
            C^\beta([0,T], \banachSpace[V]) \times C^\alpha([0,T], \banachSpace[W]) \times \banachSpace[X]
            &\to C^\alpha([0,T], \banachSpace[X])
            \\
            (g,w,a) &\mapsto  \Psi_\alpha(g, w, a)
        \end{cases}
    \]
    be the map that maps the driving signal \(g\), the parameters \(w\) and the initial condition \(a\) to the unique solution \(\Psi_\alpha(g, w, a)\) of the differential equation \eqref{e:sdeb}.
    Then \(\Psi_\alpha\) is locally Lipschitz continuous, where
    the space \(C^\beta([0,T], \banachSpace[V]) \times C^\alpha([0,T], \banachSpace[W]) \times \banachSpace[X]\)
    is equipped with the norm
    \[
        \norm{(g,w,a)} = \norm{g}_\beta + \norm{w}_\alpha + \abs{a}.
    \]
    Without the optional assumption \ref{it: extra assumption} local Lipschitz continuity in \(g\) and \(a\) remains.
\end{corollary}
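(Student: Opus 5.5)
The corollary follows from Theorem~\ref{thm: differential equation solution existence and uniqueness} by a triangle inequality that changes one input at a time. Fix \(R>0\) and take two input triples \((g,w,a)\) and \((\tilde g,\tilde w,\tilde a)\) in the ball of radius \(R\) for the norm \(\norm{g}_\beta+\norm{w}_\alpha+\abs{a}\). In particular \(\holder{g}_\beta,\holder{\tilde g}_\beta\le R\), \(\norm{w}_\alpha,\norm{\tilde w}_\alpha\le R\) and \(a,\tilde a\in \ol{B(0,R)}\). To avoid boundary issues I would work with \(R+1\) in place of \(R\) when applying the theorem. By part \ref{it: existence and uniqueness}, \(\Psi_\alpha\) is well defined on the whole product space, with values in \(C^\alpha([0,T],\banachSpace[X])\). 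Take \(\ul t=0\) and \(\ol t=T\) throughout.

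Next I would decompose the difference of the two solutions as
\[
    \Psi_\alpha(g,w,a)-\Psi_\alpha(\tilde g,\tilde w,\tilde a)
    = \bigl[\Psi_\alpha(g,w,a)-\Psi_\alpha(g,w,\tilde a)\bigr]
    + \bigl[\Psi_\alpha(g,w,\tilde a)-\Psi_\alpha(\tilde g,w,\tilde a)\bigr]
    + \bigl[\Psi_\alpha(\tilde g,w,\tilde a)-\Psi_\alpha(\tilde g,\tilde w,\tilde a)\bigr].
\]
Each bracket differs in exactly one input, and the unchanged inputs still satisfy the bounds required by the theorem. The first bracket is bounded by \(C^{R+1}_{\text{init}}\abs{a-\tilde a}\) using part \ref{it: lipschitz in initial condition}. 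The second is bounded by \(C^{R+1}_{\text{driver}}\holder{g-\tilde g}_\beta\le C^{R+1}_{\text{driver}}\norm{g-\tilde g}_\beta\) using part \ref{it: lipschitz in driving signal}. The third is bounded by \(C^{R+1}_{\text{param}}\norm{w-\tilde w}_\alpha\) using part \ref{it: lipschitz in parameters}, which is where optional assumption \ref{it: extra assumption} is needed. Summing gives
\[
    \norm{\Psi_\alpha(g,w,a)-\Psi_\alpha(\tilde g,\tilde w,\tilde a)}_\alpha
    \le \max\bigl\{C^{R+1}_{\text{init}},C^{R+1}_{\text{driver}},C^{R+1}_{\text{param}}\bigr\}
    \,\norm{(g,w,a)-(\tilde g,\tilde w,\tilde a)},
\]
which is local Lipschitz continuity on every bounded set.

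The main point needing care is that the theorem's hypotheses are stated unevenly. Some items use strict inequalities \(\holder{g}_\beta<R\), others non-strict ones, and the constants must be uniform over all inputs in the ball. Enlarging the radius to \(R+1\) handles the strict inequalities. The uniformity is already built into the statement of the theorem, since the constants depend only on \(R\).

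Without assumption \ref{it: extra assumption}, I would fix \(w\) and use only the first two brackets. This gives local Lipschitz continuity of \((g,a)\mapsto\Psi_\alpha(g,w,a)\), which is the final claim of the corollary.
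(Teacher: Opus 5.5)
Your proposal is correct and follows essentially the same route as the paper: a telescoping triangle inequality that changes one input at a time, invoking parts (iii)--(v) of Theorem~\ref{thm: differential equation solution existence and uniqueness}, whose constants are uniform over a ball of inputs. The differences are cosmetic. The paper works on the neighbourhood \(\norm{\tilde\theta-\theta}<R/2\) with \(R=\max\{2\norm{\theta},1\}\) and sums the three constants. You instead work on balls around the origin, enlarge the radius to \(R+1\) to handle the strict inequality on \(\holder{g}_\beta\), and take the maximum of the constants.
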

\end{mdframed}
\begin{proof}[Proof of Corollary \ref{cor: solution is locally Lipschitz in inputs}]
    Let \(\theta = (g, w, a)\) and define \(R \coloneq \max\set{2\norm{\theta},1}\).
    Then for all \(\tilde \theta = (\tilde g, \tilde w, \tilde a)\) with \(\norm{\tilde\theta - \theta} < \frac{R}2\)
    we have
    \[
        \holder{\tilde g}_\beta \le \norm{\tilde g}_\beta
        \le \norm{\tilde g - g}_\beta + \norm{g}_\beta
        \le \tfrac{R}2 + \norm{g}_\beta
        \le R
    \]
    and similarly, \(\norm{\tilde w}_\alpha \le R\) and \(\abs{\tilde a} \le R\). Thus, we can apply Theorem \ref{thm: differential equation solution existence and uniqueness} to obtain
    \begin{align}
        \norm{\Psi_\alpha(\tilde \theta) - \Psi_\alpha(\theta)}_\alpha
        \overset{\Delta}&\le \norm{\Psi_\alpha(\tilde g, \tilde w, \tilde a) - \Psi_\alpha(\tilde g, \tilde w, a)}_\alpha
        + \ldots
        + \norm{\Psi_\alpha(\tilde g, w, a) - \Psi_\alpha(g, w, a)}_\alpha
        \\
        \overset{\text{Thm.~\ref{thm: differential equation solution existence and uniqueness}}}&\le C_{\text{init}}^R \abs{a-\tilde a} + C_{\text{driver}}^R \holder{g-\tilde g}_\beta + C_{\text{param}}^R \norm{w-\tilde w}_\alpha
        \\
        &\le \underbrace{(C_{\text{init}}^R + C_{\text{driver}}^R + C_{\text{param}}^R)}_{\text{loc. Lipschitz constant}} \norm{\tilde\theta - \theta}
    \end{align}
    The arguments for local Lipschitz continuity in \(g\) and \(a\) without the optional condition \ref{it: extra assumption} in Assumption \ref{assmpt: sufficiently nice function} are analogous.
\end{proof}

Having established existence, uniqueness, and stability of the solution, we
now turn to its approximation by finite discretizations. The following result
shows that the piecewise-linear interpolation of the Euler scheme converges to
the solution in Hölder topology, and in particular in the supremum norm. This
approximation result is a key ingredient in the proof of the large-depth
convergence of ResNets in Section~\ref{sec: resnet limit}.

\begin{mdframed}[innertopmargin=0pt]
\begin{theorem}[Euler method convergence]
    \label{thm: convergence of euler scheme}
    For \(\beta \in (\frac12, 1]\) let \(\alpha \in (\frac12, \beta)\),
    \(g\in C^\beta([0,T], \banachSpace[V])\), \(w\in C^\alpha([0,T],
    \banachSpace[W])\) and assume \(\sigma(t,w, x)\) is an \(\alpha\)-nice
    function (see Assumption \ref{assmpt: sufficiently nice
    function}). Let \(x\) be the unique solution of the differential equation
    \[
        dx_t = \sigma(t, w_t, x_t) \, dg_t
        \qquad \text{with initial condition}\quad x_0 = a.
    \]
    Let \(\pi = \set{t_0, \dots, t_n}\) be a discretization of \([0,T]\) with
    \(0=t_0 < \dots < t_n = T\), yielding the Euler discretization of \(x\) given by
    \[
        x^\pi_{k+1} = x^\pi_k + \sigma(t_k, w_{t_k}, x^\pi_k) (g_{t_{k+1}} - g_{t_k})
        \qquad \text{with initial condition}\quad x^\pi_0 = a.
    \]
    Define \(\abs{\pi}\coloneq \max_{k} \abs{t_{k+1} - t_k}\).
    Then for any \(\alpha' \in (0, \alpha)\) we have
    \begin{equation}
        \lim_{\abs{\pi} \to 0} \norm{\bar x^\pi - x}_{\alpha'} = 0
        \qquad\text{with}\qquad
        \bar x^\pi_t \coloneq x^\pi_k + \underbrace{\tfrac{t-t_k}{t_{k+1}-t_k}(x^\pi_{k+1} - x^\pi_k)}_{\text{linear interpolation}} \text{ for } t\in [t_k, t_{k+1})
    \end{equation}
    and \(\bar x^\pi_T \coloneq x^\pi_n\).
    Specifically, for any \(\alpha \in (\frac12, \beta)\) as above, and any \(R>0\), there exist \(C_{\mathrm{Euler}}^{R, \alpha}
    > 0\) and \(\tau = \tau(R, \alpha) > 0\) such that for \emph{all}
    \(\alpha'\in (0, \alpha)\), \emph{all} initial conditions \(a\) with
    \(\abs{a}\le R\), \emph{all} drivers \(g\) with
    \(\holder{g}_\beta \le R\) and \emph{all} parameters \(w\) with \(\norm{w}_\alpha
    \le R\) we have for \emph{all} discretizations \(\pi\) with maximal gap \(\abs{\pi} \le \tau\)
    \[
        \norm{\bar x^\pi - x}_{\alpha'} 
        \le C_{\mathrm{Euler}}^{R, \alpha} \abs{\pi}^{(1-\frac{\alpha'}{\alpha})(\alpha+\beta - 1)}.
    \]

\end{theorem}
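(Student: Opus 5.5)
The strategy is to prove a rate of order \(\abs{\pi}^{\alpha+\beta-1}\) in the supremum norm, together with a uniform bound on the \(\alpha\)-Hölder seminorm of \(\bar x^\pi - x\). Interpolation between \(\norm{\cdot}_\infty\) and \(\holder{\cdot}_\alpha\) then gives the stated rate in \(\norm{\cdot}_{\alpha'}\). Indeed, for any \(f\) and \(\alpha'<\alpha\), \(\abs{f_t-f_s}\le \min\{2\norm{f}_\infty, \holder{f}_\alpha\abs{t-s}^\alpha\}\le (2\norm{f}_\infty)^{1-\alpha'/\alpha}\holder{f}_\alpha^{\alpha'/\alpha}\abs{t-s}^{\alpha'}\). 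So with \(\norm{\bar x^\pi - x}_\infty \lesssim \abs{\pi}^{\alpha+\beta-1}\) and \(\holder{\bar x^\pi-x}_\alpha\le C\), we obtain exactly the exponent \((1-\frac{\alpha'}{\alpha})(\alpha+\beta-1)\). The \(\alpha\)-bound on \(x\) is given by the flow bound of Theorem~\ref{thm: differential equation solution existence and uniqueness}\ref{it: flow bound}. For \(\bar x^\pi\), use boundedness of \(\sigma\) (Assumption \ref{assmpt: sufficiently nice function}\ref{it: (nice function) bounded}). On each cell, \(\abs{x^\pi_{k+1}-x^\pi_k}\le\norm{\sigma}_\infty R\abs{t_{k+1}-t_k}^\beta\). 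Summing over cells, with linear interpolation inside a cell, gives \(\abs{\bar x^\pi_t-\bar x^\pi_s}\lesssim \norm{\sigma}_\infty R(\abs{t-s}^\beta+\abs{t-s}\cdot\#\text{cells}\cdots)\). This needs care: the sum of \(\abs{t_{k+1}-t_k}^\beta\) over cells is not automatically \(\lesssim\abs{t-s}^\beta\). I will therefore bound \(\bar x^\pi\)'s Hölder norm through the discrete sewing argument below instead.

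The core step is a discrete local error and stability argument. The one-step defect is \(\delta_k \coloneq x_{t_{k+1}} - x_{t_k} - \sigma(t_k,w_{t_k},x_{t_k})(g_{t_{k+1}}-g_{t_k})\). By the Young–Loève estimate, \(\abs{\delta_k}\le C\holder{\sigma(\cdot,w_\cdot,x_\cdot)}_{\alpha}\holder{g}_\beta\abs{t_{k+1}-t_k}^{\alpha+\beta}\). The Hölder seminorm of \(s\mapsto\sigma(s,w_s,x_s)\) is controlled by \(R\) via parts \ref{it: (nice function) locally Lipschitz} and \ref{it: (nice function) locally Hölder continuous} together with the flow bound. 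The defects are then propagated with the exact flow: \(x^\pi_n - x_T=\sum_k [\flow(x^\pi_{k+1};t_{k+1},T)-\flow(\tilde x_{k+1};t_{k+1},T)]\), where \(\tilde x_{k+1}=\flow(x^\pi_k;t_k,t_{k+1})\). Each summand is bounded by \(C^R_{\mathrm{init}}\abs{\delta_k}\) using Theorem~\ref{thm: differential equation solution existence and uniqueness}\ref{it: lipschitz in initial condition}. This yields \(\abs{x^\pi_k - x_{t_k}}\le C\sum_k\abs{t_{k+1}-t_k}^{\alpha+\beta}\le CT\abs{\pi}^{\alpha+\beta-1}\). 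The same telescoping over a sub-range \(j\le k\le m\) gives \(\abs{(x^\pi_m-x_{t_m})-(x^\pi_j-x_{t_j})}\lesssim (t_m-t_j)\abs{\pi}^{\alpha+\beta-1}\). Here I use the Hölder-seminorm form of the Lipschitz bound, which applies to the differences \(\flow(a;s,\cdot)-\flow(b;s,\cdot)\), and the fact that \(\abs{\pi}^{\alpha+\beta-1}\le \abs{t_m-t_j}^{\alpha-1}\cdot\)const whenever \(t_m-t_j\ge\abs{\pi}\). This controls the \(\alpha\)-seminorm of the error on grid points. Inside a cell, both \(x\) and \(\bar x^\pi\) move by at most \(C\abs{t-s}^\alpha\abs{\pi}^{\beta-\alpha}\) (local flow bound), which takes care of off-grid pairs.

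The main obstacle is the a priori control that keeps everything inside a ball of radius \(R'\). The Euler iterates \(x^\pi_k\) must remain bounded, say \(\abs{x^\pi_k}\le R'\) depending only on \(R\), so that the \(R'\)-dependent constants \(C^{R'}_{\mathrm{init}}\) and \(C^{R'}_{\mathrm{flow}}\) apply uniformly. I will handle this by a bootstrap. Choose \(R'\coloneq C^R_{\mathrm{flow}}+1\), and choose \(\tau\) so small that \(C^{R'}_{\mathrm{init}}CT\tau^{\alpha+\beta-1}\le 1\). Then proceed by induction on \(k\): if \(x^\pi_j\in B(0,R')\) for \(j\le k\), the telescoped error bound gives \(\abs{x^\pi_{k+1}-x_{t_{k+1}}}\le 1\), hence \(\abs{x^\pi_{k+1}}\le R'\). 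This is exactly where the threshold \(\tau(R,\alpha)\) comes from. The qualitative limit \(\norm{\bar x^\pi-x}_{\alpha'}\to0\) then follows immediately from the quantitative estimate.
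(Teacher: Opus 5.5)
Your argument is correct in outline, but it reaches the uniform H\"older bound by a genuinely different route from the paper. The sup-norm part is the paper's Step 1: telescoping with the exact flow, $C_{\mathrm{init}}$, the Young--Lo\`eve estimate, and an induction that keeps the Euler iterates in a fixed ball. Your final interpolation is the paper's Lemma~\ref{lem: uniform to holder bound}.

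There is one slip to fix in the sup-norm part. The telescoping produces the defects $d_k\coloneq x^\pi_{k+1}-\psi(x^\pi_k;t_k,t_{k+1})$, i.e.\ the Euler step compared with the flow started at $x^\pi_k$. It does not produce your $\delta_k$, which is taken along the exact solution $x$. Bounding $d_k$ requires the H\"older seminorm of $s\mapsto\sigma(s,w_s,\psi(x^\pi_k;t_k,s))$. That seminorm is controlled, via $C_{\mathrm{flow}}^{R'}$ and Lemma~\ref{lem: f Lipschitz with respect to alpha norm}, exactly under your bootstrap hypothesis $\abs{x^\pi_k}\le R'$. Three adjustments follow:
\begin{itemize}
\item the constant in your choice of $\tau$ depends on $R'$;
\item $C_{\mathrm{init}}$ should be applied on a slightly larger ball, since $\psi(x^\pi_k;t_k,t_{k+1})$ may leave $B(0,R')$;
\item you need $R'\ge R$ so that $g$ and $w$ stay admissible.
\end{itemize}

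The real departure from the paper is the uniform H\"older bound. The paper bounds $\bar x^\pi$ itself: it applies the discrete sewing lemma (Lemma~\ref{lem: discrete sewing}) on anchor blocks of length in $[\tau,2\tau]$, absorbs using $K^R_{\mathrm{sew}}(2\tau)^\beta\le\frac12$, and glues over at most $T/\tau+1$ blocks. You instead bound the error $e_k=x^\pi_k-x_{t_k}$ directly via
\[
e_m-e_j=\bigl[x^\pi_m-\psi(x^\pi_j;t_j,t_m)\bigr]+\bigl[\psi(x^\pi_j;t_j,t_m)-\psi(x_{t_j};t_j,t_m)-(x^\pi_j-x_{t_j})\bigr].
\]
The first bracket is at most $C\sum_{l=j+1}^m\abs{d_l}\lesssim (t_m-t_j)\abs{\pi}^{\alpha+\beta-1}$ by sub-range telescoping. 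The second is at most $C_{\mathrm{init}}\abs{e_j}(t_m-t_j)^\alpha$ by the H\"older-norm form of Theorem~\ref{thm: differential equation solution existence and uniqueness}\ref{it: lipschitz in initial condition}. Your displayed bound drops this second term, but you invoke exactly the estimate that produces it.

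To finish, you combine this with the within-cell bound $C\abs{t-s}^\alpha\abs{\pi}^{\beta-\alpha}$, which holds for $x$ by the local flow bound and for $\bar x^\pi$ by boundedness of $\sigma$, and split mixed pairs into three pieces. Despite your announcement, no sewing argument is needed.

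Your route reuses only stability estimates the paper has already proved and avoids sewing, anchors and absorption entirely. It even yields $\holder{\bar x^\pi-x}_\alpha\lesssim\abs{\pi}^{\beta-\alpha}\to0$, i.e.\ convergence in $C^\alpha$ itself, and after interpolation a strictly better rate for $\alpha'<\alpha$ than the one stated. The paper's route instead gives an a priori H\"older bound on the Euler scheme from its own increments, without comparing to the exact flow in H\"older norm. That is the more portable estimate when such flow stability is not available.
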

\end{mdframed}
\begin{remark}[Sup-norm]
    Observe that the case \(\alpha'=0\) may be viewed as corresponding to the
    supremum norm, for which the estimate becomes
    \[
        \norm{\bar x^\pi-x}_{\infty}
        \le
        \liminf_{\alpha'\to 0}
        \norm{\bar x^\pi-x}_{\alpha'}
        \le
        \limsup_{\alpha'\to 0}
        \norm{\bar x^\pi-x}_{\alpha'}
        \le
        C_{\mathrm{Euler}}^{R,\alpha}
        \abs{\pi}^{\alpha+\beta-1}.
    \]
    In fact, the proof proceeds by first establishing this sup-norm estimate and
    then using it to deduce convergence in Hölder topology.
\end{remark}
\begin{remark}[Discrete convergence]
    If one is not interested in the interpolation \(\bar{x}\) of \(x\), one can use the fact that, 
    at the discretization points, the interpolation coincides with \(x_k\), in such a way that
    \[
        \sup_{k} \abs{x_k^\pi - x_{t_k}}
        = \sup_{k} \abs{\bar{x}_{t_k}^\pi - x_{t_k}}
        \le \norm{\bar{x}^\pi - x}_\infty.
    \]
    Analogously, a discrete Hölder bound holds.
\end{remark}

\section{Proofs}

\subsection{Proof of Theorem \ref{thm: large depth limit of resnets with correlated weights, init}}

Choose a target exponent \(\beta \in (\tfrac12, \min\set{\hurst-\frac1p, \beta_w})\) and the 
auxiliary exponents \(\gamma_1, \gamma_2\) such that
\begin{equation}
    \label{eq: auxiliary exponents}
    \tfrac12 < \beta < \gamma_2 < \min\set{\gamma_1, \beta_w}, \qquad \text{with} \qquad \gamma_1 < \hurst - \tfrac1p.
\end{equation}
Note that we may choose \(\beta > \frac12\) without loss of generality, even though we only assume \(\beta>0\)
in the theorem statement, due to the embedding of
Hölder spaces and \(\min\set{\hurst-\frac1p, \beta_w} > \frac12\) by assumption.

\begin{steps}
    \item\label{step: driver convergence}
    \textbf{\(\iv^\Layer \overset{d}\to \cv\) in \(\gamma_1\)-Hölder space:}
    This follows directly from
    the functional limit theorems for correlated random walks
    \citep{benningFunctionalScalingLimits2026}. More specifically, Theorem 2.4
    from \citet{benningFunctionalScalingLimits2026} implies component-wise
    convergence of \(\iv^\Layer\) in Hölder space
    \[
        (\iv^\Layer)^i \overset{d}\to (\cv)^i.
    \]
    Since these processes are independent in \(i\) we obtain convergence
    of the entire processes \(\iv^\Layer\) in the product Hölder space \citep[Thm.~2.8]{billingsleyConvergenceProbabilityMeasures1999}.
    Note that by independence of \(\iv^\Layer\) and \(\iw^\Layer\) we also have
    joint convergence \((\iv^\Layer, \iw^\Layer) \overset{d}\to (\cv, \cw)\) in
    \(C^{\gamma_1} \times C^{\gamma_2}\) using \(\gamma_2 < \beta_w\).

    \item \textbf{Convergence of Wong-Zakai approximation:}
    For \(w^\Layer \in \set{\iw^\Layer, \cw}\), depending on the initialization
    assumption on \(w_\layer\), we have, by the continuous mapping Theorem \citep[e.g.][Thm.~13.25]{klenkeProbabilityTheoryComprehensive2014},
    \begin{equation}
        \label{eq: convergence wong zakai}    
        \wZh^\Layer \coloneq \Psi_{\beta}(\iv^\Layer, w^\Layer, Ax) \overset{d}\to \Psi_{\beta}(\cv, \cw, Ax) = \hCont,
        \quad \text{in} \quad C^{\beta}([0,1], \real^\dims),
    \end{equation}
    where \(\Psi_{\beta}(z, w, a)\) is the continuous solution map of the differential equation
    \[
        dh_t = \diffusion(w_t,h_t) dz_t \quad \text{with} \quad h_0 = a.
    \]
    The continuity of the solution map follows directly from Corollary \ref{cor: solution is locally Lipschitz
    in inputs} using
    Assumption \ref{assmpt: regularity of the activation function} to get
    continuity in \((z,a)\) and the additional regularity Assumption
    \ref{assmpt: additional regularity} for continuity in \((z,w,a)\).

    \item \textbf{Convergence of Euler discretization:}
    Since \(\hDiscr^\Layer\) is the Euler discretization of the differential equation
    \begin{equation}
        \label{eq: wong zakai approximation}    
        d\wZh^\Layer_s = \diffusion(w^\Layer_s, \wZh^\Layer_s) d\iv^\Layer_s \quad \text{with} \quad \wZh^\Layer_0 = Ax
    \end{equation}
    and \(\hInterp^\Layer\) is the piecewise linear interpolation of the Euler discretization,
    the convergence proof of the Euler method (Theorem \ref{thm: convergence of euler scheme}) will
    allow us to finish the proof. Observe that for this result to be applicable we need
    \(w^\Layer\) and \(\iv^\Layer\) to be bounded by some \(R > \max\set{\abs{Ax},0}\). As a consequence, we condition on this
    event to get for all bounded, Lipschitz continuous functions \(f\colon (C^\beta([0,1], \real^\dims), \norm{\cdot}_{\beta}) \to \real\)
    \begin{align}
        \E\Bigl[
            \abs[\big]{f(\hInterp^\Layer) - f(\wZh^\Layer)}\ind_{\norm{\iv^\Layer}_{\gamma_1} \le R, \norm{w^\Layer}_{\gamma_2} \le R}
        \Bigr]
        &\le  \mathrm{Lip}(f)\E[
            \norm{\hInterp^\Layer - \wZh^\Layer}_\beta
            \ind_{\norm{\iv^\Layer}_{\gamma_1} \le R, \norm{w^\Layer}_{\gamma_2} \le R}
        ]
        \\
        \label{eq: euler scheme approximation}
        \overset{\text{Thm.~\ref{thm: convergence of euler scheme}}}
        &\le  \mathrm{Lip}(f)C_{\text{Euler}}^{R, \gamma_2} \abs{\tfrac1\Layer}^{\bigl(1-\frac{\beta}{\gamma_2}\bigr)(\gamma_2 + \gamma_1 -1)}
        \to 0 \quad (\Layer \to \infty),
    \end{align}
    since \(\frac1\Layer\) is the size of the discretization intervals, and the exponent is
    positive by the choice of auxiliary exponents in \eqref{eq: auxiliary exponents}.

    \item \textbf{Conclusion.}
    For all bounded, Lipschitz continuous functions \(f \colon
    (C^\beta([0,1], \real^\dims), \norm{\cdot}_{\beta}) \to \real\)
    we have for all \(R > \max\set{\abs{Ax},0} \)
    \begin{align}
        \abs[\big]{\E[f(\hCont)] - \E[f(\hInterp^\Layer)]}
        &\le
        \begin{aligned}[t]
        \underbrace{\abs[\big]{\E[f(\hCont)] - \E[f(\wZh^\Layer)]}}_{\to 0 \quad \text{contin.\ solution }\mathrlap{\text{map \eqref{eq: convergence wong zakai}}}}
        &+ \underbrace{\E\bigl[\abs[\big]{f(\wZh^\Layer) - f(\hInterp^\Layer)}\ind_{\norm{\iv^\Layer}_{\gamma_1} \le R, \norm{w^\Layer}_{\gamma_2} \le R}
        \bigr]}_{\to 0 \quad \text{Euler method conv. \eqref{eq: euler scheme approximation}}}
        \\
        &+ 2\norm{f}_\infty \prob[\big]{\set{\norm{\iv^\Layer}_{\gamma_1} > R}\cup \set{\norm{w^\Layer}_{\gamma_2} > R}}.
        \end{aligned}
    \end{align}
    Consequently, we have
    \begin{align}
        \label{eq: bound on limit superior}    
        \limsup_{\Layer \to \infty} \abs[\big]{\E[f(\hCont)] - \E[f(\hInterp^\Layer)]}
        &\le \limsup_{\Layer \to \infty}2\norm{f}_\infty \bigl(\prob[\big]{\norm{\iv^\Layer}_{\gamma_1} > R} + \prob[\big]{\norm{w^\Layer}_{\gamma_2} > R}\bigr)
        \\
        &\to 0 \qquad (R \to \infty).
    \end{align}
    The convergence follows from tightness. Indeed, since \(\iv^\Layer
    \overset{d}\to \cv\) in \(\gamma_1\)-Hölder space (\ref{step: driver convergence}), we have that (by the continuous mapping Theorem)
    \(\norm{\iv^\Layer}_{\gamma_1} \overset{d}\to \norm{\cv}_{\gamma_1}\). This sequence is
    consequently tight in \(\real\) by Prokhorov's theorem
    \citep[e.g.][Thm.~13.29]{klenkeProbabilityTheoryComprehensive2014},
    which implies that
    \[
        \lim_{R \to \infty} \sup_{\Layer \in \nat} \prob{\norm{\iv^\Layer}_{\gamma_1} > R} = 0.
    \]
    The proof for \(\norm{w^\Layer}_{\gamma_2}\) is analogous and follows from the definition
    of \(w^\Layer\) due to \(\gamma_2 < \beta_w\).
\end{steps}

\subsection{Proof of Example \ref{ex: sufficiently nice function}}

The boundedness of \(\diffusion\) follows from the boundedness of \(\psi\).
Since \(\diffusion(w, x) = \psi(Wx + b)^\transpose \otimes \id_\dims\) we have
\[
   \abs{\sigma(w,x) - \sigma(\tilde w, \tilde x)} 
   \precsim \abs[\big]{\psi(Wx + b) - \psi(\tilde W \tilde x + \tilde b)},
\]
where \(\precsim\) means that the left-hand side is upper bounded by a constant
multiple of the right-hand side.  This constant multiple depends on the choice
of matrix norm, but always exists since all norms in finite dimension are
equivalent. Let \(W_i\) be the \(i\)-th row vector of \(W\), then,
since any vector norm is equivalent to the \(1\)-norm up to a constant, we have
\[
    \abs[\big]{\psi(Wx + b) - \psi(\tilde W \tilde x + \tilde b)}
    \precsim \sum_{i=1}^m\abs[\big]{\psi(\scp{W_i, x} + b_i) - \psi(\scp{\tilde W_i,\tilde x} + \tilde b_i)}.
\]
For every \(i\) we have
\begin{align}
    \abs{\psi(\scp{W_i, x} + b_i) - \psi(\scp{\tilde W_i,\tilde x} + \tilde b_i)}
    &\le \begin{aligned}[t]
    &\abs{\psi(\scp{W_i, x} + b_i) - \psi(\scp{W_i,\tilde x} + b_i)}
    \\
    &+ \abs{\psi(\scp{W_i, \tilde x} + b_i) - \psi(\scp{\tilde W_i,\tilde x} + b_i)}
    \\
    &+ \abs{\psi(\scp{\tilde W_i, \tilde x} + b_i) - \psi(\scp{\tilde W_i,\tilde x} + \tilde b_i)}.
    \end{aligned}
    \\
    \overset{\text{Cauchy-Schwarz}}&\precsim \lip(\psi)(\abs{W_i}\abs{x-\tilde x} + \abs{\tilde x}\abs{W_i - \tilde W_i} + \abs{b_i - \tilde b_i})
    \\
    &\le c^0(\abs{W_i})\Bigl(\abs{x-\tilde x} + (1+\abs{x}+\abs{\tilde x})\underbrace{\bigl(\abs{W_i - \tilde W_i} + \abs{b_i - \tilde b_i}\bigr)}_{\precsim \abs{w-\tilde w}}\Bigr)
\end{align}
with \(c^0(\abs{W_i}) \coloneq \lip(\psi)(1+\abs{W_i})\). Collecting
the absolute constants from the conversion of norms we have shown that
there exists a continuous function \(c^1(\abs{w})\) in \(\abs{w}\) such that
\[
    \abs{\sigma(w,x) - \sigma(\tilde w, \tilde x)} 
    \le c^1(\abs{w})\bigl(\abs{x-\tilde x} + (1+\abs{x}+\abs{\tilde x})\abs{w-\tilde w}\bigr).
\]
For the Fréchet derivative we proceed similarly:
\begin{align}
    \abs{\frechet_x \sigma(w,x) - \frechet_x \sigma(\tilde w, \tilde x)}
    &\precsim \abs[\big]{\psi'(Wx + b)W - \psi'(\tilde W \tilde x + \tilde b)\tilde W}
    \\
    &\precsim \sum_{i=1}^m\abs[\big]{\psi'(\scp{W_i, x} + b_i)W_i - \psi'(\scp{\tilde W_i, \tilde x} + \tilde b_i)\tilde W_i}.
\end{align}
This additional \(W_i\) factor does not pose a problem, as we can reduce it to the previous case with
\(\psi'\) instead of \(\psi\) using the triangle inequality
\begin{align}
    &\abs[\big]{\psi'(\scp{W_i, x} + b_i)W_i - \psi'(\scp{\tilde W_i, \tilde x} + \tilde b_i)\tilde W_i}
    \\
    &\le 
    \underbrace{\abs[\big]{\psi'(\scp{W_i, x} + b_i) - \psi'(\scp{\tilde W_i, \tilde x} + \tilde b_i)}}_{
        \precsim \lip(\psi')(\abs{W_i}\abs{x-\tilde x} + \abs{\tilde x}\abs{W_i - \tilde W_i} + \abs{b_i - \tilde b_i})
    }\abs{W_i}
    + \underbrace{\abs[\big]{\psi'(\scp{\tilde W_i, \tilde x} + \tilde b_i)}}_{\le \lip(\psi)}\abs{W_i - \tilde W_i}.
\end{align}
This yields for some constant \(c^2(\abs{w})\) that depends continuously on \(\abs{w}\) that
\[
    \abs{\frechet_x \sigma(w,x) - \frechet_x \sigma(\tilde w, \tilde x)}
    \le c^2(\abs{w})\bigl(\abs{x-\tilde x} + (1+\abs{x}+\abs{\tilde x})\abs{w-\tilde w}\bigr).
\]
Putting everything together, we infer that Assumption \ref{assmpt: regularity of the activation function} is
satisfied, with the function \(c(\abs{w}) \coloneq \max\set{c^1(\abs{w}),
c^2(\abs{w})}\). The proof of the additional regularity Assumption \ref{assmpt:
additional regularity} is analogous and left to the reader.

\subsection{Proof of Theorem \ref{thm: differential equation solution existence and uniqueness}}

For existence and uniqueness of the solution we will use the Banach fixed point theorem.
Specifically, we will construct time intervals
\([t_k, t_{k+1}]\) on which we show that the operator \(F_k\) with
\[
    F_k(x)_t = a_k + \int_{t_k}^t f(x)_s \, dg_s
    \qquad \text{where} \qquad
    f(x)_s \coloneq \sigma(s, w_s, x_s)
\]
has a unique fixed point. Key ingredients for this are the continuity
properties of the Young integral, which are summarized in Lemma \ref{lem: continuity of young integrals}
and the continuity properties of \(f\) summarized in Lemma \ref{lem: f Lipschitz with respect to alpha norm}.
Using these results we prove that \(F_k\) maps a suitable ball to itself (Lemma \ref{lem: F_k maps the ball to itself})
and is a contraction on this ball (Lemma \ref{lem: F_k is a contraction}).
The last step to prove existence and uniqueness will then be to glue the
solutions on the small intervals together and prove Hölder continuity.
The key ingredient for this is Lemma \ref{lem: holder glue}.

\begin{remark}\label{r:boundedsigma}
Interestingly, the boundedness of \(\sigma\) is used only to show that \(F_k\)
maps a suitable ball into itself. More precisely, it enters the proof only
through the four estimates
\eqref{eq: bound on f_infty}, \eqref{e:estisup},
\eqref{e:estisup2}, and \eqref{eq: uniform bound on f(x) at s}.
By contrast, the contraction property of \(F_k\) does not rely on the
boundedness of \(\sigma\).
\end{remark}

The proofs of the stability properties are relatively short, reusing some
of the machinery developed for existence and uniqueness.

\subsubsection{Proof of \ref{it: existence and uniqueness}: Existence and uniqueness}

We prove that \(x_t\) is the unique solution on finitely many small intervals.
    Define
    \[
        \tau_k\coloneq \frac{\tau_0}{k+1},
        \qquad
        K \coloneq \min\set[\big]{n\in\nat:
        \sum_{k=0}^{n-1}\tau_k\ge \ol{t}-\ul{t}},
    \]
    where \(\tau_0>0\) is to be chosen later. The number \(K\) is finite since
    \(\sum_{k=0}^\infty\tau_k=\infty\). We set
    \[
        t_k\coloneq \ul{t}+\sum_{l=0}^{k-1}\tau_l
        \quad\text{for }0\le k<K,
        \qquad t_K\coloneq \ol{t}.
    \]
    Thus \(t_0=\ul{t}\), \(t_K=\ol{t}\), and
    \(t_{k+1}-t_k\le\tau_k\) for every \(0\le k<K\); only the final
    interval may be shorter than \(\tau_{K-1}\). We set \(a_0=a\) and, once
    the fixed point on \([t_k,t_{k+1}]\) has been constructed, use its end
    point as the next initial condition, \(a_{k+1}\coloneq x_{t_{k+1}}\).
    Using the Banach fixed point
    theorem we will construct \(\alpha\)-Hölder continuous solutions on each
    interval. These are then glued together to obtain a solution on \([\ul{t},\ol{t}]\)
    and we show uniqueness and Hölder continuity of the solution on the entire
    interval \([\ul{t},\ol{t}]\).

\begin{steps}
    \item \textbf{The Banach fixed point theorem on small intervals.}
    \label{step: banach fixed point}
    On each interval \([t_k, t_{k+1}]\) we want to show that the operator \(F_k\) with
    \[
        F_k(x)_t = a_k + \int_{t_k}^t f(x)_s \, dg_s
        \qquad \text{where} \qquad
        f(x)_s = \sigma(s, w_s, x_s)
    \]
    has a unique fixed point.
    Define \(\norm{\cdot}_{\infty, k} \coloneq \norm{\cdot}_{\infty, [t_k, t_{k+1}]}\)
    and \(\holder{\cdot}_{\alpha, k} \coloneq \holder{\cdot}_{\alpha, [t_k, t_{k+1}]}\),
and the helper function
    \[
        h\colon \begin{cases}
            \nat \to \real
            \\
            n \mapsto n^{1-\alpha}\log(n)^\alpha.
        \end{cases}
    \]
    Then
    \[
        n^{-\alpha}(1+h(n)) = \Bigl(n^{-\alpha}+\frac{\log(n)^\alpha}{n^{2\alpha -1}}\Bigr) \to 0
        \implies K_h \coloneq \sup_{n\in \nat}n^{-\alpha}(1+h(n)) < \infty.
    \]
    With the constant above and the constant \(C_{\alpha, \beta}\) from Lemma \ref{lem: continuity of young integrals} we may now choose the initial interval size \(\tau_0\)
    \[
        \tau_0 = \min\set[\Big]{
            1, \bigl(\frac{\eta}{(\norm{\sigma}_\infty + C_{\alpha, \beta} \ref{const: f_alpha bound} K_h T^\alpha)R}\bigr)^{\frac1{\beta-\alpha}}, (2 K_h \ref{const: contraction})^{-\frac1\beta}
        },
    \]
    where \(\eta\) and \ref{const: f_alpha bound} are defined in Lemma \ref{lem: F_k maps the ball to itself} below and \ref{const: contraction} is defined in Lemma \ref{lem: F_k is a contraction} below.
    This choice of \(\tau_0\) ensures that the Lemmas \ref{lem: F_k maps the
    ball to itself} and \ref{lem: F_k is a contraction} are in force.

    \begin{remark}[Constants]
        \label{rem: constants}
        The constants are chosen so that they do not depend on the initial point \(a\),
        the driving signal \(g\) or the parameters \(w\) as long as they are bounded by \(R\). 
        Finally, \(\eta\) is only a variable for the uniqueness argument in \ref{step: gluing solutions together}.
        After it is established that the solution is unique we can choose
        \(\eta=1\) without loss of generality to obtain constants independent of \(\eta\).
    \end{remark}

    \begin{mdframed}[innertopmargin=0pt]
    \begin{lemma}[\(F_k\) maps the ball to itself]
        \label{lem: F_k maps the ball to itself}
        For all \(\eta > 0\) selected independently of \(k\)
        \[
            x\in B_k \coloneq B_k(\eta) \coloneq
            \set[\big]{x\colon [t_k,t_{k+1}]\to \banachSpace[X] \mid x_{t_k} = a_k,
            \holder{x}_{\alpha,k} \le \eta}
        \]
        we have for all starting points with \(\abs{a} \le R\) and all \(g\) with \(\holder{g}_\beta \le R\)
        \begin{align}
            \label{eq: bound on x_infty}
            \abs{a_{k+1}} \le \norm{x}_{\infty, k} &\le \ref{const: x sup norm bound}(1+h(k+1))
            & \defConst{x sup norm bound} &\coloneq R + \eta\bigl(1+\tfrac1{\log(2)}\bigr)^\alpha
            \\
            \label{eq: bound on f_alpha}
            \holder{f(x)}_{\alpha,k} &\le \ref{const: f_alpha bound}(1+h(k+1))
            & \defConst{f_alpha bound} &
            \coloneq \ref{const: f_bound}(1+\eta +\ref{const: x sup norm bound})
            \\
            \label{eq: bound on f_infty}
            {\norm{f(x)}_{\infty, k}} &{{}\le \norm{\sigma}_\infty}.
        \end{align}
        As a consequence, for any \(\tau_0 \le \min\set{1, \bigl(\frac{\eta}{(\norm{\sigma}_\infty + C_{\alpha, \beta} \ref{const: f_alpha bound} K_h T^\alpha)R}\bigr)^{\frac1{\beta-\alpha}}}\) we have
        \[
            \holder{F_k(x)}_{\alpha, k} \le \eta
            \qquad\text{and thereby}\qquad
            F_k(B_k) \subseteq B_k.
        \]
    \end{lemma}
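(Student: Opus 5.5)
The plan is to prove the three estimates in the order stated and then feed them into the Young integral estimate of Lemma~\ref{lem: continuity of young integrals}. Throughout, \ref{const: f_bound} denotes a constant bounding the local coefficients of \(\sigma\) on the relevant bounded set, namely
\[
\max\Bigl\{\sup_{t\le T,\ \abs{w},\abs{\tilde w}\le R} c(t,\abs{w},\abs{\tilde w}),\ \sup_{\abs{w}\le R}\mathsf c(\abs{w})\Bigr\}\,(1+R).
\]
This is finite by continuity of \(c\) and \(\mathsf c\), because \(\norm{w}_\alpha\le R\). The constant \(T^\alpha\) in the choice of \(\tau_0\) only enlarges the bound and is harmless.

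\textbf{Sup-norm bound \eqref{eq: bound on x_infty}.} The difficulty is that \(a_k\) is the endpoint of the previous fixed points, so its size must be tracked inductively over \(k\). I would assume that on every earlier interval \([t_l,t_{l+1}]\), \(l<k\), the glued solution lies in \(B_l(\eta)\). For \(t\in[t_k,t_{k+1}]\) this gives
\[
\abs{x_t}\le \abs{a}+\sum_{l=0}^{k}\eta\,\tau_l^{\alpha}\le R+\eta\tau_0^\alpha\sum_{n=1}^{k+1}n^{-\alpha}.
\]
The same bound holds for \(a_{k+1}=x_{t_{k+1}}\), which gives the first inequality. Next, Hölder's inequality with exponents \(1/(1-\alpha)\) and \(1/\alpha\) yields
\[
\sum_{n=1}^{N}n^{-\alpha}\le N^{1-\alpha}\Bigl(\sum_{n=1}^N n^{-1}\Bigr)^{\alpha}\le N^{1-\alpha}(1+\log N)^\alpha .
\]
For \(N\ge2\) we have \(1+\log N\le (1+1/\log 2)\log N\), and the case \(N=1\) is covered by the additive \(1\). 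Together with \(\tau_0\le1\) this gives \(\norm{x}_{\infty,k}\le \ref{const: x sup norm bound}(1+h(k+1))\). I regard this step as the main point of the lemma. The naive bound grows linearly in \(k\). The logarithmic, sublinear growth \(h\) is exactly what is later compensated by the factor \(\tau_k^\alpha=\tau_0^\alpha(k+1)^{-\alpha}\), through the finiteness of \(K_h\).

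\textbf{Bounds on \(f(x)\).} Estimate \eqref{eq: bound on f_infty} is immediate from Assumption~\ref{assmpt: sufficiently nice function}\ref{it: (nice function) bounded}. For \eqref{eq: bound on f_alpha}, take \(s,t\in[t_k,t_{k+1}]\) and split
\[
f(x)_t-f(x)_s=\bigl[\sigma(t,w_t,x_t)-\sigma(s,w_t,x_t)\bigr]+\bigl[\sigma(s,w_t,x_t)-\sigma(s,w_s,x_s)\bigr].
\]
The first bracket is bounded by \(\mathsf c(\abs{w_t})(1+\abs{x_t})\abs{t-s}^\alpha\) using part~\ref{it: (nice function) locally Hölder continuous}. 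The second is bounded by
\[
c\bigl(\abs{x_t-x_s}+(1+2\norm{x}_{\infty,k})\abs{w_t-w_s}\bigr)\le c\bigl(\eta+(1+2\norm{x}_{\infty,k})R\bigr)\abs{t-s}^\alpha
\]
using part~\ref{it: (nice function) locally Lipschitz}. Inserting the sup bound and using \(1\le 1+h(k+1)\) collects everything into \(\ref{const: f_bound}(1+\eta+\ref{const: x sup norm bound})(1+h(k+1))\), which is \eqref{eq: bound on f_alpha}.

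\textbf{Self-map.} Since \(F_k(x)_{t_k}=a_k\), it remains to bound \(\holder{F_k(x)}_{\alpha,k}\). Lemma~\ref{lem: continuity of young integrals} gives a bound of the form
\[
\holder{F_k(x)}_{\alpha,k}\le\bigl(\norm{f(x)}_{\infty,k}+C_{\alpha,\beta}\holder{f(x)}_{\alpha,k}\tau_k^{\alpha}\bigr)\holder{g}_\beta\,\tau_k^{\beta-\alpha}.
\]
Next I would use \(\tau_k^\alpha(1+h(k+1))=\tau_0^\alpha(k+1)^{-\alpha}(1+h(k+1))\le K_h\le K_hT^\alpha\) when \(T\ge1\), with the constant adjusted otherwise. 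This makes the bracket at most \(\norm{\sigma}_\infty+C_{\alpha,\beta}\ref{const: f_alpha bound}K_hT^\alpha\), uniformly in \(k\). Multiplying by \(R\,\tau_k^{\beta-\alpha}\le R\,\tau_0^{\beta-\alpha}\) and applying the stated restriction on \(\tau_0\) gives \(\holder{F_k(x)}_{\alpha,k}\le\eta\). Hence \(F_k(B_k)\subseteq B_k\), and the induction hypothesis carries over to interval \(k+1\).
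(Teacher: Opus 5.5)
Your proof is correct and follows the paper's argument essentially step for step: the same inductive telescoping of $\abs{a_k}$ combined with the Jensen (equivalently, Hölder) bound $\sum_{n\le N} n^{-\alpha}\le N^{1-\alpha}(1+\log N)^\alpha$, the same estimate of $\holder{f(x)}_{\alpha,k}$ (you inline the computation that the paper cites as the local-boundedness part of Lemma~\ref{lem: f Lipschitz with respect to alpha norm}), and the same Young-integral bound, with $K_h$ absorbing $(1+h(k+1))\tau_k^\alpha$. Two bookkeeping remarks: first, the term $2R\norm{x}_{\infty,k}$ requires the factor $(1+2R)$ rather than $(1+R)$ in your constant, which is exactly the paper's $K_c^R(1+2R)$; second, for $T<1$ no adjustment of the constant is needed, because either $\tau_0\le T$, so that $\tau_0^\alpha\le T^\alpha$, or $\tau_0>T$, in which case $K=1$, the only interval has length at most $T$, and $h(1)=0$.
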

    \end{mdframed}
    \begin{proof}
        For the first claim \eqref{eq: bound on x_infty}, observe that by \(\tau_0 \le 1\)
        \[
            \sum_{l=0}^{k-1} \tau_l^\alpha
            \le k\Bigl(\frac1k\sum_{l=0}^{k-1} \tau_l^\alpha\Bigr)
            \overset{\text{Jensen}}\le k\Bigl(\frac1k\sum_{l=0}^{k-1} \tau_l\Bigr)^\alpha
            \le k^{1-\alpha} \Bigl(\sum_{l=0}^{k-1} \frac{\tau_0}{l+1}\Bigr)^\alpha
            \overset{\tau_0 \le 1}\le k^{1-\alpha} \Bigl(1+\log(k)\Bigr)^\alpha.
        \]
        Consequently, for all \(k\ge 1\) and \(x \in B_{k-1}\) we have,
    \[\begin{aligned}
        \norm{x}_{\infty, k-1} = \sup_{t\in[t_{k-1},t_{k}]} \abs{x_t}
        &\le \abs{a_{k-1}} + \holder{x}_{\alpha,k-1} (t_{k} - t_{k-1})^\alpha
        \le \abs{a_{k-1}} + \eta\tau_{k-1}^\alpha
        \\
        \overset{\text{induction}}&\le \abs{a_0} + \eta\sum_{l=0}^{k-1} \tau_l^\alpha
        \le R + \eta k^{1-\alpha} \Bigl(1+\log(k)\Bigr)^\alpha
        \le \ref{const: x sup norm bound} (1+ h(k)).
    \end{aligned}
    \]
    The last inequality follows from \(R + \eta\le \ref{const: x sup norm bound}\) for \(k=1\) and for \(k\ge 2\) we use 
    \[
        k^{1-\alpha} \Bigl(1+\log(k)\Bigr)^\alpha
        \le \bigl(1+\tfrac1{\log(2)}\bigr)^\alpha
        \underbrace{k^{1-\alpha}\log(k)^\alpha}_{=h(k)}.
    \]
    For the second claim \eqref{eq: bound on f_alpha} we use local boundedness of \(f\) from Lemma \ref{lem: f Lipschitz with respect to alpha norm},
    specifically,
    \[
        \holder{f(x)}_{\alpha, k} \overset{\text{Lem.~\ref{lem: f Lipschitz with respect to alpha norm}}}\le \ref{const: f_bound}(1+\norm{x}_{\alpha, k})
        = \ref{const: f_bound}(1+ \underbrace{\holder{x}_{\alpha, k}}_{\le \eta} + \underbrace{\norm{x}_{\infty, k}}_{
            \le \ref{const: x sup norm bound}(1+ h(k))
        })
        \le \ref{const: f_alpha bound}(1+h(k))
    \]
    with \(\ref{const: f_alpha bound} = \ref{const: f_bound}(1+\eta +\ref{const: x sup norm bound})\).
    The third claim \eqref{eq: bound on f_infty} follows immediately from the boundedness of \(\sigma\): by the definition of \(f\)
    \[
        \abs{f(x)_s}
        = \abs{\sigma(s, w_s, x_s)}
        \le \norm{\sigma}_\infty.
    \]
    This bound finally implies that for any \(x\in B_k\) and \(s,t \in [t_k, t_{k+1}]\) using the continuity of Young integrals from Lemma \ref{lem: continuity of young integrals} 
    \[\begin{aligned}
        \frac{\abs{F_k(x)_t - F_k(x)_s}}{\abs{t-s}^\alpha}
        = \frac{\abs[\Big]{\int_s^t f(x)_u \, dg_u}}{\abs{t-s}^\alpha}
        \overset{\text{Lem.~\ref{lem: continuity of young integrals}}}&\le (\underbrace{\abs{f(x)_s}}_{\le {\norm{\sigma}_{\infty}}}
        + C_{\alpha, \beta} \underbrace{\holder{f(x)}_{\alpha, k} \abs{t-s}^\alpha}_{\le \ref{const: f_alpha bound}(1+h(k+1)) \tau_k^\alpha}
        ) \holder{g}_\beta \underbrace{\abs{t-s}^{\beta-\alpha}}_{\le \tau_k^{\beta-\alpha}}
        \\
        \label{e:estisup}\overset{\tau_k=\frac{\tau_0}{k+1}}&\le (\norm{\sigma}_\infty + C_{\alpha, \beta} \ref{const: f_alpha bound} K_h \tau_0^\alpha)\holder{g}_\beta 
        \bigl(\tfrac{\tau_0}{k+1}\bigr)^{\beta-\alpha}
        \\
        &\le \eta.
    \end{aligned}
    \]
    For the last equation we use \(\holder{g}_\beta \le R\) and the choice
    \(\tau_0 \le \min\set{1, \bigl(\frac{\eta}{(\norm{\sigma}_\infty + C_{\alpha, \beta} \ref{const: f_alpha bound} K_h T^\alpha)R}\bigr)^{\frac1{\beta-\alpha}}}\).
    This proves \(\holder{F_k(x)}_{\alpha,k} \le \eta\) and thus \(F_k(x) \in B_k\), that is: \(F_k\) maps \(B_k\) to itself.
    \end{proof}

    \begin{mdframed}[innertopmargin=0pt]
    \begin{lemma}[\(F_k\) is a contraction]
        \label{lem: F_k is a contraction}
        Assume \(\holder{g}_\beta \le R\) and let \(\tau_0 \le \min\set{1, (2
        K_h \ref{const: contraction})^{-\frac1\beta}}\) with
        \[
            \defConst{contraction}
            \coloneq 2\ref{const: f holder bound}(1 + \eta + \ref{const: x sup norm bound})(1+ C_{\alpha, \beta})R.
        \]
        Let \(x,y \in B_k(\eta)\) with \(B_k\) as in Lemma \ref{lem: F_k maps the ball to itself},
        then \(F_k\) is a contraction on \(B_k\), that is
        \[
            \holder{F_k(y) - F_k(x)}_{\alpha, k}
            \le \tfrac12 \holder{y-x}_{\alpha,k}.
        \]
    \end{lemma}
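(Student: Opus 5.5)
Since \(x_{t_k}=y_{t_k}=a_k\), the difference \(F_k(y)-F_k(x)\) vanishes at \(t_k\) and equals the Young integral \(\int_{t_k}^t (f(y)_u-f(x)_u)\,dg_u\). I will apply the continuity estimate for Young integrals (Lemma \ref{lem: continuity of young integrals}) to the integrand \(d\coloneq f(y)-f(x)\) on \([s,t]\subseteq[t_k,t_{k+1}]\). This gives
\[
    \frac{\abs{F_k(y)_t-F_k(y)_s-(F_k(x)_t-F_k(x)_s)}}{\abs{t-s}^\alpha}
    \le \bigl(\abs{d_s}+C_{\alpha,\beta}\holder{d}_{\alpha,k}\abs{t-s}^\alpha\bigr)\holder{g}_\beta\abs{t-s}^{\beta-\alpha}.
\]
Everything then reduces to bounding \(\norm{d}_{\infty,k}\) and \(\holder{d}_{\alpha,k}\) by a multiple of \(\holder{y-x}_{\alpha,k}\).

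For the sup part, the pointwise local Lipschitz property of \(\sigma\) in \(x\) (with the same \(w_s\)) gives \(\abs{d_s}\le c(s,\abs{w_s},\abs{w_s})\abs{y_s-x_s}\). Because \(y-x\) vanishes at \(t_k\), we have \(\abs{y_s-x_s}\le\holder{y-x}_{\alpha,k}\tau_k^\alpha\).

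For the Hölder part I will invoke the Lipschitz statement of Lemma \ref{lem: f Lipschitz with respect to alpha norm}. I expect it to read
\[
    \holder{f(y)-f(x)}_{\alpha,k}\le \ref{const: f holder bound}\bigl(1+\norm{x}_{\alpha,k}+\norm{y}_{\alpha,k}\bigr)\norm{y-x}_{\alpha,k}.
\]
Its proof uses the standard decomposition
\[
    \sigma(u,w_u,y_u)-\sigma(u,w_u,x_u)=\int_0^1 \frechet_x\sigma\bigl(u,w_u,x_u+\theta(y_u-x_u)\bigr)\,d\theta\,(y_u-x_u).
\]
In that decomposition, the local Lipschitz and Hölder continuity of \(\frechet_x\sigma\) in \((t,w,x)\), together with \(\norm{w}_\alpha\le R\), control the \(\alpha\)-increments. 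The factor \(1+\norm{x}_{\alpha,k}+\norm{y}_{\alpha,k}\) is bounded using Lemma \ref{lem: F_k maps the ball to itself}: \(\holder{\cdot}_{\alpha,k}\le\eta\) and \(\norm{\cdot}_{\infty,k}\le \ref{const: x sup norm bound}(1+h(k+1))\). Finally, \(\norm{y-x}_{\infty,k}\le\tau_k^\alpha\holder{y-x}_{\alpha,k}\le\holder{y-x}_{\alpha,k}\), so \(\norm{y-x}_{\alpha,k}\le 2\holder{y-x}_{\alpha,k}\). This is the origin of the factor \(2\) in \ref{const: contraction}.

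Combining these estimates with \(\holder{g}_\beta\le R\) and \(\abs{t-s}\le\tau_k\le 1\), I obtain
\[
    \holder{F_k(y)-F_k(x)}_{\alpha,k}\le \ref{const: contraction}(1+h(k+1))\,\tau_k^{\beta}\,\holder{y-x}_{\alpha,k},
\]
after absorbing the sup-term contribution into \(\ref{const: f holder bound}\), using \(\tau_k^\alpha\tau_k^{\beta-\alpha}=\tau_k^\beta\). Writing \(\tau_k^\beta=\tau_0^\beta(k+1)^{-\beta}\le\tau_0^\beta(k+1)^{-\alpha}\), the definition of \(K_h\) gives \((1+h(k+1))\tau_k^\beta\le K_h\tau_0^\beta\). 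The choice \(\tau_0\le(2K_h\ref{const: contraction})^{-1/\beta}\) then yields the factor \(\tfrac12\).

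\textbf{Main obstacle.} The delicate step is the Hölder estimate of \(d\). The local Lipschitz constants depend on \(\abs{x}\), and \(\abs{x}\) grows like \(h(k+1)\) along the intervals. I must make sure only a single factor \(1+h(k+1)\) appears, not a product of two. This requires using \(\eta\) (not the growing sup-norm) in the second-order cross terms, or equivalently relying on the exact form of the Lipschitz bound in Lemma \ref{lem: f Lipschitz with respect to alpha norm} (where \(\norm{x}_{\alpha,k}\) enters linearly). Only then is the decay \(\tau_k^\beta\sim(k+1)^{-\beta}\) enough to compensate.
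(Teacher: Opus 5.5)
Your route is the paper's: the Young-integral estimate for $d=f(y)-f(x)$, then Lemma~\ref{lem: f Lipschitz with respect to alpha norm} for $\holder{d}_{\alpha,k}$, the ball bounds of Lemma~\ref{lem: F_k maps the ball to itself}, and finally $(1+h(k+1))(k+1)^{-\beta}\le K_h$. Bounding $\abs{d_s}$ by the pointwise constant $K_c^R$ is also fine; the paper instead uses $\abs{d_s}\le\holder{d}_{\alpha,k}\tau_k^\alpha$, which holds because $d_{t_k}=0$.

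The gap is in the constant bookkeeping: as written, your chain does not give $\tfrac12$ with the stated \ref{const: contraction}. The factor $2$ in \ref{const: contraction} is not available for your step $\norm{y-x}_{\alpha,k}\le 2\holder{y-x}_{\alpha,k}$. It is already used up by
\[
1+\norm{x}_{\alpha,k}+\norm{y}_{\alpha,k}\le 1+2\eta+2\ref{const: x sup norm bound}(1+h(k+1))\le 2(1+\eta+\ref{const: x sup norm bound})(1+h(k+1)),
\]
and it cannot be dropped there. For $k=0$ we have $h(1)=0$, so a bound by $1+\eta+\ref{const: x sup norm bound}$ would require $1+2\eta+2\ref{const: x sup norm bound}\le 1+\eta+\ref{const: x sup norm bound}$. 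With the weakened Lipschitz bound you anticipated, the $C_{\alpha,\beta}$-term therefore carries $4\ref{const: f holder bound}(1+\eta+\ref{const: x sup norm bound})$, twice what \ref{const: contraction} allows. With $\tau_0\le(2K_h\ref{const: contraction})^{-1/\beta}$ your estimates then only give a factor of at most $\frac{1+4C_{\alpha,\beta}}{4(1+C_{\alpha,\beta})}$. That is $<1$, so $F_k$ is still a contraction, but it is not $\le\tfrac12$ unless $C_{\alpha,\beta}\le\tfrac12$. So the claimed inequality is not established.

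The fix is to use Lemma~\ref{lem: f Lipschitz with respect to alpha norm} in its actual, sharper form. Its zeroth-order term is $\min_{t\in[t_k,t_{k+1}]}\abs{x_t-y_t}$, not $\norm{y-x}_{\infty,k}$, and this minimum vanishes because $x_{t_k}=y_{t_k}$. Hence $\holder{d}_{\alpha,k}\le 2\ref{const: f holder bound}(1+\eta+\ref{const: x sup norm bound})(1+h(k+1))\holder{y-x}_{\alpha,k}$, with no extra factor. Your sup-contribution $K_c^R R\tau_k^\beta\holder{y-x}_{\alpha,k}$ fits into the ``$1$'' of $(1+C_{\alpha,\beta})$, because $K_c^R\le\ref{const: f holder bound}\le 2\ref{const: f holder bound}(1+\eta+\ref{const: x sup norm bound})(1+h(k+1))$. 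The estimate then closes exactly at $\ref{const: contraction}(1+h(k+1))\tau_k^\beta\le\ref{const: contraction}K_h\tau_0^\beta\le\tfrac12$.
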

    \end{mdframed}

    \begin{proof}
    Let \(x,y \in B_k\). 
    Since \(x_{t_k} = y_{t_k}\) we also have \(f(x)_{t_k} = f(y)_{t_k}\) 
    and therefore for all \(s\in [t_k, t_{k+1}]\)
    \begin{equation}
        \label{eq: bound on f(y)-f(x) at s}
        \begin{aligned}
        \abs{f(y)_s - f(x)_s}
        = \abs{f(y)_s - f(x)_s - f(y)_{t_k} + f(x)_{t_k}}
        &\le \holder{f(y) - f(x)}_{\alpha, k} \abs{s-t_k}^\alpha
        \\
        &\le \holder{f(y) - f(x)}_{\alpha, k}\tau_k^\alpha.
    \end{aligned}
    \end{equation}
    Thus for any \(s,t \in [t_k, t_{k+1}]\) we have
    by the continuity of Young integrals (Lemma \ref{lem: continuity of young integrals})
    \begin{align}
        &\abs{F_k(y)_t - F_k(y)_s - F_k(x)_t + F_k(x)_s}
        = \abs[\Big]{\int_s^t f(y)_u - f(x)_u \, dg_u}
        \\
        \overset{\text{Lem.~\ref{lem: continuity of young integrals}}}&\le (\abs{f(y)_s - f(x)_s} + C_{\alpha, \beta}\holder{f(y) - f(x)}_{\alpha,k}\abs{t-s}^\alpha ) \holder{g}_\beta \abs{t-s}^{\beta}
        \\
        \overset{\eqref{eq: bound on f(y)-f(x) at s}}&\le (1 + C_{\alpha, \beta})\holder{f(y) - f(x)}_{\alpha,k} \holder{g}_\beta \tau_k^\alpha \abs{t-s}^{\beta}
    \end{align}
    Dividing both sides by \(\abs{t-s}^\alpha\) we observe that \(\abs{t-s}^{\beta-\alpha} \le \tau_k^{\beta-\alpha}\), where we use
    \(t_{k+1}-t_k\le \tau_k\), and therefore
    \[\begin{aligned}
        \holder{F_k(y)-F_k(x)}_{\alpha,k}
        &= \sup_{s\neq t \in[t_k, t_{k+1}]}\frac{\abs{F_k(y)_t - F_k(y)_s - F_k(x)_t + F_k(x)_s}}{\abs{t-s}^\alpha}
        \\
        &\le \holder{f(y) - f(x)}_{\alpha, k}(1 + C_{\alpha, \beta}) \holder{g}_\beta \tau_k^{\beta}.
    \end{aligned}
    \]
    Using \(x_{t_k} = y_{t_k}\) again in Lemma \ref{lem: f Lipschitz with respect to alpha norm}
    we get a bound on \(\holder{f(y) - f(x)}_{\alpha, k}\) of the form
    \[
        \holder{f(y) - f(x)}_{\alpha, k}
        \le \ref{const: f holder bound} (1+ \norm{x}_{\alpha,k} + \norm{y}_{\alpha,k}) \holder{y-x}_{\alpha,k}.
    \]
    Recall that by Lemma \ref{lem: F_k maps the ball to itself} we have for \(x \in B_k\)
    \[
        \norm{x}_{\alpha, k} = \holder{x}_{\alpha, k} + \norm{x}_{\infty, k} \le \eta + \ref{const: x sup norm bound}(1+h(k+1))
    \]
    and therefore the same for \(y\in B_k\).
    Using the constant
    \[
        \ref{const: contraction}
        = 2\ref{const: f holder bound}(1 + \eta + \ref{const: x sup norm bound})(1+ C_{\alpha, \beta})R
    \]
    and \(\holder{g}_\beta \le R\) we thus have
    \[
        \holder{F_k(y)-F_k(x)}_{\alpha,k}
       \le \holder{y-x}_{\alpha,k} \ref{const: contraction} (1+h(k+1)) \Bigl(\frac{\tau_0}{k+1}\Bigr)^{\beta}
       \le \tfrac12 \holder{y-x}_{\alpha,k}.
    \]
    In the last inequality, we used \((1+h(k+1))(k+1)^{-\beta} \le K_h\) (due to
    \(\alpha < \beta\)) and the choice of \(\tau_0 \le (2K_h\ref{const:
    contraction})^{-\frac1\beta}\).
    Consequently \(F_k\) is a contraction on \(B_k\).
    \end{proof}

    \item \textbf{Gluing the solutions together, uniqueness and Hölder continuity.}
    \label{step: gluing solutions together}
    Now we simply apply Lemma \ref{lem: F_k maps the ball to itself} and Lemma \ref{lem: F_k is a contraction}
    to obtain by the Banach fixed point theorem that \(F_k\) has a unique fixed point
    \(x\) in \(B_k = B_k(\eta)\) for every \(k\in\set{0,\dots,K-1}\). By concatenating the solutions on the intervals \([t_k,
    t_{k+1}]\)
    we obtain a solution on \([\ul{t}, \ol{t}]\). So far we only know that this
    solution is \(\alpha\)-Hölder continuous on each interval \([t_k, t_{k+1}]\) with constant \(\eta\).
    
    \begin{lemma}[Hölder glue]
        \label{lem: holder glue}
        Let \(0\le\ul{t}<\ol{t}\le T\) and
        \(\pi = \set{t_0, \dots, t_K}\) be a discretization of
        \([\ul{t},\ol{t}]\), so that \(t_0=\ul{t}\) and \(t_K=\ol{t}\).
        Let \(x\colon [\ul{t}, \ol{t}] \to \banachSpace[X]\)
        be a function such that for all \(k\in \set{0, \dots, K-1}\) we have
        \[
            \holder{x}_{\alpha, [t_k, t_{k+1}]} \le \eta
        \]
        for some \(\eta > 0\). Then \(x\) is \(\alpha\)-Hölder continuous
        on \([\ul{t}, \ol{t}]\), specifically
        \[
            \holder{x}_{\alpha, [\ul{t}, \ol{t}]}
            \le \eta K^{1-\alpha}.
        \]
    \end{lemma}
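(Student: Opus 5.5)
The plan is to take arbitrary \(s<t\) in \([\ul{t},\ol{t}]\) and split the increment \(x_t-x_s\) along the grid points of \(\pi\) between them. If \(s\) and \(t\) lie in the same interval \([t_k,t_{k+1}]\), the hypothesis gives \(\abs{x_t-x_s}\le\eta\abs{t-s}^\alpha\le\eta K^{1-\alpha}\abs{t-s}^\alpha\), since \(K\ge1\). Otherwise let \(s\in[t_i,t_{i+1}]\) and \(t\in[t_j,t_{j+1}]\) with \(i<j\). We then insert the intermediate points
\[
    s=u_0<u_1=t_{i+1}<u_2=t_{i+2}<\dots<u_{m-1}=t_j<u_m=t,
\]
so that \(m=j-i+1\le K\), and each consecutive pair \([u_{l},u_{l+1}]\) lies in a single interval of \(\pi\).

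Next we apply the triangle inequality together with the per-interval Hölder bound:
\[
    \abs{x_t-x_s}\le\sum_{l=0}^{m-1}\abs{x_{u_{l+1}}-x_{u_l}}\le\eta\sum_{l=0}^{m-1}(u_{l+1}-u_l)^\alpha .
\]
The sum is controlled by concavity of \(r\mapsto r^\alpha\). This is the same Jensen argument already used in the proof of Lemma \ref{lem: F_k maps the ball to itself}:
\[
    \sum_{l=0}^{m-1}(u_{l+1}-u_l)^\alpha\le m\Bigl(\tfrac1m\sum_{l}(u_{l+1}-u_l)\Bigr)^\alpha=m^{1-\alpha}(t-s)^\alpha .
\]
Since \(m\le K\) and \(1-\alpha\ge0\), this gives \(\abs{x_t-x_s}\le\eta K^{1-\alpha}\abs{t-s}^\alpha\). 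Taking the supremum over \(s\neq t\) yields the claim.

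There is no real obstacle here. The only point needing care is the count of pieces: we must check that the subdivision uses at most \(K\) subintervals, which holds because the pieces lie in the distinct intervals with indices \(i,\dots,j\). A degenerate piece, for example when \(s=t_{i+1}\), can simply be dropped. It only lowers \(m\), and the bound \(m^{1-\alpha}\le K^{1-\alpha}\) still applies.
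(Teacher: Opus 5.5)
Your proposal is correct and follows essentially the same route as the paper's proof. Both split \(x_t-x_s\) at the grid points between \(s\) and \(t\), bound each piece by the per-interval Hölder estimate, and use concavity of \(r\mapsto r^\alpha\) to get the factor \((j-i+1)^{1-\alpha}\le K^{1-\alpha}\).
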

    \begin{proof}
    To get \(\alpha\)-Hölder continuity on \([\ul{t},\ol{t}]\), let \(t, s\in[\ul{t},\ol{t}]\) (without loss of
    generality \(t > s\)). Then there exist \(k,m \in \set{0,\dots,K-1}\) such that \(t \in
    [t_k, t_{k+1}]\) and \(s \in [t_m, t_{m+1}]\). The case \(k=m\) is trivial so we assume
    \(m<k\) without loss of generality. Then we have
    \[\begin{aligned}
        \abs{x_t - x_s}
        &\le \abs{x_t - x_{t_k}} + \sum_{l=m+1}^{k-1} \abs{x_{t_{l+1}} - x_{t_l}} + \abs{x_{t_{m+1}} - x_s}
        \\
        &\le \eta (t-t_k)^\alpha + \sum_{l=m+1}^{k-1} \eta (t_{l+1}-t_l)^\alpha + \eta (t_{m+1}-s)^\alpha
        & (\holder{x}_{\alpha, [t_l, t_{l+1}]} \le \eta)
        \\
        &\le \eta(k-m + 1)^{1-\alpha} (t-s)^\alpha
        & \hspace{-4em}\Bigl(n\sum_{i=1}^n \tfrac1ny_i^\alpha \overset{\text{concave}}\le n \Bigl(\frac1n\sum_{i=1}^n y_i\Bigr)^\alpha\Bigr)
        \\
        &\le \eta K^{1-\alpha}\abs{t-s}^\alpha
    \end{aligned}\]
    and consequently \(x\) is \(\alpha\)-Hölder continuous on \([\ul{t},\ol{t}]\) with constant \(\eta K^{1-\alpha}\).
    \end{proof}

    Since \(\ol{t}-\ul{t}\le T\), the number \(K\) of intervals in our
    construction is bounded by the constant
    \begin{equation}
        \label{eq: definition of K_T}
        K\le K_T\coloneq
        \min\set[\Big]{n\in\nat:\sum_{k=0}^{n-1}\tau_k\ge T}.
    \end{equation}
    The sequence \((\tau_k)_k\) is independent of \(\ul{t}\), and so is \(K_T\). Applying Lemma \ref{lem: holder glue} therefore gives the
    uniform bound
    \[
        \holder{x}_{\alpha,[\ul{t},\ol{t}]}
        \le \defConst{x alpha Hölder}
        \coloneq \eta(K_T+1)^{1-\alpha}.
    \]

    Uniqueness of the solution follows from the Banach fixed point theorem on
    the ball \(B_k\). For the general case, pick two \(\alpha\)-Hölder
    continuous solutions \(x\) and \(y\) of the differential equation
    and select \(\eta = \max\set{\holder{x}_\alpha, \holder{y}_\alpha}\). Then by induction
    over \(k\) we have \(x,y \in B_k(\eta)\) for all \(k\) and thus \(x=y\) as argued
    above. This finishes the proof of \ref{it: existence and uniqueness}.
\end{steps}

\subsubsection{Proof of \ref{it: flow bound}: Local flow bound}

    First observe that none of the constants depend on the exact initial condition (cf.~Remark~\ref{rem: constants}).
    We only used \(\abs{a} \le R\) in Lemma \ref{lem: F_k maps the ball to itself} to define
    the constant \(\ref{const: x sup norm bound}\) and thereby the following constants that use it.
    In particular, for any initial condition \(a\in B(0,R)\) we obtain
    uniform bounds on the solution \(x\)
    \begin{equation}
        \label{eq: uniform bounds on x}    
        \begin{alignedat}{2}
        \norm{x}_\alpha
        &= \norm{x}_\infty
        &&+ \holder{x}_\alpha
        \\
        &\le \ref{const: x sup norm bound}(1+h(K_T +1))
        &&+ \ref{const: x alpha Hölder}
        \eqcolon C_{\text{flow}}^R 
        \end{alignedat}
    \end{equation}
    where \ref{const: x alpha Hölder} and \(K_T\) are defined in \eqref{eq: definition of K_T} and
    are independent of \(\ul{t}\) by the discussion preceding Lemma \ref{lem: holder glue}. Since this constant
    is independent of the initial condition \(a\in B(0,R)\) and the initial time \(\ul{t}\)
    we moreover have these uniform bounds on the flow \(\flow(a; \ul{t}, \cdot)\)
    for all \(a\in B(0,R)\) and \(\ul{t} \in [0,T]\), that is
    \begin{equation}
        \label{eq: uniform bounds on flow}    
        \norm{\flow(a; \ul{t}, \cdot)}_\alpha
        \le C_{\text{flow}}^R
        \qquad\forall a\in B(0,R), \ul{t} \in [0,T].
    \end{equation}
    This proves the bound \eqref{eq: flow bound}. For the local bound
    observe that we have by continuity of the Young integral (Lemma \ref{lem: continuity of young integrals})
    and local boundedness of \(f\) (Lemma \ref{lem: f Lipschitz with respect to alpha norm})
    \begin{align}
        \abs{x_t - x_s}
        &= \abs[\Big]{\int_s^t f(x)_u \, dg_u}
        \\
        \overset{\text{Lem.~\ref{lem: continuity of young integrals}}}&\le (\underbrace{\abs{f(x)_s}}_{\le \red{\norm{\sigma}_\infty}}
        + C_{\alpha, \beta} \underbrace{
            \holder{f(x)}_\alpha
        }_{\le \ref{const: f_bound}\mathrlap{(1+ \norm{x}_{\alpha}) \quad (\text{Lem.~\ref{lem: f Lipschitz with respect to alpha norm}})}} \abs{t-s}^\alpha) \holder{g}_\beta \abs{t-s}^\beta \label{e:estisup2}
        \\
        &\le \underbrace{(\red{\norm{\sigma}_\infty} + C_{\alpha, \beta} \ref{const: f_bound}(1+ C_{\text{flow}}^R) T^\alpha) R}_{\eqcolon C_{\text{flow,loc}}^R} \abs{t-s}^\beta.
    \end{align}
    Since this is a uniform bound, we consequently have
    \[
        \holder{\flow(a; \ul{t}, \cdot)}_{\alpha, [\ul{t}, \ol{t}]}
        \le C_{\text{flow,loc}}^R (\ol{t}-\ul{t})^{\beta-\alpha}.
    \]

\subsubsection{Proof of \ref{it: lipschitz in initial condition}: Local Lipschitz continuity in the initial condition}

    With \(f(x)_s = \sigma(s, w_s, x_s)\) consider two solutions to the ODE \(x\) and \(y\)
    starting in \(a\) and \(b\) respectively, that is
    \[
        x_t = a + \int_{\ul{t}}^t f(x)_s \, dg_s \qquad\text{and}\qquad y_t = b + \int_{\ul{t}}^t f(y)_s \, dg_s.
    \]
    We will again prove Lipschitz continuity on small intervals \([t_k,
    t_{k+1}]\) and then glue the bounds together to obtain Lipschitz continuity
    on \([\ul{t},\ol{t}]\). However this time it is sufficient to choose a
    partition \(\ul{t}=t_0<\dots<t_K=\ol{t}\) whose interval lengths satisfy
    \(t_{k+1}-t_k\le\tau\). We may choose it such that
    \(K\le\ceil{T/\tau}+1\).
    Using the continuity of \(f\) (Lemma \ref{lem: f Lipschitz with respect to alpha norm}) and the
    uniform bounds on \(x\) and \(y\) from \eqref{eq: uniform bounds on x} we get
    that for all \(k\)
    \begin{equation}
        \label{eq: bound on f(x) - f(y) alpha norm}    
        \begin{aligned}
            \holder{f(x) - f(y)}_{\alpha,k}
            &\le \ref{const: f_bound}(1 + \norm{x}_{\alpha, k} + \norm{y}_{\alpha, k}) (\holder{x-y}_{\alpha,k} + \abs{x_{t_k}-y_{t_k}})
            \\
            &\le \underbrace{\ref{const: f_bound}(1+ 2C_{\text{flow}}^R)}_{\eqcolon \defConst{uniform f alpha bound}}(\holder{x-y}_{\alpha,k}+ \abs{x_{t_k}-y_{t_k}}).
        \end{aligned}
    \end{equation}
    We will use the constant \ref{const: x-y hölder bound} defined in \eqref{eq: definition of x-y holder bound} to select \(\tau \le (2\ref{const: x-y hölder bound})^{-\frac1{\beta-\alpha}}\).
    Similarly to the proof of the contraction property (Lemma \ref{lem: F_k is a contraction}) we deduce,
    using Lemma \ref{lem: continuity of young integrals},
    \begin{equation}
    \label{eq: bound on x-y difference, initial condition}    
    \begin{aligned}
        \abs{x_t - y_t - x_s + y_s}
        &= \abs[\Big]{\int_s^t f(x)_u - f(y)_u \, dg_u}
        \\
        &\le (\abs{f(x)_s - f(y)_s} + C_{\alpha, \beta} \holder{f(x) - f(y)}_{\alpha,k}\abs{t-s}^\alpha) \holder{g}_\beta \abs{t-s}^{\beta}.
    \end{aligned}
    \end{equation}
    Since we do not have the same starting location, the bound on the difference at \(s\)
    is less tight. However, one has the estimates
    \begin{equation}
        \label{eq: bound on f(x) - f(y) at s}    
        \begin{aligned}
        \abs{f(x)_s - f(y)_s}
        &\le \abs{f(x)_{t_k} - f(y)_{t_k}} + \holder{f(x) - f(y)}_{\alpha,k}\tau^\alpha
        \\
        &= \abs{\sigma(t_k, w_{t_k}, x_{t_k}) - \sigma(t_k, w_{t_k}, y_{t_k})} + \holder{f(x) - f(y)}_{\alpha,k}\tau^\alpha
        \\
        &\le \underbrace{\sup_{t\in[0,T]} c(t, \abs{w_t}, \abs{w_t})}_{\le K_c^R}\abs{x_{t_k}-y_{t_k}} + \holder{f(x) - f(y)}_{\alpha,k}T^\alpha.
        \\
        &\le \bigl(K_c^R + T^\alpha\ref{const: uniform f alpha bound}\bigr)
        \bigl(\holder{x-y}_{\alpha,k} + \abs{x_{t_k}-y_{t_k}}\bigr),
        \end{aligned}
    \end{equation}
    with \(K_c^R\) as defined in Lemma \ref{lem: f Lipschitz with respect to alpha norm}.
    Using \eqref{eq: bound on f(x) - f(y) at s}, \eqref{eq: bound on f(x) -
    f(y) alpha norm} and \(\holder{g}_\beta\le R\) in \eqref{eq: bound on x-y difference, initial condition},
    we finally get the bound
    \[
        \holder{x-y}_{\alpha, k}
        \le \ref{const: x-y hölder bound}(\holder{x-y}_{\alpha,k} + \abs{x_{t_k}-y_{t_k}})\tau^{\beta-\alpha}
    \]
    with
    \begin{equation}
        \label{eq: definition of x-y holder bound}    
        \defConst{x-y hölder bound}\coloneq (K_c^R + \ref{const: uniform f alpha bound}(T^\alpha  + C_{\alpha, \beta} T^\alpha))R.
    \end{equation}
    Due to the choice of \(\tau\le (2\ref{const: x-y hölder bound})^{-\frac1{\beta-\alpha}}\) we then get
    \[\begin{aligned}
        \holder{x-y}_{\alpha, k}
        &\le \abs{x_{t_k}-y_{t_k}}
        \le (\abs{x_{t_{k-1}}-y_{t_{k-1}}} + \holder{x-y}_{\alpha, k-1}\tau^\alpha)
        \\
        &\le (1+\tau^\alpha)\abs{x_{t_{k-1}}-y_{t_{k-1}}}
        \overset{\text{induction}}\le
        (1+ \tau^\alpha)^k \abs{a-b}
        \\
        &\le \underbrace{(1+ \tau^\alpha)^{\ceil{\frac{T}\tau}}}_{\eqcolon \defConst{local Lipschitz in init}} \abs{a-b}.
    \end{aligned}
    \]
    Recall that the number of intervals \(K\) is bounded by \(\ceil{T/\tau}+1\) and therefore
    we can glue the local bounds on the Hölder seminorm together using Lemma
    \ref{lem: holder glue} to obtain
    \[
        \holder{x-y}_\alpha \le 2\ref{const: local Lipschitz in init}(\ceil{\tfrac{T}\tau}+1) \abs{a-b}.
    \]
    Consequently, we have
    \[\begin{aligned}
        \norm{x-y}_\alpha
        &= \norm{x-y}_\infty + \holder{x-y}_\alpha
        \\
        &\le \abs{a-b} + \holder{x-y}_\alpha T^\alpha + \holder{x-y}_\alpha
        \le \underbrace{(1+ 2(1+T^\alpha)\ref{const: local Lipschitz in init}(\ceil{\tfrac{T}\tau}+1))}_{\eqcolon C_{\text{init}}^R} \abs{a-b}.
    \end{aligned}
    \]
    This is Lipschitz continuity in the initial condition with constant
    \(C_{\text{init}}^R\).

\subsubsection{Proof of \ref{it: lipschitz in driving signal}: Local Lipschitz continuity in the driving signal}

    Again, we have carefully chosen the constants to be independent of \(g\) and only
    depending on the uniform bound \(R\). We will similarly prove Lipschitz continuity
    on small intervals \([t_k, t_{k+1}]\) first and then glue the bounds
    together to obtain Lipschitz continuity. Let \(x\) and \(y\) be two
    solutions to the ODE with
    \[
        x_t = a + \int_{\ul{t}}^t f(x)_s \, dg_s
        \qquad\text{and}\qquad
        y_t = a + \int_{\ul{t}}^t f(y)_s \, d\tilde g_s.
    \]
    First observe that we already have obtained some bounds for \(x,y \in B(0,R)\) and \(\norm{g}_\beta \le R\), namely
    \begin{align}
        \label{eq: uniform bound on f(x) at s}
        \abs{f(x)_s} \le \norm{f(x)}_{\infty, k} &\le {\norm{\sigma}_\infty}
        \\
        \label{eq: uniform bound on f(x) alpha norm}
        \holder{f(x)}_{\alpha, k}
        \overset{\text{Lemma \ref{lem: f Lipschitz with respect to alpha norm}}}&\le \ref{const: f_bound}(1+ \norm{x}_{\alpha})
        \overset{\eqref{eq: uniform bounds on x}}\le \ref{const: f_bound}(1+ C_{\text{flow}}^R)
    \end{align}
    While \(x\) and \(y\) are defined differently, the same arguments as
    in the previous section yield
    \begin{align}
        \label{eq: bound on f(x) - f(y) at s (repeat)}
        \abs{f(x)_s - f(y)_s}
        \overset{\eqref{eq: bound on f(x) - f(y) at s}}&\le
        \bigl(K_c^R + T^\alpha\ref{const: uniform f alpha bound}\bigr)
        \bigl(\holder{x-y}_{\alpha,k} + \abs{x_{t_k}-y_{t_k}}\bigr)
        \\
        \label{eq: bound holder of f(x) - f(y)}
        \holder{f(x) - f(y)}_{\alpha, k}
        \overset{\eqref{eq: bound on f(x) - f(y) alpha norm}}&\le
        \ref{const: uniform f alpha bound}
        \bigl(\holder{x-y}_{\alpha, k} + \abs{x_{t_k} - y_{t_k}}\bigr).
    \end{align}
    Using that the Young integral is bilinear in \((f, g)\), we obtain for \(s,t \in [t_k, t_{k+1}]\)
    \[
        x_t-y_t - x_s + y_s =  \int_s^t f(x)_u - f(y)_u \, dg_u - \int_s^t f(y)_u \, d(\tilde g_u - g_u).
    \]
    We will now bound each term individually using Lemma \ref{lem: continuity of young integrals}.
    We have for \(s,t \in [t_k, t_{k+1}]\)
    \begin{align}
        \abs[\Big]{\int_s^t f(x)_u - f(y)_u \, dg_u}
        &\le \bigl(
            \abs{f(x)_s - f(y)_s}
            + C_{\alpha, \beta} \holder{f(x) - f(y)}_{\alpha,k}\abs{t-s}^\alpha
        \bigr)
        \holder{g}_\beta \abs{t-s}^{\beta}
        \\
        &\overset{\eqref{eq: bound on f(x) - f(y) at s (repeat)},\eqref{eq: bound holder of f(x) - f(y)}}\le 
        \underbrace{
            (K_c^R + T^\alpha\ref{const: uniform f alpha bound} + C_{\alpha, \beta}\ref{const: uniform f alpha bound} T^\alpha)R
        }_{=\ref{const: x-y hölder bound} \quad \eqref{eq: definition of x-y holder bound}}
        \bigl(
            \holder{x-y}_{\alpha, k} + \abs{x_{t_k} - y_{t_k}}
        \bigr)
        \abs{t-s}^{\beta}
    \end{align}
    The bound on the second integral is simply
    \begin{align}
        \abs[\Big]{\int_s^t f(y)_u \, d(\tilde g_u - g_u)}
        &\le \bigl(
            \abs{f(y)_s}
            + C_{\alpha, \beta} \holder{f(y)}_{\alpha,k}\abs{t-s}^\alpha
        \bigr)
        \holder{\tilde g - g}_\beta \abs{t-s}^{\beta}
        \\
        \overset{\eqref{eq: uniform bound on f(x) at s}, \eqref{eq: uniform bound on f(x) alpha norm}}&\le
        \underbrace{(\norm{\sigma}_\infty + C_{\alpha, \beta} \ref{const: f_bound}(1+ C_{\text{flow}}^R) T^\alpha)}_{\eqcolon \defConst{integral f(y) bound}}
        \holder{\tilde g - g}_\beta \abs{t-s}^{\beta}.
    \end{align}
    Putting everything together, we thus have
    \[
        \holder{x-y}_{\alpha, k}
        \le \max\set{\ref{const: x-y hölder bound}, \ref{const: integral f(y) bound}}(\abs{x_{t_k} - y_{t_k}} + \holder{x-y}_{\alpha, k} + \holder{\tilde g - g}_\beta) \tau^{\beta-\alpha}
    \]
    and, for \(\tau \le (2\max\set{\ref{const: x-y hölder bound}, \ref{const: integral f(y) bound}})^{-\frac1{\beta-\alpha}}\), we thus obtain
    \[\begin{aligned}
        \holder{x-y}_{\alpha,k}
        &\le \abs{x_{t_k} - y_{t_k}} + \holder{\tilde g - g}_\beta
        \\
        &\le \abs{x_{t_{k-1}} - y_{t_{k-1}}}  + \holder{x-y}_{\alpha, k-1}\tau^\alpha + \holder{\tilde g - g}_\beta
        \\
        &\le (1+\tau^\alpha)\Bigl(\abs{x_{t_{k-1}} - y_{t_{k-1}}} + \holder{\tilde g - g}_\beta\Bigr)
        \\
        &\le (1+\tau^\alpha)^k \Bigl(\underbrace{\abs{x_{t_0} - y_{t_0}}}_{=0} + \holder{\tilde g - g}_\beta\Bigr)
        \le \underbrace{(1+\tau^\alpha)^{\ceil{\frac{T}\tau}}}_{\eqcolon \defConst{local Hölder bound}} \holder{\tilde g - g}_\beta.
    \end{aligned}
    \]
    With the same arguments as before we can glue the local bounds on the Hölder seminorm
    together to obtain
    \[
        \holder{x-y}_{\alpha}
        \le 2(\ceil{\tfrac{T}\tau}+1)\ref{const: local Hölder bound}\holder{\tilde g - g}_\beta,
    \]
    and therefore
    \[
        \norm{x-y}_\alpha
        = \norm{x-y}_\infty + \holder{x-y}_\alpha
        \le \underbrace{2(1+T^\alpha)(\ceil{\tfrac{T}\tau}+1)\ref{const: local Hölder bound}}_{\eqcolon C_{\text{driver}}^R} \holder{\tilde g - g}_\beta,
    \]
    which is Lipschitz continuity in the driving signal.

    \subsubsection{Proof of \ref{it: lipschitz in parameters}: local Lipschitz continuity in the parameters}

    With \(f(w,x)_s = \sigma(s, w_s, x_s)\) consider two solutions to the ODE \(x\) and \(y\)
    with the same initial condition \(a\) and driving signal \(g\) but different
    parameters \(w\) and \(\tilde w\) respectively, that is
    \[
        x_t = a + \int_{\ul{t}}^t f(w,x)_s \, dg_s \qquad\text{and}\qquad y_t = a + \int_{\ul{t}}^t f(\tilde w,y)_s \, dg_s.
    \]
    The proof is now very similar to that of \ref{it: lipschitz in initial condition} and \ref{it: lipschitz in driving signal}.
    Using \(\tilde R \coloneq C_{\text{flow}}^R\) as a uniform bound on the solutions \(x\) and \(y\), we have
    by Lemma \ref{lem: f Lipschitz with respect to alpha norm}:
    \begin{align}
        \holder{f(w,x) - f(\tilde w, y)}_{\alpha, k}
        &\le \holder{f(w,x) - f(w,y)}_{\alpha, k} + \holder{f(w,y) - f(\tilde w, y)}_{\alpha, k}
        \\
        &\le \ref{const: f holder bound}(1+2\tilde R)\bigl(\holder{x-y}_{\alpha, k} + \abs{x_{t_k}-y_{t_k}}\bigr)
        + \ref{const: f w diff holder bound} \norm{w-\tilde w}_{\alpha}.
    \end{align}
    And we have
    \begin{align}
        \abs{f(w,x)_s - f(\tilde w, y)_s}
        &= \abs{\sigma(s, w_s, x_s) - \sigma(s, \tilde w_s, y_s)}
        \\
        \overset{\text{Assmpt.~\ref{assmpt: sufficiently nice function}}}&\le K_c^R \bigl(\abs{x_s - y_s} + (1+2\tilde R)\abs{w_s - \tilde w_s}\bigr)
        \\
        &\le K_c^R \bigl(\holder{x-y}_{\alpha, k} T^\alpha + \abs{x_{t_k} - y_{t_k}} + (1+2\tilde R)\norm{w-\tilde w}_{\alpha}\bigr)
        \\
        &\le \underbrace{
            K_c^R\max\set{1, T^\alpha, (1+2\tilde R)}
        }_{\eqcolon \defConst{f(w,x) diff bound}}\bigl(
            \holder{x-y}_{\alpha, k} + \abs{x_{t_k} - y_{t_k}} + \norm{w-\tilde w}_{\alpha}
        \bigr)
    \end{align}
    Thus, we have by Lemma \ref{lem: continuity of young integrals}:
    \begin{align}
        \abs{x_t - y_t - x_s + y_s}
        &= \abs[\Big]{\int_s^t f(w,x)_u - f(\tilde w, y)_u \, dg_u}
        \\
        &\le \bigl(
            \abs{f(w,x)_s - f(\tilde w, y)_s} + C_{\alpha, \beta} \holder{f(w,x) - f(\tilde w, y)}_{\alpha, k}\abs{t-s}^\alpha
        \bigr) \holder{g}_\beta \abs{t-s}^{\beta}.
        \\
        &\le \ref{const: x-y w diff holder bound}
        \Bigl(\holder{x-y}_{\alpha, k} + \abs{x_{t_k} - y_{t_k}} + \norm{w-\tilde w}_{\alpha}\Bigr)\abs{t-s}^{\beta}.
    \end{align}
    with the constant
    \[
        \defConst{x-y w diff holder bound} \coloneq (\ref{const: f(w,x) diff bound} + C_{\alpha, \beta}\max\set{\ref{const: f holder bound}(1+2\tilde R), \ref{const: f w diff holder bound}}T^\alpha)R.
    \]
    With \(t_{k+1} - t_k \le \tau\), we thus get
    \[
        \holder{x-y}_{\alpha, k}
        \le \ref{const: x-y w diff holder bound}\Bigl(\holder{x-y}_{\alpha, k} + \abs{x_{t_k} - y_{t_k}} + \norm{w-\tilde w}_{\alpha}\Bigr)\tau^{\beta-\alpha}.
    \]
    We now finish with the usual arguments. We pick \(\tau \le (2\ref{const: x-y w diff holder bound})^{-\frac1{\beta-\alpha}}\) to get
    \begin{align}
        \label{eq: bound on x-y difference, parameters}
        \holder{x-y}_{\alpha, k}
        &\le \abs{x_{t_k} - y_{t_k}} + \norm{w-\tilde w}_{\alpha}
        \\
        &\le \abs{x_{t_{k-1}} - y_{t_{k-1}}} + \holder{x-y}_{\alpha, k-1}\tau^\alpha + \norm{w-\tilde w}_{\alpha}
        \\
        \overset{\eqref{eq: bound on x-y difference, parameters}}&\le (1+\tau^\alpha)\Bigl(\abs{x_{t_{k-1}} - y_{t_{k-1}}} + \norm{w-\tilde w}_{\alpha}\Bigr)
        \\
        \overset{\text{ind.}}&\le
        (1+\tau^\alpha)^k \Bigl(\underbrace{\abs{x_{t_0} - y_{t_0}}}_{=0} + \norm{w-\tilde w}_{\alpha}\Bigr)
        \le (1+\tau^\alpha)^{\ceil{\frac{T}\tau}} \norm{w-\tilde w}_{\alpha}.
    \end{align}
    Again, we recall that the number of intervals \(K\) is bounded by \(\ceil{T/\tau}+1\) and therefore
    Lemma \ref{lem: holder glue} allows us to glue the local bounds on the Hölder seminorm together and
    to get
\[
    \holder{x-y}_\alpha
    \le (1+\tau^\alpha)^{\ceil{\frac{T}\tau}}(\ceil{\tfrac{T}\tau}+1) \norm{w-\tilde w}_{\alpha}
\]
and therefore
\begin{align}
    \norm{x-y}_\alpha
    &= \norm{x-y}_\infty + \holder{x-y}_\alpha
    \\
    &\le \underbrace{\abs{x_0 - y_0}}_{=0} + \holder{x-y}_\alpha T^\alpha + \holder{x-y}_\alpha
    \\
    &\le \underbrace{(1+T^\alpha)(1+\tau^\alpha)^{\ceil{\frac{T}\tau}}(\ceil{\tfrac{T}\tau}+1)}_{\eqcolon C_{\text{param}}^R} \norm{w-\tilde w}_{\alpha}.
    \qedhere
\end{align}

\subsubsection{Technical Lemmas}

\begin{lemma}[Continuity of Young integrals]
    \label{lem: continuity of young integrals}
    Let \(\banachSpace[V]\) and \(\banachSpace[W]\) be Banach spaces and let
    \(\linOp{\banachSpace[V]}{\banachSpace[W]}\) be the space of bounded linear
    operators from \(\banachSpace[V]\) to \(\banachSpace[W]\) equipped with the
    operator norm. Let \(f \in C^\alpha([\ul{t}, \ol{t}], \linOp{\banachSpace[V]}{\banachSpace[W]})\) and \(g \in C^\beta([\ul{t},\ol{t}], \banachSpace[V])\)
    with $\alpha,\beta\in (0,1]$ and \(\alpha + \beta > 1\).
    Then, there exists a constant \(C_{\alpha,\beta}\) such that for all \(s,t\in [\ul{t}, \ol{t}]\)
    \[
        \abs[\Big]{
            \int_s^t f_u \, dg_u
            - f_s (g_t - g_s)
        }
        \le C_{\alpha, \beta} \holder{f}_{\alpha} \holder{g}_{\beta} \abs{t-s}^{\alpha + \beta}
    \]
    in particular
    \[
        \abs[\Big]{
            \int_s^t f_u \, dg_u
        }
        \le (\abs{f_s} + C_{\alpha, \beta} \holder{f}_{\alpha}\abs{t-s}^\alpha) \holder{g}_{\beta} \abs{t-s}^{\beta}
    \]
\end{lemma}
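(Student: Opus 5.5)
The plan is the classical sewing argument of Young. I first fix \(s<t\) and work with Riemann-type sums along dyadic partitions. For a partition \(\pi=\{s=u_0<\dots<u_n=t\}\), set
\[
    S_\pi\coloneq\sum_{i} f_{u_i}(g_{u_{i+1}}-g_{u_i}).
\]
Removing an interior point \(u_i\) from \(\pi\) changes the sum by exactly
\[
    \delta_i\coloneq(f_{u_{i}}-f_{u_{i-1}})(g_{u_{i+1}}-g_{u_i}),
\]
since \(f_{u_{i-1}}(g_{u_{i+1}}-g_{u_{i-1}})\) splits as \(f_{u_{i-1}}(g_{u_i}-g_{u_{i-1}})+f_{u_{i-1}}(g_{u_{i+1}}-g_{u_i})\). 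Using the operator norm on \(\linOp{\banachSpace[V]}{\banachSpace[W]}\), this increment is bounded by
\[
    \abs{\delta_i}\le\holder{f}_\alpha\holder{g}_\beta\,\abs{u_{i+1}-u_{i-1}}^{\alpha+\beta}.
\]

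Next I would run Young's point-removal argument. For a partition with \(n\ge2\) intervals, the pigeonhole principle gives an interior point with
\[
    \abs{u_{i+1}-u_{i-1}}\le \tfrac{2}{n-1}\abs{t-s}.
\]
I remove that point and iterate down to the trivial partition \(\{s,t\}\). This yields the uniform bound
\[
    \abs{S_\pi-f_s(g_t-g_s)}\le 2^{\alpha+\beta}\zeta(\alpha+\beta)\holder{f}_\alpha\holder{g}_\beta\abs{t-s}^{\alpha+\beta},
\]
which holds for every partition because \(\alpha+\beta>1\) makes the series \(\sum_n n^{-(\alpha+\beta)}\) converge. The same estimate applies on each subinterval. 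So comparing \(S_\pi\) with \(S_{\pi'}\) for a refinement \(\pi'\supseteq\pi\) costs at most \(C\holder{f}_\alpha\holder{g}_\beta\sum_{[u,v]\in\pi}\abs{v-u}^{\alpha+\beta}\le C\,\abs{\pi}^{\alpha+\beta-1}\abs{t-s}\). This tends to \(0\) as the mesh shrinks. For two arbitrary partitions I compare both with their common refinement, which shows the sums are Cauchy and that the limit defining the Young integral exists, using completeness of \(\banachSpace[W]\). Passing to the limit in the uniform bound gives the first inequality with \(C_{\alpha,\beta}=2^{\alpha+\beta}\zeta(\alpha+\beta)\).

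The second inequality then follows from the triangle inequality:
\[
    \abs{f_s(g_t-g_s)}\le\abs{f_s}\holder{g}_\beta\abs{t-s}^\beta,
\]
and I factor out \(\holder{g}_\beta\abs{t-s}^\beta\) from both terms. The main care point is the point-removal step. Interior points are only available when \(n\ge2\), and the factor from spacing \(\tfrac{2}{n-1}\) must be summed correctly, which produces the zeta constant. Existence and uniqueness of the limit independent of the partition sequence is routine once this maximal inequality is in place.
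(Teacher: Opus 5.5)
Your argument is correct. It is the standard Young point-removal (Young--Lo\`eve) argument that the paper invokes only by citation to Friz--Victoir and Friz--Hairer, and it is the same scheme, with the same constant \(\sum_{k\ge1}(2/k)^{\alpha+\beta}=2^{\alpha+\beta}\zeta(\alpha+\beta)\), that the paper itself runs in its discrete sewing lemma (Lemma~\ref{lem: discrete sewing}). Your common-refinement Cauchy step correctly supplies the existence of the limit.

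The only loose end is the case \(s>t\), where \(f_s\) is the right endpoint. There you can write \(\int_s^t f\,dg-f_s(g_t-g_s)=-\bigl(\int_t^s f\,dg-f_t(g_s-g_t)\bigr)+(f_s-f_t)(g_s-g_t)\), so your bound holds with constant \(C_{\alpha,\beta}+1\). Also, you never actually use dyadic partitions, so that phrase can be dropped.
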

\begin{proof}
    See e.g.\ \citep[Theorem 6.8]{frizMultidimensionalStochasticProcesses2010}
    or \citep[Equation (4.3)]{frizCourseRoughPaths2020}.
\end{proof}

For \(T>0\) let \(w\in C^\alpha([0,T], \banachSpace[W])\) and for \(\ul{t}, \ol{t} \in [0,T]\) let
\(x, y\in C^\alpha([\ul{t},\ol{t}], \banachSpace[X])\). Define the map
\[
    f(x)_t \coloneq f(w,x)_t \coloneq \sigma(t, w_t, x_t).
\]

\begin{mdframed}[innertopmargin=0pt]
\begin{lemma}[Lipschitz continuity and boundedness of \(f\)]
    \label{lem: f Lipschitz with respect to alpha norm}
    Let \(\sigma\colon \real\times \banachSpace[W] \times \banachSpace[X] \to \linOp{\banachSpace[V]}{\banachSpace[X]}\) satisfy Assumption \ref{assmpt: sufficiently nice function},
    Then for every \(R>0\) there exist \(\ref{const: f holder bound}, \ref{const: f_bound} > 0\)
    such that for all \(w\in C^\alpha([0,T], \banachSpace[W])\) with \(\norm{w}_\alpha \le R\)
    and all \(x,y \in C^\alpha([\ul{t},\ol{t}], \banachSpace[X])\)
    \begin{align}
        \label{eq: local Lipschitz continuity of f}
        \holder{f(y) - f(x)}_{\alpha, [\ul{t}, \ol{t}]}
        &\le \ref{const: f holder bound}
        (1+\norm{x}_{\alpha, [\ul{t}, \ol{t}]}
        + \norm{y}_{\alpha, [\ul{t}, \ol{t}]})
        \Bigl(
            \holder{y-x}_{\alpha,[\ul{t},\ol{t}]}
            +\min_{t\in [\ul{t}, \ol{t}]}\abs{x_t-y_t}
        \Bigr)
        \\
        \label{eq: local boundedness of f}
        \holder{f(x)}_{\alpha, [\ul{t}, \ol{t}]}
        &\le \ref{const: f_bound}(1+\norm{x}_{\alpha, [\ul{t}, \ol{t}]}).
    \intertext{And with the additional assumption \ref{it: extra assumption} it holds that
        for all \(R, \tilde R > 0\) there exists \(\ref{const: f w diff holder bound} > 0\) such that
        for all \(w, \tilde w \in C^\alpha([0,T], \banachSpace[W])\)
        with \(\norm{w}_\alpha, \norm{\tilde w}_\alpha \le R\)
        and all \(x\in C^\alpha([\ul{t},\ol{t}], \banachSpace[X])\) with \(\norm{x}_\alpha \le \tilde R\)
    }
        \label{eq: f Lipschitz in w}
        \holder{f(w,x) - f(\tilde w, x)}_{\alpha, [\ul{t}, \ol{t}]}
        &\le \ref{const: f w diff holder bound}\norm{w-\tilde w}_\alpha .
    \end{align}
    Moreover the constants may be chosen as
    \begin{align}
        \defConst{f holder bound}
        &\coloneq \abs{\frechet_x\sigma(0,0,0)} + 2 K_c^R(1+R)(1+T^\alpha)
        & K_c^R \coloneq \smash{\max_{\substack{t\in [0,T]\\ r,s \in [0, R]}}}
        \max\set[\big]{c(t, r, s), \mathsf c(r) }< \infty
        \\
        \defConst{f_bound}
        &\coloneq K_c^R(1+2R)
        \\
        \defConst{f w diff holder bound}
        &\coloneq \abs{\frechet_w \sigma(0, 0, 0)} + K_c^{R, \tilde R}(1+2\tilde R + 3R + T^\alpha)
    \end{align}
    with
    \[
        K_{c}^{R, \tilde{R}} \coloneq \max_{\substack{t\in [0,T]\\ r_w,s_w \in [0, R]\\ r_x,s_x \in [0, \tilde R]}}
        \max\set[\big]{\tilde c(t, r_w, s_w, r_x, s_x), \tilde{\mathsf c}(r_w, r_x)}
        < \infty.
    \]
\end{lemma}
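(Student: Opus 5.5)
The plan is to reduce all three estimates to pointwise increment bounds. Each will be proven by splitting an increment $f(x)_t-f(x)_s$ into a time part and a space/parameter part, using Assumption~\ref{assmpt: sufficiently nice function}. Throughout, $\norm{w}_\alpha\le R$ gives $\abs{w_t}\le R$ for all $t$. So every local coefficient $c(t,\abs{w_t},\abs{w_s})$ and $\mathsf c(\abs{w_t})$ is bounded by $K_c^R$, which is finite because $c$ and $\mathsf c$ are continuous on a compact set. All constants will then depend only on $R$, $T$ (and $\tilde R$), never on $x$ or $y$.

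\textbf{Local boundedness \eqref{eq: local boundedness of f}.} For $s,t\in[\ul t,\ol t]$ I will write
\[
f(x)_t-f(x)_s=\bigl(\sigma(t,w_t,x_t)-\sigma(s,w_t,x_t)\bigr)+\bigl(\sigma(s,w_t,x_t)-\sigma(s,w_s,x_s)\bigr).
\]
The first bracket is at most $K_c^R(1+\abs{x_t})\abs{t-s}^\alpha$ by part (c). By part (b), the second is at most
\[
K_c^R\bigl(\abs{x_t-x_s}+(1+2\norm{x}_\infty)\abs{w_t-w_s}\bigr)\le K_c^R\bigl(\holder{x}_\alpha+(1+2\norm{x}_\infty)R\bigr)\abs{t-s}^\alpha .
\]
Collecting terms gives a bound of the form $K_c^R(1+2R)(1+\norm{x}_\alpha)$, up to harmless constants.

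\textbf{Lipschitz estimate \eqref{eq: local Lipschitz continuity of f}.} This is the main obstacle. The difference $f(y)-f(x)$ has to be controlled in Hölder seminorm by $\holder{y-x}_\alpha$, and this is where $\frechet_x\sigma$ enters. I will use the mean value formula
\[
\sigma(t,w_t,y_t)-\sigma(t,w_t,x_t)=\int_0^1 \frechet_x\sigma\bigl(t,w_t,z^\theta_t\bigr)\,d\theta\;(y_t-x_t),\qquad z^\theta=x+\theta(y-x),
\]
and write $A_t:=\int_0^1\frechet_x\sigma(t,w_t,z^\theta_t)\,d\theta$ and $\delta:=y-x$. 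Then
\[
(f(y)-f(x))_t-(f(y)-f(x))_s=A_t(\delta_t-\delta_s)+(A_t-A_s)\delta_s .
\]
For the first term I need $\abs{A_t}$ bounded. I will get this from the Lipschitz bound on $\frechet_x\sigma$ relative to the point $(t,0,0)$, combined with Hölder continuity in $t$ relative to $(0,0,0)$. The result is $\abs{A_t}\le\abs{\frechet_x\sigma(0,0,0)}+K_c^R(\ldots)(1+\norm{x}_\infty+\norm{y}_\infty)$, so the first term is bounded by that factor times $\holder{\delta}_\alpha\abs{t-s}^\alpha$.

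For the second term I treat $A$ exactly as $f(x)$ was treated in the previous step, but with $\frechet_x\sigma$ in place of $\sigma$, using the second lines of (b) and (c). This gives
\[
\abs{A_t-A_s}\le K_c^R(1+R)(1+T^\alpha)(1+\norm{x}_\alpha+\norm{y}_\alpha)\abs{t-s}^\alpha .
\]
It remains to bound $\abs{\delta_s}$, and this is the delicate point. I will use $\abs{\delta_s}\le\min_u\abs{\delta_u}+\holder{\delta}_\alpha T^\alpha$, which explains both the $\min$ term and the $(1+T^\alpha)$ factor in the constant. The product then has the form $c(1+\norm x_\alpha+\norm y_\alpha)(\holder\delta_\alpha+\min\abs\delta)$, as claimed. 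Quantitatively the constant will not be exactly the displayed one unless I arrange the bound on $\abs{A_t}$ carefully. If needed I will absorb the factors by enlarging the constant, since only existence of the constant is used later.

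\textbf{Lipschitz in $w$ \eqref{eq: f Lipschitz in w}.} I will repeat the previous argument with the roles swapped. Set $B_t:=\int_0^1\frechet_w\sigma(t,w_t+\theta(\tilde w_t-w_t),x_t)\,d\theta$, so that $f(w,x)_t-f(\tilde w,x)_t=B_t(w_t-\tilde w_t)$. I then split the increment as before. The bound on $\abs{B_t}$ comes from $\abs{\frechet_w\sigma(0,0,0)}$ plus the $\tilde c$ and $\tilde{\mathsf c}$ bounds from part (d), with all arguments bounded by $R$ and $\tilde R$. The increment $\abs{B_t-B_s}$ is bounded using the Lipschitz-in-$(w,x)$ and Hölder-in-$t$ bounds of (d), together with $\holder{w}_\alpha,\holder{\tilde w}_\alpha\le R$ and $\holder{x}_\alpha\le\tilde R$. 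Since $\abs{w_s-\tilde w_s}\le\norm{w-\tilde w}_\alpha$ directly, no $\min$ term is needed here, and the stated form of the constant follows.
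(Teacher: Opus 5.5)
Your proposal follows the paper's proof essentially step for step. It uses the same mean-value representation with \(A_t=\int_0^1 D_x\sigma(t,w_t,z^\theta_t)\,d\theta\), the same splitting \(A_t(\delta_t-\delta_s)+(A_t-A_s)\delta_s\), the same bound on \(\abs{A_t}\) by comparison with \(D_x\sigma(0,0,0)\), and the same estimate \(\abs{\delta_s}\le\min_u\abs{\delta_u}+\holder{\delta}_\alpha T^\alpha\). The boundedness and \(w\)-Lipschitz parts are handled the same way too.

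One small correction on the constant. Treating \(A\) like \(f(x)\) gives \(\abs{A_t-A_s}\le K_c^R(1+2R)(1+\norm{x}_\alpha+\norm{y}_\alpha)\abs{t-s}^\alpha\), not \(K_c^R(1+R)(1+T^\alpha)(\cdots)\); the latter is smaller than \(K_c^R(1+2R)\) whenever \(T^\alpha(1+R)<R\). With the corrected bound, the coefficient \(\abs{D_x\sigma(0,0,0)}+K_c^R(1+R+T^\alpha)+K_c^R(1+2R)T^\alpha\) of \(\holder{y-x}_\alpha\) and the coefficient \(K_c^R(1+2R)\) of \(\min\abs{x-y}\) are both at most \(\abs{D_x\sigma(0,0,0)}+2K_c^R(1+R)(1+T^\alpha)\). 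So you recover the displayed constant exactly, with no enlarging needed.
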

\end{mdframed}

\begin{proof}
    We first prove the \textbf{local Lipschitz continuity of \(f\)} \eqref{eq: local Lipschitz continuity of f}.
    Using \(v_t^\lambda \coloneq \lambda y_t + (1-\lambda) x_t\) we have
\begin{align}
    &\abs[\big]{f(y)_t - f(x)_t - f(y)_s + f(x)_s}
    \\
    &= \abs[\big]{\sigma(t, w_t, y_t) - \sigma(t, w_t, x_t) -  \sigma(s, w_s, y_s) + \sigma(s, w_s, x_s)}
    \\
    &= \abs[\bigg]{\int_0^1 \frechet_x \sigma(t, w_t, v_t^\lambda)d\lambda\, (y_t-x_t) - \int_0^1 \frechet_x \sigma(s, w_s, v_s^\lambda) d\lambda\, (y_s-x_s)}
    \\
    \label{eq: bounds on f diffs}
    &\le \begin{aligned}[t]
        \abs[\big]{y_t-x_t &- (y_s - x_s)}\int_0^1 \abs[\big]{\frechet_x \sigma(t, w_t, v_t^\lambda)} d\lambda
        \\
        &+ \abs[\big]{y_s - x_s}\int_0^1 \abs[\Big]{\frechet_x \sigma(t, w_t, v_t^\lambda) - \frechet_x \sigma(s, w_s, v_s^\lambda)} d\lambda.
    \end{aligned}
\end{align}
\begin{steps}
    \item \textbf{Bound on first summand.} 
    The factor in front is bounded by
    \begin{equation}
        \label{eq: y-x difference, nice case}
        \abs[\big]{y_t-x_t - (y_s - x_s)}
        \le \holder{y - x}_\alpha \abs{t-s}^\alpha.
    \end{equation}
    To bound the integral we use
    \begin{equation}
        \abs{v_t^\lambda}
        \le \lambda\abs{y_t} + (1-\lambda)\abs{x_t}
        \le \norm{y}_{\infty} + \norm{x}_{\infty}
    \end{equation}
    to get the following bound on the Fréchet derivative
    \begin{align}
        \abs[\big]{\frechet_x \sigma(t, w_t, v_t^\lambda)}
        &\le 
        \underbrace{
            \abs[\big]{\frechet_x \sigma(t, w_t, v_t^\lambda) -\frechet_x \sigma(0, w_t, v_t^\lambda)}
        }_{\le \violet{\mathsf c(\abs{w_t})} (1+\abs{v_t^\lambda}) \teal{\abs{t-0}^\alpha}}
        + \underbrace{
            \abs{\frechet_x\sigma(0, w_t, v_t^\lambda) - \frechet_x \sigma(0, 0, 0)}
        }_{
            \le \violet{c(0, \abs{w_t}, 0)}
            \bigl(\abs{v_t^\lambda - 0} + (1+\abs{v_t^\lambda}+0)\magenta{\abs{w_t - 0}}\bigr)
        }
        + \abs{\frechet_x \sigma(0, 0, 0)}
        \\
        &\le \underbrace{\Bigl(\violet{K_c^R}(1+ \magenta{R} + \teal{T^\alpha}) + \abs{\frechet_x \sigma(0, 0, 0)}\Bigr)}_{
            \eqcolon \defConst{sigma frechet derivative sup bound}
        }(1+ \norm{x}_\infty + \norm{y}_\infty)
        \label{eq: bound on frechet derivative norm}
    \end{align}
    using \(\norm{w}_\infty \le \norm{w}_\alpha \le R\). Combining \eqref{eq: bound on frechet derivative norm} with \eqref{eq: y-x difference, nice case} we get
    the following bound on the first summand in \eqref{eq: bounds on f diffs}
    \begin{equation}
        \label{eq: first summand bound}        
        \abs[\big]{y_t-x_t - (y_s - x_s)}\int_0^1 \abs[\big]{\frechet_x \sigma(t, w_t, v_t^\lambda)} d\lambda
        \le \ref{const: sigma frechet derivative sup bound}(1+ \norm{x}_\infty + \norm{y}_\infty)\holder{y-x}_\alpha \abs{t-s}^\alpha.
    \end{equation}

    \item \textbf{Bound on second summand.}
    By the triangle inequality and \eqref{eq: y-x difference, nice case}
    \begin{equation}
        \abs[\big]{y_s - x_s}
        \overset{\Delta}\le \min_{t\in [\ul{t}, \ol{t}]} \abs{y_t-x_t} + \abs{y_s - y_t - (x_s - x_t)}
        \label{eq: y-x difference, bad case}
        \overset{\eqref{eq: y-x difference, nice case}}\le \min_{t\in [\ul{t}, \ol{t}]} \abs{y_t-x_t} + \holder{y - x}_\alpha T^\alpha
    \end{equation}
    As \eqref{eq: y-x difference, nice case} is smaller for \(s\) close to \(t\), we need tighter bounds
    on the derivative difference than on the derivative itself. For this we use
    \begin{align}
        \abs{v_t^\lambda - v_s^\lambda}
        &\le \lambda\abs{y_t - y_s} + (1-\lambda)\abs{x_t - x_s}
        \\
        &\le (\holder{y}_\alpha + \holder{x}_\alpha) \abs{t-s}^\alpha.
    \end{align}
    together with \(\abs{w_t - w_s} \le \holder{w}_\alpha \abs{t-s}^\alpha\)
    and the previous bound \(\abs{v_t^\lambda} \le \norm{y}_\infty + \norm{x}_\infty\) we get
    \begin{align}
        &\abs[\big]{\frechet_x \sigma(t, w_t, v_t^\lambda) - \frechet_x \sigma(s, w_s, v_s^\lambda)}
        \\
        \overset{\Delta}&\le \underbrace{
            \abs[\big]{\frechet_x \sigma(t, w_t, v_t^\lambda) - \frechet_x \sigma(s, w_t, v_t^\lambda)}
        }_{
           \le \violet{\mathsf c(\abs{w_t})} (1+\abs{v_t^\lambda}) \abs{t-s}^\alpha
        }
        + \underbrace{\abs[\big]{\frechet_x \sigma(s, w_t, v_t^\lambda) - \frechet_x \sigma(s, w_s, v_s^\lambda)}
        }_{
            \le \violet{c(s, \abs{w_t}, \abs{w_s})} \bigl(\teal{\abs{v_t^\lambda - v_s^\lambda}}
            + (1+\abs{v_t^\lambda}+ \abs{v_s^\lambda})\magenta{\abs{w_t - w_s}}
            \bigr)
            \mathrlap{\quad 
            \text{(Assmpt.~\ref{assmpt: sufficiently nice function})}
            }
        }
        \\
        &\le \violet{K_c^R}(1+\norm{y}_\infty + \norm{x}_\infty)\abs{t-s}^\alpha
        + \violet{K_c^R} \bigl(\teal{\holder{y}_\alpha + \holder{x}_\alpha} + (1+2\norm{x}_\infty + 2\norm{y}_\infty)\magenta{\holder{w}_\alpha}\bigr)\abs{t-s}^\alpha
        \\
        \label{eq: bound on frechet derivative difference}
        &\le \underbrace{K_c^R(1+2R)}_{
            = \ref{const: f_bound}
        }
        (1+ \norm{y}_\alpha + \norm{x}_\alpha) \abs{t-s}^\alpha
    \end{align}
    using \(\holder{w}_\alpha \le \norm{w}_\alpha \le R\) and \(\norm{x}_\alpha = \norm{x}_\infty + \holder{x}_\alpha\).
    Combining \eqref{eq: y-x difference, bad case} with \eqref{eq: bound on
    frechet derivative difference} we get the following bound on the second summand in \eqref{eq: bounds on f diffs}
    \begin{align}
        &\abs[\big]{y_s - x_s}\int_0^1 \abs[\Big]{\frechet_x \sigma(t, w_t, v_t^\lambda) - \frechet_x \sigma(s, w_s, v_s^\lambda)} d\lambda
        \\
        \label{eq: second summand bound}
        &\le
        \Bigl(\min_{t\in [\ul{t}, \ol{t}]} \abs{y_t-x_t} + \holder{y - x}_\alpha T^\alpha\Bigr)
        \ref{const: f_bound}
        (1+ \norm{y}_\alpha + \norm{x}_\alpha) \abs{t-s}^\alpha.
    \end{align}
\end{steps}
Combining the bound on the first summand \eqref{eq: first summand bound} and second summand \eqref{eq: second summand bound}
in \eqref{eq: bounds on f diffs} we get \eqref{eq: local Lipschitz continuity of f}, that is
\begin{align}
    \holder{f(y)-f(x)}_\alpha
    &= \sup_{s\neq t\in [\ul{t}, \ol{t}]}\frac{\abs{f(y)_t - f(x)_t - f(y)_s + f(x)_s}}{\abs{t-s}^\alpha}
    \\
    &\le 
    \underbrace{(\ref{const: sigma frechet derivative sup bound} + \ref{const: f_bound}T^\alpha)}_{\le \ref{const: f holder bound}}
    (1+ \norm{y}_\alpha + \norm{x}_\alpha) \Bigl(\min_{t\in [\ul{t}, \ol{t}]}\abs{y_t-x_t} + \holder{y-x}_\alpha\Bigr)
\end{align}
with the constant
\begin{align}
    \ref{const: sigma frechet derivative sup bound} + \ref{const: f_bound}T^\alpha
    &= \abs{\frechet_x\sigma(0,0,0)} + K_c^R(1+ R + T^\alpha) + K_c^R(1+2R) T^\alpha
    \\
    &\le
    \abs{\frechet_x\sigma(0,0,0)} + 2 K_c^R(1+R)(1+T^\alpha)
    \\
    \overset{\text{def.}}&= \ref{const: f holder bound}.
\end{align}
For the \textbf{local boundedness of \(f\)} \eqref{eq: local boundedness of f} we simply
use Assumption \ref{assmpt: sufficiently nice function} to get
\begin{align}
    \holder{f(x)}_{\alpha, [\ul{t}, \ol{t}]}
    &= \sup_{s\neq t\in [\ul{t}, \ol{t}]}\frac{\abs{\sigma(t,w_t,x_t) - \sigma(s, w_s, x_s)}}{\abs{t-s}^\alpha}
    \\
    \overset{\Delta}&\le 
    \sup_{s\neq t\in [\ul{t}, \ol{t}]}\frac{\abs{\sigma(t,w_t,x_t) - \sigma(s, w_t, x_t)}}{\abs{t-s}^\alpha}
    + \frac{\abs{\sigma(s,w_t,x_t) - \sigma(s, w_s, x_s)}}{\abs{t-s}^\alpha}
    \\
    \overset{\text{Assmpt.~\ref{assmpt: sufficiently nice function}}}&\le
    K_c^R(1+\norm{x}_{\infty,[\ul{t}, \ol{t}]}) + K_c^R
    \Bigl(\holder{x}_{\alpha,[\ul{t}, \ol{t}]}
    + (1+2\norm{x}_{\infty, [\ul{t}, \ol{t}]})\underbrace{\holder{w}_\alpha}_{\le R}\Bigr)
    \\
    &\le \underbrace{K_c^R(1+2R)}_{=\ref{const: f_bound}}
    (1+ \norm{x}_{\alpha, [\ul{t}, \ol{t}]}).
\end{align}
Finally, we prove the \textbf{local Lipschitz continuity of \(f\) in \(w\)} \eqref{eq: f Lipschitz in w}.
The proof is similar to that of local Lipschitz continuity of \(f\) in \(x\).
We begin by defining the convex combination \(w_t^\lambda \coloneq (1-\lambda)w_t + \lambda\tilde w_t\) with \(\lambda \in [0,1]\)
such that
\begin{align}
    \abs{w_t^\lambda - w_s^\lambda}
    &\le (1-\lambda)\abs{w_t - w_s} + \lambda\abs{\tilde w_t - \tilde w_s}
    \\
    &\le (\holder{w}_\alpha + \holder{\tilde w}_\alpha) \abs{t-s}^\alpha
    \\
    &\le 2R \abs{t-s}^\alpha
    \\[1ex]
    \abs{w_t^\lambda}
    &\le (1-\lambda)\abs{w_t} + \lambda\abs{\tilde w_t}
    \\
    &\le \max\set{\norm{w}_\infty, \norm{\tilde w}_\infty}
    \\
    &\le R.
\end{align}
With the following bound on the Fréchet derivative using Assumption~\ref{assmpt: sufficiently nice function} \ref{it: extra assumption}
and the definition of \(K_c^{R, \tilde R}\)
\begin{align}
    &\abs{\frechet_w \sigma(t, w_t^\lambda, x_t)}
    \\
    &\le \underbrace{
        \abs{\frechet_w \sigma(t, w_t^\lambda, x_t) - \frechet_w \sigma(0, w_t^\lambda, x_t)}
    }_{
        \le K_c^{R, \tilde R} \abs{t-0}^\alpha
        \le K_c^{R, \tilde R} T^\alpha
    }
    + \underbrace{
        \abs{\frechet_w \sigma(0, w_t^\lambda, x_t) - \frechet_w \sigma(0, 0, 0)}
    }_{
        \le K_c^{R, \tilde R} (\abs{x_t} + \abs{w_t^\lambda})
        \le K_c^{R, \tilde R}(\tilde R + R)
    }
    + \abs{\frechet_w \sigma(0, 0, 0)}
    \\
    &\le \defConst{frechet w sigma sup bound} \coloneq K_c^{R, \tilde R}(\tilde R + R + T^\alpha) + \abs{\frechet_w \sigma(0, 0, 0)}.
\end{align}
we get 
\begin{align}
    &\abs{f(w,x)_t - f(w,x)_s - f(\tilde w, x)_t + f(\tilde w, x)_s}
    \\
    &= \abs{\sigma(t, w_t, x_t) - \sigma(t, \tilde w_t, x_t) - \sigma(s, w_s, x_s) + \sigma(s, \tilde w_s, x_s)}
    \\
    &= \abs[\bigg]{\int_0^1 \frechet_w \sigma(t, w_t^\lambda, x_t)d\lambda\,(w_t - \tilde w_t) - \int_0^1 \frechet_w \sigma(s, w_s^\lambda, x_s) d\lambda\,(w_s - \tilde w_s)}
    \\
    &\le \begin{aligned}[t]
        &\underbrace{
            \abs{w_t - \tilde w_t - (w_s - \tilde w_s)}
        }_{
            \le \holder{w-\tilde w}_\alpha \abs{t-s}^\alpha
        }
        \int_0^1 \underbrace{\abs{\frechet_w \sigma(t, w_t^\lambda, x_t)}}_{\le \ref{const: frechet w sigma sup bound}} d\lambda
        \\
        &\quad + \underbrace{\abs{w_s - \tilde w_s}}_{
            \le \norm{w -\tilde w}_\infty
        }\int_0^1
        \underbrace{
            \abs{\frechet_w \sigma(t, w_t^\lambda, x_t) - \frechet_w \sigma(s, w_s^\lambda, x_s)}
        }_{
            \begin{aligned}
                & \scriptstyle
                \le K_c^{R, \tilde R}(\abs{t-s}^\alpha + \abs{x_t - x_s} + \abs{w_t^\lambda - w_s^\lambda})
                \mathrlap{\quad\text{(Assmpt.~\ref{assmpt: sufficiently nice function} \ref{it: extra assumption} + \ref{it: (nice function) locally Hölder continuous})}}
                \\
                & \scriptstyle
                \le K_c^{R, \tilde R}(1+ \holder{x}_\alpha + \holder{w}_\alpha + \holder{\tilde w}_\alpha)\abs{t-s}^\alpha
                \\
                & \scriptstyle
                \le K_c^{R, \tilde R}(1+\tilde R + 2R)\abs{t-s}^\alpha
            \end{aligned}
        } d\lambda
    \end{aligned}
    \\
    &\le \norm{w-\tilde w}_\alpha \bigl(\ref{const: frechet w sigma sup bound} + K_c^{R, \tilde R}(1+\tilde R + 2R)\bigr)\abs{t-s}^\alpha.
\end{align}
And consequently
\[
    \holder{f(w,x) - f(\tilde w, x)}_\alpha
    \le \norm{w-\tilde w}_\alpha \underbrace{\bigl(\ref{const: frechet w sigma sup bound} + K_c^{R, \tilde R}(1+\tilde R + 2R)\bigr)}_{= \ref{const: f w diff holder bound}}.
\]
This proves the final claim.
\end{proof}

\subsection{Proof of Theorem \ref{thm: convergence of euler scheme}}

The heart of the proof in sup-norm convergence is an incremental restart of the flow
at the Euler method points and the fact that the flow is locally Lipschitz in the
initial condition. But since we only have \emph{local} Lipschitz continuity,
we need to carefully construct a sufficiently large ball to encompass both
the ODE solution and the Euler discretization.
For a fixed \(R>0\) that bounds the driver \(g\), the parameter \(w\) and
initial condition \(a\) we define 
\begin{equation}
    \label{eq: r(R) definition}    
    r(R) \coloneq 4 \max\set{C_{\text{flow}}^R, R} \ge 4 \norm{x}_\alpha.
\end{equation}
Then for all \(s,t \in [0,T]\) with \(\abs{t-s} \le \abs{\pi}\) we have by the
local Hölder bound on the flow from Theorem \ref{thm: differential equation solution existence and uniqueness} \ref{it: flow bound}
\begin{align}
    \abs{\flow(b; s,t)}
    &\le \abs{b} + \abs{\flow(b; s, t) - \flow(b; s, s)}
    \\
    &\le \frac{r(R)}2 + C_{\text{flow,loc}}^{r(R)} \abs{\pi}^{\beta-\alpha}
    && \text{for}\quad \abs{b} \le \tfrac{r(R)}2
    \\
    \label{eq: bound on flow}
    &\le r(R) 
    && \text{for}\quad \abs{\pi} \le \bigl(\tfrac{r(R)}{2C_{\text{flow,loc}}^{r(R)}}\bigr)^{\frac1{\beta-\alpha}}
\end{align}
For \(b\) and \(\abs{\pi}\) selected to satisfy \eqref{eq: bound on flow} we
moreover have by Lemma \ref{lem: f Lipschitz with respect to alpha norm}
that for \(\abs{t-s} \le \abs{\pi}\)
\begin{align}
    \label{eq: bound on f at flow}    
    \holder{f(\flow(b; s, \cdot))}_{\alpha, [s,t]}
    &\le \ref{const: f_bound}(1+\norm{\flow(b; s, \cdot)}_{\alpha, [s,t]})
    \\
    &\le \ref{const: f_bound}(1+\underbrace{\norm{\flow(b; s, \cdot)}_{\infty, [s,t]}}_{\le r(R) \quad \eqref{eq: bound on flow}} + \underbrace{\holder{\flow(b;s, \cdot)}_{\alpha}}_{\le C_{\text{flow}}^{r(R)}})
    \\[-2ex]
    &\le\defConst{uniform bound on f_alpha}.
\end{align}
with \(\ref{const: uniform bound on f_alpha} \coloneq \ref{const: f_bound}(1+r(R) + C_{\text{flow}}^{r(R)})\).
To ensure that \eqref{eq: bound on flow} is
in force we select
\begin{equation}
    \label{eq: tau definition}
    \tau
    \coloneq \min\set[\big]{
        1,
        \underbrace{
            (\tfrac{r(R)}{2C_{\text{flow,loc}}^{r(R)}})^{\frac1{\beta-\alpha}},
            (\tfrac{r(R)}{4C_{\text{Euler}}^{R,\alpha, 1}})^{\frac1{\alpha+\beta -1}}
        }_{\text{sup-norm bound (\ref{step: discrete sup-norm bound})}},
        \underbrace{
            \tfrac12(2K_{\mathrm{sew}}^R)^{-1/\beta}
        }_{\text{Hölder bound} \mathrlap{\text{ (\ref{step: discrete Hölder bound})}}}
    },
\end{equation}
with \(
    C_{\text{Euler}}^{R,\alpha, 1} \coloneq C_{\text{init}}^{r(R)} C_{\alpha, \beta} \ref{const: uniform bound on f_alpha} R T
\), where \(C_{\text{init}}^{r(R)}\) and  \(2C_{\text{flow,loc}}^{r(R)}\) are the constants
from Theorem \ref{thm: differential equation solution existence and uniqueness} and
\(K_{\mathrm{sew}}^R\) is the constant from the discrete sewing lemma (Lemma \ref{lem: discrete sewing}).
If we can keep the Euler discretization within the ball of radius \(\frac{r(R)}2\), then 
for \(\abs{\pi}\le \tau\) we can apply \eqref{eq: bound on flow}. This turns
out to be possible. Indeed we will prove in the first
step that a constant  \(C_{\text{Euler}}^{R, \alpha,1}>0\) exists such that
for all \(\abs{\pi} \le \tau\) we have the uniform bound
\begin{equation}
    \label{eq: uniform bound on euler scheme}
    \norm{x^\pi - x}_{\infty, \pi} 
    \le C_{\text{Euler}}^{R, \alpha,1}
    \abs{\pi}^{\alpha+\beta -1}
    \qquad\text{and}\qquad
    \norm{x^\pi}_{\infty, \pi} \le \tfrac{r(R)}2.
\end{equation}
with \(\norm{x^\pi - x}_{\infty, \pi} \coloneq \sup_{k} \abs{x^\pi_k - x_{t_k}}\)
and \(\norm{x^\pi}_{\infty, \pi} \coloneq \sup_k \abs{x^\pi_k}\).
\begin{steps}
\item\label{step: discrete sup-norm bound} \textbf{Bound in discrete sup-norm.} 
We prove \eqref{eq: uniform bound on euler scheme} by induction.
That is, for all \(k\in \set{0,\dots,n}\) we show
\[
    \abs{x^\pi_k - x_{t_k}} = \abs{x^\pi_k - \flow(a; 0, t_k)}
    \le C_{\text{Euler}}^{R,\alpha, 1} \abs{\pi}^{\alpha+\beta -1}
    \quad\text{and} \quad \abs{x^\pi_k} \le \tfrac{r(R)}2.
\]
The second claim is needed to ensure we can apply \eqref{eq: bound on flow}
and \eqref{eq: bound on f at flow} in the induction step and
we will also require this uniform bound on the Euler discretization in
later proof steps.

The induction start \(k=0\) is trivial, since \(x^\pi_0 = a = \flow(a; 0, 0)\). For the induction step we have 
\[\begin{aligned}
    \abs{x^\pi_k - \flow(a; 0, t_k)}
    &\le \sum_{l=1}^{k} \abs[\big]{\flow(x^\pi_{l}; t_{l}, t_k) - \flow(x^\pi_{l-1}; t_{l-1}, t_k)}
    \\
    &\le \sum_{l=1}^k \abs[\big]{\flow(x^\pi_{l}; t_{l}, t_k) - \flow(\flow(x^\pi_{l-1}; t_{l-1}, t_l); t_{l}, t_k)}
    \\
    &= \abs{x^\pi_k - \flow(x^\pi_{k-1}; t_{k-1}, t_k)}
    + \sum_{l=1}^{k-1} \underbrace{
        \abs[\big]{\flow(x^\pi_{l}; t_{l}, t_k) - \flow(\flow(x^\pi_{l-1}; t_{l-1}, t_l); t_{l}, t_k)}
    }_{
        \le C_{\text{init}}^{r(R)} \abs{x^\pi_l - \flow(x^\pi_{l-1}; t_{l-1}, t_l)}
        \qquad \mathrlap{\text{Induct. + \eqref{eq: bound on flow}} }
    }
    \\[-1ex]
    &\le C_{\text{init}}^{r(R)} \sum_{l=1}^k  \abs{x^\pi_l - \flow(x^\pi_{l-1}; t_{l-1}, t_l)}.
\end{aligned}\]
Here \(C_{\text{init}}^{r(R)}\) is the constant from the local Lipschitz continuity in the initial condition
(see \ref{it: lipschitz in initial condition} of Theorem \ref{thm: differential equation solution existence and uniqueness}),
which is applicable since \(x^\pi_l, \flow(x^\pi_{l-1}; t_{l-1}, t_l) \in B(0, r(R))\) due to \eqref{eq: bound on flow} and the induction hypothesis.
Now we may bound the individual terms
\[\begin{aligned}
    \abs{x^\pi_l - \flow(x^\pi_{l-1}; t_{l-1}, t_l)}
    &\le \abs[\Big]{\sigma(t_{l-1}, w_{t_{l-1}}, x^\pi_{l-1}) (g_{t_l} - g_{t_{l-1}}) - \int_{t_{l-1}}^{t_l} \sigma(s, w_s, \flow(x^\pi_{l-1}; t_{l-1}, s)) \, dg_s}
    \\
    \overset{\text{Lem.~\ref{lem: continuity of young integrals}}}&\le
    C_{\alpha, \beta} \underbrace{\holder[\big]{f(\flow(x^\pi_{l-1}; t_{l-1}, \cdot))}_{\alpha, [t_{l-1}, t_l]}}_{\le \ref{const: uniform bound on f_alpha}}
    \underbrace{\holder{g}_\beta}_{\le R} \abs{t_l - t_{l-1}}^{\alpha + \beta},
\end{aligned}\]
using \eqref{eq: bound on f at flow} in the last step. This bound on
the individual terms together with
\[\begin{aligned}
    \abs{t_l - t_{l-1}}^{\alpha+\beta}
    &= \abs{t_l - t_{l-1}}\abs{t_l - t_{l-1}}^{\alpha+\beta-1}
    \\
    &\le \abs{t_l - t_{l-1}}\abs{\pi}^{\alpha+\beta-1}
\end{aligned}
\]
results in
\begin{equation}
    \label{eq: Euler scheme convergence, induction step finish}    
    \abs{x^\pi_k - \flow(a; 0, t_k)}
    \le C_{\text{init}}^{r(R)} C_{\alpha, \beta} \ref{const: uniform bound on f_alpha} R \underbrace{\sum_{l=1}^k \abs{t_l - t_{l-1}}}_{=t_k \le T}\abs{\pi}^{\alpha+\beta -1}
    \le C_{\text{Euler}}^{R,\alpha,1} \abs{\pi}^{\alpha+\beta -1}
\end{equation}
using the constant \(C_{\text{Euler}}^{R,\alpha,1} = C_{\text{init}}^{r(R)} C_{\alpha, \beta} \ref{const: uniform bound on f_alpha} R T\).
This proves the first claim. Using \(\norm{x}_\infty \le \frac{r(R)}{4}\) by
definition \eqref{eq: r(R) definition} we also have
the second claim. Indeed we have
\[
    \abs{x^\pi_k}
    \le \abs{x^\pi_k - \flow(a; 0, t_k)} + \abs{\flow(a; 0, t_k)}
    \le \tfrac{r(R)}{4} + \norm{x}_\infty \le \tfrac{r(R)}{2},
\]
due to \(\abs{\pi} \le (\frac{r(R)}{4C_{\text{Euler}}^{R,\alpha,1}})^{\frac1{\alpha+\beta
-1}}\) combined with \eqref{eq: Euler scheme convergence, induction step finish} for the difference. This completes the induction and we thus have proved \eqref{eq: uniform bound on euler scheme}.

\item \textbf{Bound in sup-norm.}
Next we bound the piecewise linear interpolation
\(\bar x^\pi\). For \(t\in [t_k, t_{k+1})\) let \(\lambda_t =
\frac{t-t_k}{t_{k+1}-t_k}\). Then we have
\[
    \bar x^\pi_t = (1-\lambda_t)x^\pi_{k} + \lambda_t x^\pi_{k+1} \qquad t\in [t_k, t_{k+1}).
\]
Define an interpolated version of the ODE solution \(\bar x_t \coloneq (1-\lambda_t)x_{t_k} + \lambda_t x_{t_{k+1}}\). Then we have
\begin{alignat}{3}
    \abs{\bar x^\pi_t - x_t}
    &\le \abs{\bar x_t^\pi - \bar x_t}
    + \abs{\bar x_t - x_t}
    \\
    &\le \lambda_t \underbrace{\abs{x^\pi_{k+1} - x_{t_{k+1}}}}_{\le C_{\text{Euler}}^{R,\alpha,1} \abs{\pi}^{\alpha+\beta-1}}
    + (1-\lambda_t) \underbrace{\abs{x^\pi_k - x_{t_k}}}_{\le C_{\text{Euler}}^{R,\alpha,1} \mathrlap{\abs{\pi}^{\alpha+\beta-1}}}
    + \lambda_t \underbrace{\abs{x_{t_{k+1}} - x_t}}_{\le \norm{x}_\alpha \abs{\pi}^\alpha}
    + (1-\lambda_t) \underbrace{\abs{x_{t_k} - x_t}}_{\le \norm{x}_\alpha \abs{\pi}^\alpha}
    \\
    &\le C_{\text{Euler}}^{R,\alpha,1} \abs{\pi}^{\alpha+\beta -1} + \norm{x}_\alpha \abs{\pi}^\alpha
    \\
    &\le \underbrace{(C_{\text{Euler}}^{R,\alpha,1} + C_{\text{flow}}^R \tau^{1-\beta})}_{\eqcolon C_{\mathrm{Euler}}^{R, \alpha, 2}}\abs{\pi}^{\alpha+\beta -1} .
\end{alignat}
using \(\norm{x}_\alpha \le C_{\text{flow}}^R\) and \(\abs{\pi} \le \tau\) in the last inequality.
This proves the bound of the sup-norm 
\begin{equation}
    \label{eq: Euler scheme convergence, sup-norm}
    \norm{\bar x^\pi - x}_\infty \le C_{\mathrm{Euler}}^{R, \alpha, 2} \abs{\pi}^{\alpha+\beta -1}
    \qquad \forall \abs{\pi} \le \tau.
\end{equation}

\item\label{step: discrete Hölder bound} \textbf{Bound of discrete Hölder semi-norm.} For the bound in the Hölder norm we want to apply
Lemma \ref{lem: uniform to holder bound} with \(\epsilon = \alpha -
\alpha'\). This requires a uniform bound on the Hölder semi-norms of \(x\) and
\(\bar x^\pi\). For \(x\) we already have \(\holder{x}_\alpha \le
C_{\text{flow}}^R\) by Theorem \ref{thm: differential equation solution
existence and uniqueness} \ref{it: flow bound}. For 
the bound on \(\bar x^\pi\) we have to work.

From the discretization \(\pi=\set{t_0, \dots, t_n}\) select a subset
of anchors \(\pi'= \set{T_0, \dots, T_m}\) with \(T_0 = 0\),
\(T_m = T\) and
\[
    \tau \le T_i - T_{i-1} \le 2\tau, \quad\forall i \in \set{1, \dots, m-1} \qquad \text{and}\qquad T_m - T_{m-1} \le 2\tau.
\]
This is possible since \(\abs{\pi} \le \tau\) implies there exists \(t_k\)
between \(T_i+\tau\) and \(T_i + 2\tau\) which may be selected as
\(T_{i+1}\). This selection of anchors ensures that their number \(m\) is uniformly
bounded independent of \(\pi\). Indeed, we have
\begin{equation}
    \label{eq: bound on number of anchors}
    T \ge T_{m-1} = \sum_{i=1}^{m-1} (T_i - T_{i-1}) \ge (m-1) \tau
    \quad\implies\quad
    m \le \tfrac{T}{\tau}+1.
\end{equation}

Using \(z_i \coloneq \sigma(t_i, w_{t_i}, x^\pi_i)\) and
\(A_{i,j} \coloneq z_i (g_{t_j} - g_{t_i})\)
we may express the increments of \(x^\pi\) as
\[
    x_j^\pi - x_i^\pi
    = \sum_{l=i}^{j-1} z_l (g_{t_{l+1}} - g_{t_l})
    = \sum_{l=i}^{j-1} A_{l, l+1}.
\]
Our first step is to obtain a uniform Hölder bound on \(x^\pi\) on the intervals spanned
by the anchors. For this let \(i,j\) be such that \([t_i, t_j) \subseteq
[T_{k-1}, T_k) = [t_l, t_{l'})\) for some \(k\), \(l\) and \(l'\). Then we
have by the discrete sewing Lemma (Lemma \ref{lem: discrete sewing})
\begin{align}
    \abs{x_j^\pi - x_i^\pi}
    &\le \abs{x_j^\pi - x_i^\pi - A_{i,j}}
    + \abs{A_{i,j}}
    \\
    &= \abs[\Big]{
        \sum_{l=i}^{j-1} A_{l, l+1}
        - A_{i,j}
    }
    + \abs{z_i (g_{t_j} - g_{t_i})}
    \\
    \overset{\text{Lem.~\ref{lem: discrete sewing}}}&\le
    K_{\mathrm{sew}}^R (1+ \norm{x^\pi}_{\alpha, [i:j]}) \abs{t_j - t_i}^{\alpha+\beta}
    + \norm{\sigma}_\infty \holder{g}_\beta \abs{t_j - t_i}^\beta
    \\
    &\le
    K_{\mathrm{sew}}^R (1+ \norm{x^\pi}_{\alpha, [l:l']}) \abs{t_j - t_i}^{\alpha+\beta}
    + \norm{\sigma}_\infty R \abs{t_j - t_i}^\beta,
\end{align}
where
\(\norm{x^\pi}_{\alpha, [l:l']}\coloneq \norm{x^\pi}_{\infty, [l:l']} + \holder{x^\pi}_{\alpha, [l:l']}\)
with
\[
    \norm{x^\pi}_{\infty, [l:l']}
    \coloneq \sup_{l\le k \le l'} \abs{x^\pi_k}
    \qquad \text{and}\qquad
    \holder{x^\pi}_{\alpha, [l:l']}
    \coloneq \sup_{l\le i <j \le l'} \frac{\abs{x_j^\pi - x_i^\pi}}{\abs{t_j - t_i}^\alpha}.
\]
Due to \(\abs{t_j - t_i} \le \abs{T_k - T_{k-1}} \le 2\tau\) we thus have
\begin{align}
    \holder{x^\pi}_{\alpha, [l:l']}
    &= \sup_{i\neq j \in [l:l']} \frac{\abs{x_j^\pi - x_i^\pi}}{\abs{t_j - t_i}^\alpha}
    \\
    &\le K_{\mathrm{sew}}^R (1+ \norm{x^\pi}_{\infty, [l:l']} + \holder{x^\pi}_{\alpha, [l:l']}) \abs{t_j - t_i}^{\beta}
    + \norm{\sigma}_\infty R \abs{t_j - t_i}^{\beta-\alpha}
    \\
    &\le K_{\mathrm{sew}}^R (1+ r(R) + \holder{x^\pi}_{\alpha, [l:l']}) (2\tau)^{\beta}
    + \norm{\sigma}_\infty R (2\tau)^{\beta-\alpha}
\end{align}
Due to the selection of \(\tau \le \frac12(2K_{\mathrm{sew}}^R)^{-1/\beta}\) in \eqref{eq: tau definition} we have
\(K_{\mathrm{sew}}^R (2\tau)^{\beta} \le \frac12\) and therefore
\[
    \holder{x^\pi}_{\alpha, [l:l']}
    \le \frac{K_{\mathrm{sew}}^R (1+ r(R)) (2\tau)^{\beta} + \norm{\sigma}_\infty R (2\tau)^{\beta-\alpha}}{1 - K_{\mathrm{sew}}^R (2\tau)^{\beta}}
    \le 1+ r(R) + 2\norm{\sigma}_\infty R (2\tau)^{\beta-\alpha}
    \eqcolon M_0.
\]
Since this bound does not depend on \([T_{k-1}, T_k)\) we thereby have a uniform
Hölder bound on \(x^\pi\) on each of these intervals. Since the number of these intervals
is uniformly bounded independent of \(\pi\), we only need to glue these bounds together to get a uniform
Hölder bound on \(x^\pi\) on the entire interval \([0,T]\). For this we
use
\begin{equation}
    \label{eq: concave bound}
    \sum_{i=1}^n y_i^{\alpha}
    = n\sum_{i=1}^n \tfrac1n y_i^{\alpha}
    \overset{\substack{\text{concave}\\\text{Jensen}}}\le
    n\Bigl(\sum_{i=1}^n \tfrac1n y_i\Bigr)^{\alpha}
    = n^{1-\alpha} \Bigl(\sum_{i=1}^n y_i\Bigr)^{\alpha}.
\end{equation}
In the following we will write \(x_{T_k}^\pi\coloneq x_{l}^\pi\) for \(T_k=t_l\) to avoid cumbersome notation.
Then we have for \(i,j\) with \(t_i \in [T_{k-1}, T_k)\) and \(t_j \in [T_{k'}, T_{k'+1})\) such that \(k \le k'\)
\begin{align}
    \abs{x_j^\pi - x_i^\pi}
    &\le \abs{x_j^\pi - x_{T_{k'}}^\pi} + \sum_{l=k}^{k'-1} \abs{x_{T_{l+1}}^\pi - x_{T_l}^\pi} + \abs{x_{T_{k}}^\pi - x_i^\pi}
    \\
    &\le M_0(t_j - T_{k'})^{\alpha} + M_0 \sum_{l=k}^{k'-1} (T_{l+1} - T_l)^{\alpha} + M_0 (T_k - t_i)^{\alpha}
    \\
    \overset{\eqref{eq: concave bound}}&\le M_0 \underbrace{(k'-k+2)^{1-\alpha}}_{\le m^{1-\alpha}} (t_j - t_i)^{\alpha}
    \\
    \overset{\eqref{eq: bound on number of anchors}}&\le \underbrace{M_0 \bigl(\tfrac{T}{\tau}+1\bigr)^{1-\alpha}}_{\eqcolon M_1} (t_j - t_i)^{\alpha}.
\end{align}
Due to \(M_0 \le M_1\) the constant \(M_1\) may also be used in the case of
\(t_i, t_j \in [T_{k-1}, T_k)\) and we obtain a discrete uniform Hölder
bound on \(x^\pi\)
\[
    \holder{x^\pi}_{\alpha,\pi} \coloneq  \sup_{0\le i < j \le n} \frac{\abs{x_j^\pi - x_i^\pi}}{\abs{t_j - t_i}^{\alpha}} \le M_1.
\]

\item\textbf{Bound of the Hölder norm.}
For the interpolation \(\bar x^\pi\) we have for \(t,s\in [t_k, t_{k+1})\)
\[
    \abs{\bar x^\pi_t - \bar x^\pi_s}
    = \frac{\abs{t-s}}{t_{k+1}-t_k} \abs{x^\pi_{k+1} - x^\pi_k}
    \le \abs{t-s} M_1 \abs{t_{k+1} - t_k}^{\alpha-1} \le M_1\abs{t-s}^{\alpha}.
\]
And for \(s\in [t_{j-1}, t_j)\) and \(t\in [t_i, t_{i+1})\) with \(j\le i\) we therefore get
\begin{align}
    \abs{\bar x^\pi_t - \bar x^\pi_s}
    &\le \abs{\bar x^\pi_t - x^\pi_i} + \abs{x^\pi_i - x^\pi_j} + \abs{x^\pi_j - \bar x^\pi_s}
    \\
    &\le M_1 (t-t_i)^{\alpha}
    + M_1 (t_i - t_j)^{\alpha} + M_1 (t_j - s)^{\alpha}
    \\
    \overset{\eqref{eq: concave bound}}&\le 3^{1-\alpha}M_1  \abs{t-s}^{\alpha}.
\end{align}
Put together we have using \(M \coloneq \max\set{3^{1-\alpha}M_1, C_{\mathrm{flow}}^R} \ge M_1\)
\[
    \holder{\bar{x}^\pi}_{\alpha}
    = \sup_{s\neq t\in [0,T]} \frac{\abs{\bar x^\pi_t - \bar x^\pi_s}}{\abs{t-s}^{\alpha}}
    \le M
\]
uniformly over \(\pi\). And for the solution of the differential equation \(x\) we have \(\holder{x}_\alpha \le
C_{\text{flow}}^R\le M\) by Theorem \ref{thm: differential equation solution
existence and uniqueness} \ref{it: flow bound}. With the application of Lemma \ref{lem: uniform to holder bound} with \(\epsilon = \alpha - \alpha'\) we thus have
\[
    \holder{\bar x^\pi - x}_{\alpha'}
    \le 2M^{\frac{\alpha'}{\alpha}} \norm{\bar x^\pi - x}_\infty^{1-\frac{\alpha'}{\alpha}}
\]
This implies
\begin{align}
    \norm{\bar x^\pi - x}_{\alpha'}
    &= \norm{\bar x^\pi - x}_\infty + \holder{\bar x^\pi - x}_{\alpha'}
    \\
    &\le \norm{\bar x^\pi - x}_\infty + 2M^{\frac{\alpha'}{\alpha}} \norm{\bar x^\pi - x}_\infty^{1-\frac{\alpha'}{\alpha}}
    \\
    \overset{\text{\eqref{eq: Euler scheme convergence, sup-norm}}}&\le 
        \Bigl(C_{\mathrm{Euler}}^{R, \alpha, 2} \abs{\pi}^{\frac{\alpha'}{\alpha}(\alpha+\beta -1)}
        + 2M^{\frac{\alpha'}{\alpha}} (C_{\mathrm{Euler}}^{R, \alpha, 2})^{1-\frac{\alpha'}{\alpha}}\Bigr)
    \abs{\pi}^{(1-\frac{\alpha'}{\alpha})(\alpha+\beta - 1)}
    \\
    &\le \underbrace{
        \max\set{1, C_{\mathrm{Euler}}^{R, \alpha, 2}} (1+ 2M)
    }_{\eqcolon C_{\mathrm{Euler}}^{R, \alpha}}
    \abs{\pi}^{(1-\frac{\alpha'}{\alpha})(\alpha+\beta - 1)},
\end{align}
where we use \(\alpha' < \alpha\) with \(M \ge M_0 \ge 1\) and \(\abs{\pi} \le \tau \le 1\) in the last inequality.
\qed
\end{steps}

The key ingredient to turn the uniform convergence into convergence in
Hölder space is the following Lemma.

\begin{lemma}[Uniform to Hölder bound]
\label{lem: uniform to holder bound}
Let $\alpha\in (0,1]$. For \(x,y \in C^\alpha([0,T], \banachSpace[X])\) assume there exists \(M>0\) such that
\(\holder{x}_\alpha, \holder{y}_\alpha \le M\); then for all
\(\epsilon\in (0, \alpha)\) we have
\[
    \holder{x - y}_{\alpha-\epsilon} \le 2M^{1-\frac{\epsilon}{\alpha}} \norm{x - y}_\infty^{\frac{\epsilon}{\alpha}}.
\]
\end{lemma}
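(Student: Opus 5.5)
Set \(u \coloneq x-y\). Then \(\holder{u}_\alpha \le \holder{x}_\alpha + \holder{y}_\alpha \le 2M\), and we write \(D \coloneq \norm{u}_\infty\). The plan is a standard interpolation between two bounds on the increments \(\abs{u_t-u_s}\): the Hölder bound, which is good for short increments, and the sup-norm bound, which is good for long increments. Fix \(s\neq t\) in \([0,T]\) and put \(\delta\coloneq\abs{t-s}\). We have both
\[
    \abs{u_t-u_s}\le 2M\delta^\alpha
    \qquad\text{and}\qquad
    \abs{u_t-u_s}\le 2D .
\]
Taking a geometric mean of these with weights \(1-\frac{\epsilon}{\alpha}\) and \(\frac{\epsilon}{\alpha}\) (that is, \(\min\{a,b\}\le a^{\theta}b^{1-\theta}\) for \(\theta\in[0,1]\)) gives
\[
    \abs{u_t-u_s}
    \le (2M\delta^\alpha)^{1-\frac{\epsilon}{\alpha}}(2D)^{\frac{\epsilon}{\alpha}}
    = 2M^{1-\frac{\epsilon}{\alpha}}D^{\frac{\epsilon}{\alpha}}\,\delta^{\alpha-\epsilon}.
\]
Dividing by \(\delta^{\alpha-\epsilon}\) and taking the supremum over \(s\neq t\) yields
\[
    \holder{x-y}_{\alpha-\epsilon}\le 2M^{1-\frac{\epsilon}{\alpha}}\norm{x-y}_\infty^{\frac{\epsilon}{\alpha}},
\]
which is exactly the claim.

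The only step that needs care is bookkeeping of the constants. The factor \(2\) appears in both bounds: once from \(\holder{u}_\alpha\le 2M\) and once from \(\abs{u_t-u_s}\le\abs{u_t}+\abs{u_s}\le 2D\). Because the weights sum to one, these two factors combine to \(2^{1-\frac{\epsilon}{\alpha}}\,2^{\frac{\epsilon}{\alpha}}=2\), which matches the stated constant. The degenerate case \(D=0\) is consistent: then \(u\equiv 0\) and both sides vanish.

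I do not expect a real obstacle here. The lemma is elementary, and no earlier result of the paper is needed beyond the definition of the Hölder seminorm and the triangle inequality.
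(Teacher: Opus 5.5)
Your proof is correct and follows the paper's argument: you bound each increment of \(x-y\) both by \(2\norm{x-y}_\infty\) and by \(2M\abs{t-s}^\alpha\), then interpolate using \(\min\{a,b\}\le a^{\theta}b^{1-\theta}\) with \(\theta=1-\frac{\epsilon}{\alpha}\). Your constant bookkeeping and the treatment of the degenerate case \(\norm{x-y}_\infty=0\) are also correct.
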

\begin{proof}
Let \(e_t \coloneq x_t - y_t\) be the error of the approximation. Then
\[
    \abs{e_t - e_s}
    \le \abs{e_t} + \abs{e_s}
    \le 2\norm{e}_\infty
\]
and due to \(\holder{x}_\alpha, \holder{y}_\alpha \le M\)
\[
    \abs{e_t - e_s}
    \le \abs{x_t - x_s} + \abs{y_t - y_s}
    \le 2M \abs{t-s}^\alpha.
\]
This implies the error difference is bounded by the minimum and we may bound this
by any combination of the two bounds. That is
\begin{align}
    \abs{e_t - e_s}
    \le \min\set{2\norm{e}_\infty, 2M \abs{t-s}^\alpha}
    \le 2\norm{e}_\infty^{\frac{\epsilon}{\alpha}} (M\abs{t-s}^{\alpha})^{1-\frac{\epsilon}{\alpha}}
    = 2M^{1-\frac{\epsilon}{\alpha}}\norm{x-y}_\infty^{\frac{\epsilon}{\alpha}} \abs{t-s}^{\alpha-\epsilon}.
\end{align}
But this implies
\(
    \holder{x - y}_{\alpha-\epsilon}
    = \holder{e}_{\alpha-\epsilon}
    \le 2M^{1-\frac{\epsilon}{\alpha}}\norm{x-y}_\infty^{\frac{\epsilon}{\alpha}}
\), which is the claim.
\end{proof}

\paragraph*{Discrete sewing.} To prove the necessary uniform bound on the Hölder semi-norm
required for Lemma \ref{lem: uniform to holder bound} the key ingredient is the discrete
sewing Lemma \ref{lem: discrete sewing}.

For \(z_i \coloneq \sigma(t_i, w_{t_i}, x^\pi_i)\) we define the \emph{integral approximation}
\[
    A_{t_i, t_j} \coloneq A_{i,j}
    \coloneq z_i (g_{t_j} - g_{t_i})
    \Bigl(\approx \int_{t_i}^{t_j} \sigma(s, w_s, x_s) \, dg_s\Bigr).
\]
Since integrals satisfy the addition property \(\int_{t_i}^{t_j} = \int_{t_i}^s + \int_s^{t_j}\) for any \(s\in [t_i, t_j]\) we denote the
\emph{addition defect} by
\[
    \delta A_{i,k,j}
    \coloneq A_{i,j} - A_{i,k} - A_{k,j}.
\]
Using a bound on this addition defect (Lemma \ref{lem: addition defect}) we can
prove the following discrete sewing lemma that is the key ingredient to obtain a
uniform bound on the Hölder semi-norm of \(x^\pi\) on the intervals spanned by
the anchors (\ref{step: discrete Hölder bound} in the proof of Theorem \ref{thm: convergence of euler scheme}).

\begin{lemma}[Discrete sewing]
    \label{lem: discrete sewing}
    Suppose that \(\sigma\) satisfies Assumption \ref{assmpt: sufficiently nice function}
    and \(\norm{w}_\alpha, \holder{g}_\beta \le R\). Let
    \(z_k = \sigma(t_k, w_{t_k}, x^\pi_k)\) for \(x^\pi_k\) and \(t_k\) as defined in Theorem \ref{thm: convergence of euler scheme}. Then, for \(0\le i<j\le n\),
    \[
        \abs[\Bigg]{\sum_{k=i}^{j-1} z_k(g_{t_{k+1}} - g_{t_k}) - z_i (g_{t_j} - g_{t_i})} 
        = \abs[\Bigg]{\sum_{k=i}^{j-1} A_{k, k+1} - A_{i,j}} 
        \le K_{\mathrm{sew}}^R (1+ \norm{x^\pi}_{\alpha,[i:j]}) \abs{t_j - t_i}^{\alpha+\beta}
    \]
    with \(K_{\mathrm{sew}}^R \coloneq \sum_{k=1}^\infty \bigl(\frac{2}{k}\bigr)^{\alpha+\beta}K_{\delta A}^R \).
\end{lemma}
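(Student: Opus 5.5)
We prove the discrete sewing Lemma by the classical successive point-removal argument of Young/Gubinelli, in its discrete form. The argument has two ingredients. The first is the addition defect bound, Lemma~\ref{lem: addition defect}, which we take to give, for \(i\le k\le j\) inside \([i:j]\),
\[
    \abs{\delta A_{i,k,j}} \le K_{\delta A}^R\,(1+\norm{x^\pi}_{\alpha,[i:j]})\,\abs{t_j-t_i}^{\alpha+\beta}.
\]
This should follow from \(\delta A_{i,k,j} = (z_i - z_k)(g_{t_j}-g_{t_k})\). The increment \(\abs{g_{t_j}-g_{t_k}}\) is at most \(R\abs{t_j-t_i}^\beta\). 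The factor \(\abs{z_i-z_k}\) is bounded, via Assumption~\ref{assmpt: sufficiently nice function} \ref{it: (nice function) locally Lipschitz} and \ref{it: (nice function) locally Hölder continuous} and \(\norm{w}_\alpha\le R\), by a constant times \((1+\abs{x^\pi_i}+\abs{x^\pi_k})(\abs{t_k-t_i}^\alpha + \abs{x^\pi_k-x^\pi_i})\). That in turn is \(\lesssim (1+\norm{x^\pi}_{\alpha,[i:j]})\abs{t_j-t_i}^\alpha\) up to the local constants \(K_c^R\), and these are absorbed into \(K_{\delta A}^R\).

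The second ingredient is the removal procedure. Start from the partition \(P_0=\set{t_i,\dots,t_j}\) with \(N=j-i\) intervals and the Riemann-type sum \(S(P_0)=\sum_{k=i}^{j-1}A_{k,k+1}\). For a partition \(P\) with \(N'\ge 2\) intervals, there are \(N'-1\) interior points. The pairs of adjacent intervals \([t_{k^-},t_{k^+}]\) around those points have total length at most \(2(t_j-t_i)\). By pigeonhole, some interior point \(t_k\) satisfies
\[
    t_{k^+}-t_{k^-}\le \tfrac{2}{N'-1}(t_j-t_i).
\]
Removing it changes the sum by exactly \(\delta A_{k^-,k,k^+}\). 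By the defect bound, this change is at most \(K_{\delta A}^R(1+\norm{x^\pi}_{\alpha,[i:j]})\bigl(\tfrac{2}{N'-1}\bigr)^{\alpha+\beta}\abs{t_j-t_i}^{\alpha+\beta}\). Here the sup norm and discrete Hölder seminorm over \([k^-:k^+]\subseteq[i:j]\) are dominated by those over \([i:j]\), so the bound is uniform along the procedure.

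Iterate the removal until only \(\set{t_i,t_j}\) remains; at that point \(S=A_{i,j}\). Summing the changes over \(N'=N,\dots,2\) and reindexing with \(k=N'-1\) gives
\[
    \abs[\Big]{\sum_{k=i}^{j-1}A_{k,k+1}-A_{i,j}} \le \sum_{k=1}^{\infty}\bigl(\tfrac2k\bigr)^{\alpha+\beta}K_{\delta A}^R\,(1+\norm{x^\pi}_{\alpha,[i:j]})\abs{t_j-t_i}^{\alpha+\beta}.
\]
The series converges because \(\alpha+\beta>1\), which yields the constant \(K_{\mathrm{sew}}^R\).

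The main obstacle is making the defect bound uniform. The constants \(c(t,\abs{w},\abs{\tilde w})\) depend only on \(w\), which is bounded by \(R\). However, the \(x\)-dependence enters through the factor \((1+\abs{x}+\abs{\tilde x})\) multiplying \(\abs{w_{t_k}-w_{t_i}}\), and through the Hölder-in-time term \((1+\abs{x})\). This dependence must be kept linear in \(\norm{x^\pi}_{\alpha,[i:j]}\) and not quadratic. Here \(\sigma\) is only evaluated, not differentiated in \(x\), so the first line of Assumption~\ref{assmpt: sufficiently nice function} \ref{it: (nice function) locally Lipschitz} suffices. Concretely, \(\abs{x^\pi_k-x^\pi_i}\le\holder{x^\pi}_{\alpha,[i:j]}\abs{t_k-t_i}^\alpha\) is linear, and \((1+2\norm{x^\pi}_\infty)R\abs{t_k-t_i}^\alpha\) is also linear. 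The bound is therefore linear as required, and this is exactly the form used in \ref{step: discrete Hölder bound}.
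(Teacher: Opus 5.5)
Your proposal is correct and follows the paper's proof essentially step for step. The shared steps are the pigeonhole choice of an interior point whose two neighbours satisfy \(t_{k^+}-t_{k^-}\le \frac{2}{N'-1}(t_j-t_i)\), the observation that removing this point changes the sum by exactly one addition defect, the bound from Lemma~\ref{lem: addition defect} with the norm over the sub-range dominated by that over \([i:j]\), and summation against \(\sum_{k\ge1}(2/k)^{\alpha+\beta}<\infty\). The one blemish is in your side sketch of the defect lemma: the product form \((1+\abs{x^\pi_i}+\abs{x^\pi_k})(\abs{t_k-t_i}^\alpha+\abs{x^\pi_k-x^\pi_i})\) would be quadratic in \(\norm{x^\pi}_{\alpha,[i:j]}\), whereas the additive form in your last paragraph is the right one and matches the paper.
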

\begin{proof}
    Observe that the sum over \(A_{k, k+1}\) is essentially a better integral
    approximation than \(A_{i,j}\) that corresponds to just the end-points.
    Let \(t_0 < \dots < t_n\) be the discretization \(\pi\) and
    \[
        \pi_m \coloneq \set{t_i, \dots, t_j} = \set{u_0, \dots, u_m},
        \qquad
        \pi_1 \coloneq \set{t_i, t_j}
    \]
    be two sub-partitions of the discretization \(\pi\) with
    \[
        [\pi_m] = \set{[t_k, t_{k+1}]: i \le k < j},
        \qquad\text{and}\qquad
        [\pi_1] = \set{[t_i, t_j]}.
    \]
    We define the notation
    \[
        S_{\pi} A \coloneq \sum_{[t,s]\in [\pi]} A_{t, s},
    \]
    such that the quantity we want to bound is given by
    \[
        S_{\pi_m}A - S_{\pi_1} A
        = \sum_{k=i}^{j-1} A_{k, k+1} - A_{i,j}
        = \sum_{k=i}^{j-1} z_k (g_{t_{k+1}} - g_{t_k}) - z_i (g_{t_j} - g_{t_i}).
    \]
    We will now construct \(\pi_{m-1}, \dots, \pi_{2}\) by successively dropping
    one \(t_l\) from the partition. Specifically, since
    \[
        (m-1) \min_{0<k<m} (u_{k+1} - u_{k-1}) \le \sum_{0<k<m} (u_{k+1} - u_{k-1}) \le 2(u_m - u_0) = 2(t_j - t_i),
    \]
    we have for \(l \coloneq \argmin_{0<k<m} (u_{k+1} - u_{k-1})\) that
    \begin{equation}
        \label{eq: bound on partition gap}    
        u_{l+1} - u_{l-1} \le \tfrac{2(t_j - t_i)}{m-1}.
    \end{equation}
    We then define \(\pi_{m-1} = \pi_m \setminus \set{u_l}\) and thus have
    \[
        S_{\pi_m} A - S_{\pi_{m-1}} A
        = A_{u_{l-1}, u_l} + A_{u_l, u_{l+1}} - A_{u_{l-1}, u_{l+1}}
        = -\delta A_{u_{l-1}, u_l, u_{l+1}}.
    \]
    With Lemma \ref{lem: addition defect} we thus obtain
    \begin{align}
        \abs{S_{\pi_m} A - S_{\pi_{m-1}} A}
        &= \abs{\delta A_{u_{l-1}, u_l, u_{l+1}}}
        \\
        \overset{\text{Lemma \ref{lem: addition defect}}}&\le
        K_{\delta A}^R (1+ \norm{x^\pi}_{\alpha,[i:j]})\abs{u_{l+1} - u_{l-1}}^{\alpha+\beta}
        \\
        \overset{\eqref{eq: bound on partition gap}}&\le
        2^{\alpha+\beta} K_{\delta A}^R (1+ \norm{x^\pi}_{\alpha,[i:j]})\frac{\abs{t_j- t_i}^{\alpha+\beta}}{(m-1)^{\alpha+\beta}}.
    \end{align}
    Iterating this argument we get
    \begin{align}
        \abs{S_{\pi_m} A - S_{\pi_1} A}
        &\le \sum_{k=1}^{m-1} \abs{S_{\pi_{k+1}} A - S_{\pi_k} A}
        \\
        &\le 2^{\alpha+\beta} K_{\delta A}^R (1+ \norm{x^\pi}_{\alpha,[i:j]})\abs{t_j- t_i}^{\alpha+\beta} \sum_{k=1}^{m-1} \frac{1}{k^{\alpha+\beta}}
        \\
        &\le K_{\mathrm{sew}}^R (1+ \norm{x^\pi}_{\alpha,[i:j]})\abs{t_j- t_i}^{\alpha+\beta}
    \end{align}
    with \(K_{\mathrm{sew}}^R \coloneq \sum_{k=1}^\infty \bigl(\frac{2}{k}\bigr)^{\alpha+\beta}K_{\delta A}^R \).
\end{proof}

What is left to prove is the bound on the addition defect.

\begin{lemma}[Addition defect bound]
    \label{lem: addition defect}
    The addition defect is given by
    \[
        \delta A_{i,k,j} = (z_i - z_k)(g_{t_j} - g_{t_k})
    \]
    and if \(\sigma\) satisfies Assumption \ref{assmpt: sufficiently nice function} 
    and \(\norm{w}_\alpha, \holder{g}_\beta \le R\), then
    \[
        \abs{\delta A_{i,k,j}}
        \le K_{\delta A}^R(1+\norm{x^\pi}_{\alpha,[i:j]})\abs{t_j - t_i}^{\alpha+\beta}
        \qquad \forall 0\le i < k < j \le n.
    \]
    with \(\norm{x^\pi}_{\alpha,[i:j]} = \norm{x^\pi}_{\infty, [i:j]} + \holder{x^\pi}_{\alpha,[i:j]}\) where
    \[
        \holder{x^\pi}_{\alpha,[i:j]} = \sup_{i\le k<l \le j} \frac{\abs{x^\pi_l - x^\pi_k}}{\abs{t_l - t_k}^\alpha}
        \qquad \text{and} \qquad \norm{x^\pi}_{\infty, [i:j]} = \max_{i\le k \le j} \abs{x^\pi_k}
    \]
    and \(K_{\delta A}^R \coloneq 2K_c^R(1+R) R\).
\end{lemma}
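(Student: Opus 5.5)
The identity for the addition defect is a direct algebraic computation. By definition \(A_{i,j}=z_i(g_{t_j}-g_{t_i})\), so
\[
    \delta A_{i,k,j}
    = z_i(g_{t_j}-g_{t_i}) - z_i(g_{t_k}-g_{t_i}) - z_k(g_{t_j}-g_{t_k})
    = z_i(g_{t_j}-g_{t_k}) - z_k(g_{t_j}-g_{t_k})
    = (z_i-z_k)(g_{t_j}-g_{t_k}).
\]
The second factor is controlled immediately by the Hölder seminorm of the driver:
\[
    \abs{g_{t_j}-g_{t_k}}\le \holder{g}_\beta\abs{t_j-t_k}^\beta\le R\abs{t_j-t_i}^\beta,
\]
using \(t_i<t_k<t_j\).

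The remaining task is to show \(\abs{z_i-z_k}\le 2K_c^R(1+\norm{x^\pi}_{\alpha,[i:j]})\abs{t_j-t_i}^\alpha\). I plan to split \(z_i-z_k=\sigma(t_i,w_{t_i},x^\pi_i)-\sigma(t_k,w_{t_k},x^\pi_k)\) via the triangle inequality into a time increment and a joint \((w,x)\) increment:
\[
    \abs{\sigma(t_i,w_{t_i},x^\pi_i)-\sigma(t_k,w_{t_i},x^\pi_i)}
    + \abs{\sigma(t_k,w_{t_i},x^\pi_i)-\sigma(t_k,w_{t_k},x^\pi_k)}.
\]

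By Assumption~\ref{assmpt: sufficiently nice function}~\ref{it: (nice function) locally Hölder continuous}, the first term is at most \(\mathsf c(\abs{w_{t_i}})(1+\abs{x^\pi_i})\abs{t_k-t_i}^\alpha\). Since \(\abs{w_{t_i}}\le\norm{w}_\alpha\le R\), this is bounded by \(K_c^R(1+\norm{x^\pi}_{\infty,[i:j]})\abs{t_j-t_i}^\alpha\).

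By Assumption~\ref{assmpt: sufficiently nice function}~\ref{it: (nice function) locally Lipschitz}, the second term is at most
\[
    K_c^R\bigl(\abs{x^\pi_i-x^\pi_k}+(1+2\norm{x^\pi}_{\infty,[i:j]})\abs{w_{t_i}-w_{t_k}}\bigr).
\]
Here I insert the discrete Hölder bound \(\abs{x^\pi_i-x^\pi_k}\le\holder{x^\pi}_{\alpha,[i:j]}\abs{t_k-t_i}^\alpha\) together with \(\abs{w_{t_i}-w_{t_k}}\le R\abs{t_k-t_i}^\alpha\).

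Collecting the terms, \(\abs{z_i-z_k}\) is at most \(K_c^R\abs{t_j-t_i}^\alpha\) times
\[
    1+\norm{x^\pi}_\infty+\holder{x^\pi}_\alpha+R(1+2\norm{x^\pi}_\infty)\le 2(1+R)(1+\norm{x^\pi}_{\alpha,[i:j]}),
\]
where all norms are taken on \([i:j]\). The only care needed is this bookkeeping with \(R\), checking that the crude inequality \((1+R)+(1+2R)\norm{x^\pi}_\infty+\holder{x^\pi}_\alpha\le 2(1+R)(1+\norm{x^\pi}_\alpha)\) holds. It does, since \(1+2R\le 2(1+R)\) and \(1+R\le 2(1+R)\).

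Multiplying the bound on \(\abs{z_i-z_k}\) by the driver bound \(R\abs{t_j-t_i}^\beta\) gives the claimed constant \(K_{\delta A}^R=2K_c^R(1+R)R\). No step is a genuine obstacle; the argument is direct.
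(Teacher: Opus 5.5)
Your proof is correct and follows essentially the same route as the paper. You use the same algebraic identity, the same triangle-inequality split of \(z_i-z_k\) into a time increment and a joint \((w,x)\) increment controlled by \(K_c^R\), and the same bookkeeping to reach \(K_{\delta A}^R=2K_c^R(1+R)R\). The only cosmetic difference is that the paper first bounds the discrete \(\alpha\)-H\"older seminorm of \(z\) on \([i:j]\), whereas you bound the single increment \(\abs{z_i-z_k}\) directly.
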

\begin{proof}
    The first claim follows directly from the definition
    \begin{align}
        \delta A_{i,k,j}
        &= A_{i,j} - A_{i,k} - A_{k,j}
        \\
        &= z_i (g_{t_j} - \cancel{g_{t_i}}) - z_i (g_{t_k} - \cancel{g_{t_i}}) - z_k (g_{t_j} - g_{t_k})
        \\
        &= (z_i - z_k)(g_{t_j} - g_{t_k}).
    \end{align}
    We thus obtain with \(\holder{g}_\beta \le R\) that
    \[
        \abs{\delta A_{i,k,j}}
        \le \holder{z}_{\alpha,[i:j]} \holder{g}_{\beta} \abs{t_k - t_i}^\alpha \abs{t_j - t_k}^\beta
        \le \holder{z}_{\alpha,[i:j]} R \abs{t_j - t_i}^{\alpha+\beta}.
    \]
    Now for \(i\le k < l \le j\) we have
    \begin{align}
        \abs{z_l - z_k}
        &= \abs[\Big]{\sigma(t_l, w_{t_l}, x^\pi_l) - \sigma(t_k, w_{t_k}, x^\pi_k)}
        \\
        &\le \abs{\sigma(t_l, w_{t_l}, x^\pi_l) - \sigma(t_k, w_{t_l}, x^\pi_l)}
        + \abs{\sigma(t_k, w_{t_l}, x^\pi_l) - \sigma(t_k, w_{t_k}, x^\pi_k)}
        \\
        \overset{\text{Assmpt.~\ref{assmpt: sufficiently nice function}}}&\le K_c^R(1+ \abs{x^\pi_l})\abs{t_l - t_k}^\alpha
        + K_c^R
        \Bigl(
            \abs{x^\pi_l - x^\pi_k}
            + (1+\abs{x^\pi_k}+\abs{x^\pi_l})
            \abs{w_{t_l} - w_{t_k}}
        \Bigr)
        \\
        &\le K_c^R\Bigl(\bigl(1+ \abs{x^\pi_l}+\abs{x^\pi_k}\bigr)(1+ \holder{w}_\alpha)+\holder{x^\pi}_{\alpha,[i:j]}\Bigr)\abs{t_l - t_k}^\alpha
        \\
        &\le 2K_c^R(1+R)\bigl(1+ \underbrace{\norm{x^\pi}_{\infty,[i:j]}+\holder{x^\pi}_{\alpha,[i:j]}}_{= \norm{x^\pi}_{\alpha,[i:j]}}\bigr)\abs{t_l - t_k}^\alpha
    \end{align}
    using \(\holder{w}_\alpha \le R\). We thus have
    \(\holder{z}_{\alpha,[i:j]} \le 2K_c^R(1+R)(1+ \norm{x^\pi}_{\alpha,[i:j]})\) and by definition of \(K_{\delta A}^R\) the claim follows.
\end{proof}
 
\subsection*{Acknowledgements}

The experiments presented in this paper were carried
out using the HPC facilities of the University of Luxembourg
~\citep{VCPKVO_HPCCT22} {\small -- see \url{https://hpc.uni.lu}}. Research supported by the Luxembourg National Research Fund
(Grants: O24/18972745/GFRF and O22/17372844/FraMStA).

\bibliographystyle{abbrvnat}
\bibliography{pDOM,zotero-generated}

\end{document}